\newtheorem{theorem}{Theorem}[section]
\newtheorem{lemma}[theorem]{Lemma}
\newtheorem{proposition}[theorem]{Proposition}
\newtheorem{corollary}[theorem]{Corollary}
\theoremstyle{definition}
\newtheorem{definition}[theorem]{Definition}
\newtheorem{example}[theorem]{Example}
\theoremstyle{remark}
\newtheorem{remark}[theorem]{Remark}
\numberwithin{equation}{section}
\newcommand{\Z}{\mathbb{Z}}
\newcommand{\Q}{\mathbb{Q}}
\newcommand{\R}{\mathcal{R}}
\newcommand{\C}{\mathcal{C}}
\newcommand{\D}{\mathfrak{D}}
\newcommand{\I}{\mathcal{I}}
\newcommand{\J}{\mathcal{J}}
\newcommand{\M}{\mathcal{M}}
\newcommand{\g}{\mathfrak{g}}
\renewcommand{\ss}{\mathfrak{s}}
\newcommand{\A}{\mathcal{A}}
\renewcommand{\H}{\mathcal{H}}
\newcommand{\Ztilde}{\widetilde{Z}}
\newcommand{\N}{\overrightarrow{N}}
\newcommand{\Fix}{\operatorname{Fix}}
\newcommand{\rank}{\operatorname{rank}}
\renewcommand{\Im}{\operatorname{Im}}
\newcommand{\id}{\mathrm{id}}
\newcommand{\Ker}{\operatorname{Ker}}
\newcommand{\Aut}{\operatorname{Aut}}
\newcommand{\sgn}{\operatorname{sgn}}
\newcommand{\Int}{\operatorname{Int}}
\newcommand{\rev}{\mathrm{rev}}
\newcommand{\lss}{\tilde{\ss}}
\newcommand{\pr}{\mathrm{pr}}
\newcommand{\tor}{\operatorname{tor}}
\newcommand{\AS}{\mathrm{AS}}
\newcommand{\ideg}{\operatorname{i-deg}}
\newcommand{\bu}{\mathfrak{bu}}
\newcommand{\bd}{\mathfrak{bd}}
\newcommand{\mh}{\mathfrak{mh}}
\newcommand{\mht}{\mathfrak{mht}}
\newcommand{\Id}{\mathrm{Id}}
\newcommand{\ang}[1]{\langle#1\rangle}
\newcommand{\Ang}[1]{\left\langle#1\right\rangle}
\newcommand{\Ygraph}[4]{%
  \begin{tikzpicture}[scale=#1, baseline={(0,-0.1)}, densely dashed]
    \coordinate (origin) at (0,0);
    \draw (origin) -- (-1,1) node[at end, anchor=south east] {$#2$};
    \draw (origin) -- (1,1) node[at end, anchor=south west] {$#3$};
    \draw (origin) -- (0,-1) node[at end, anchor=north] {#4};
  \end{tikzpicture}%
}
\newcommand{\buY}[4]{%
  \begin{tikzpicture}[scale=#1, baseline={(0,-0.3)}, densely dashed]
    \coordinate (origin) at (0,0);
    \draw (origin) -- (150:2) node[at end, anchor=south east] {$#2$} ;
    \draw (origin) -- (30:2) node[at end, anchor=south west] {$#3$};
    \draw (origin) -- (270:2) node[at end, anchor=north] {$#4$};
    \draw[fill=white] (origin) circle [radius=0.8];
  \end{tikzpicture}%
}
\newcommand{\Hgraph}[4]{%
\begin{tikzpicture}[scale=0.5, baseline={(0,-0.1)}, densely dashed]
  \draw (-1,1) -- (-1,-1) node[at start, anchor=south] {$#1$} node[at end, anchor=north] {$#3$};
  \draw (1,1) -- (1,-1) node[at start, anchor=south] {$#2$} node[at end, anchor=north] {$#4$};
  \draw (-1,0) -- (1,0) ;
\end{tikzpicture}%
}
\newcommand{\Igraph}[4]{%
\begin{tikzpicture}[scale=0.5, baseline={(0,-0.1)}, densely dashed]
  \draw (-1,1) -- (1,1) node[at start, anchor=south] {$#1$} node[at end, anchor=south] {$#2$};
  \draw (-1,-1) -- (1,-1) node[at start, anchor=north] {$#3$} node[at end, anchor=north] {$#4$};
  \draw (0,1) -- (0,-1) ;
\end{tikzpicture}%
}
\newcommand{\twist}{%
\begin{tikzpicture}[scale=0.1, baseline={(0,-0.1)}]
 \draw (45:0.5) .. controls +(1,1) and +(0,1) .. (2,0);
 \draw (2,0) .. controls +(0,-1) and +(1,-1) .. (-45:0.5);
 \draw (-45:0.5) -- (135:0.5);
 \draw (135:0.5) .. controls +(-1,1) and +(0,1) .. (-2,0);
 \draw (-2,0) .. controls +(0,-1) and +(-1,-1) .. (-135:0.5);
\end{tikzpicture}%
}
\newcommand{\vtwist}{\rotatebox{90}{\hspace{-0.35em}\twist}}
\begin{document}

\title[On the kernel of the surgery map]{On the kernel of the surgery map restricted to the 1-loop part}

\author{Yuta Nozaki}
\address{
Graduate School of Advanced Science and Engineering, Hiroshima University \\
1-3-1 Kagamiyama, Higashi-Hiroshima City, Hiroshima, 739-8526 \\
Japan}
\email{nozakiy@hiroshima-u.ac.jp}

\author{Masatoshi Sato}
\address{
Department of Mathematics \\
Tokyo Denki University \\
5 Senjuasahi-cho, Adachi-ku, Tokyo 120-8551 \\
Japan}
\email{msato@mail.dendai.ac.jp}

\author{Masaaki Suzuki}
\address{Department of Frontier Media Science, Meiji University \\
4-21-1 Nakano, Nakano-ku, Tokyo, 164-8525 \\
Japan}
\email{mackysuzuki@meiji.ac.jp}

\subjclass[2020]{Primary 57K16, 57K31, Secondary 57K20}
\keywords{Homology cylinder, Jacobi diagram, surgery map, clasper, LMO functor}

\maketitle

\begin{abstract}
Every homology cylinder is obtained from Jacobi diagrams by clasper surgery.
The surgery map $\ss \colon \A_n^c \to Y_n\I\C_{g,1}/Y_{n+1}$ is surjective for $n\geq 2$, and its kernel is closely related to the symmetry of Jacobi diagrams.
We determine the kernel of $\ss$ restricted to the 1-loop part after taking a certain quotient of the target.
Also, we introduce refined versions of the AS and STU relations among claspers and study the abelian group $Y_n\I\C_{g,1}/Y_{n+2}$ for $n\geq 2$.
\end{abstract}

\setcounter{tocdepth}{1}
\tableofcontents

\section{Introduction}
\label{sec:Intro}
Let $\Sigma_{g,1}$ be a compact oriented surface of genus $g$ with one boundary component.
In the study of the mapping class group $\M_{g,1}$ of $\Sigma_{g,1}$, the Torelli group $\I=\I_{g,1}$ is one of central interest.
Here $\I=\I_{g,1}$ is defined to be the kernel of the natural homomorphism $\M_{g,1} \to \Aut H_1(\Sigma_{g,1};\Z)$.
One way to investigate $\I=\I_{g,1}$ is by considering its lower central series $\{\I(n)\}_{n\geq 1}$ defined by $\I(1)=\I$ and $\I(n+1)=[\I(n),\I]$ inductively.
Goussarov~\cite{Gou99} and Habiro~\cite{Hab00C} initiated the study of the monoid $\I\C=\I\C_{g,1}$ of homology cylinders over $\Sigma_{g,1}$, which can be regarded as a 3-dimensional analogue of $\I$.
Here a homology cylinder is roughly a 3-manifold that is homologically the product of $\Sigma_{g,1}$ and the closed interval $[-1,1]$.
In their papers, clasper calculus in 3-manifolds was introduced.
A clasper is a certain surface embedded in a 3-manifold, and one obtains another 3-manifold by surgery along the clasper.
Using claspers, they introduced the $Y_n$-equivalence among 3-manifolds for any positive integer $n$ and the $Y$-filtration $\{Y_n\I\C\}_{n\geq 1}$ on $\I\C$ corresponding to $\{\I(n)\}_{n\geq 1}$.
More precisely, we have an injective monoid homomorphism $\I \hookrightarrow \I\C$ which maps $\I(n)$ into $Y_n\I\C$.
Furthermore, the map induces a homomorphism $\bigoplus_{n\geq 1} \I(n)/\I(n+1) \to \bigoplus_{n\geq 1} Y_n\I\C/Y_{n+1}$ between abelian groups.
Thus, determining $Y_n\I\C/Y_{n+1}$ is significant to understand $\I(n)/\I(n+1)$ well.
In fact, the authors~\cite{NSS21} introduced an invariant $\bar{z}_{n+1}\colon Y_n\I\C/Y_{n+1} \to \A_{n+1}^c\otimes\Q/\Z$ and extracted new information about $\I(n)/\I(n+1)$ which is not detected by the Johnson homomorphisms (\cite[Theorem~1.2 and Corollary~1.4]{NSS21}).
Furthermore, by combining \cite[Theorem~1.2]{NSS21} with \cite[Theorem~1.2]{MSS20}, we conclude that the torsion subgroup $\tor(\I(n)/\I(n+1))$ is non-trivial when $n=3,5$ and $g$ is in a stable range.
This is a generalization of the fact that $\tor(\I(1)/\I(2)) \neq 0$.
For more details of the groups $\I(n)/\I(n+1)$ and $Y_n\I\C/Y_{n+1}$ we refer the reader to \cite[Section~1]{NSS21}.

Let $\A_n^c$ denote the $\Z$-module generated by connected Jacobi diagrams with $n$ trivalent vertices subject to the AS, IHX, and self-loop relations (to be defined later).
Here a Jacobi diagram is a uni-trivalent graph with additional information.
We can define the surgery map $\ss \colon \A_n^c \to Y_n\I\C/Y_{n+1}$ by realizing a Jacobi diagram $J$ as a clasper in the trivial homology cylinder $\Sigma_{g,1}\times[-1,1]$, and $\ss$ is surjective (\cite[Section~8.5]{Hab00C}) when $n\geq 2$.
Hence, we focus on the kernel of $\ss$.
Since $\ss\otimes\id_\Q$ is known to be an isomorphism (\cite[Theorem~7.11]{CHM08}), we conclude $\Ker\ss \subset \tor\A_n^c$.
To investigate $\tor\A_n^c$, we decompose $\A_n^c$ into the direct sum $\bigoplus_{l\geq 0}\A_{n,l}^c$, where $\A_{n,l}^c$ denotes the submodule generated by Jacobi diagrams whose first Betti numbers are $l$.
We write $\ss_{n,l}$ for the restriction of $\ss$ to the $l$-loop part $\A_{n,l}^c$.
The 0-loop part $\A_{n,0}^c$ was deeply studied by Conant, Schneiderman, and Teichner~\cite{CST12L, CST12W, CST16}, and it was proved that $\tor\A_{n,0}^c$ is generated by symmetric Jacobi diagrams of a particular form when $n$ is odd.
They applied their results to the study of the homology cobordism group $\I\H=\I\H_{g,1}$ of homology cylinders, which is the group obtained from $\I\C$ by identifying homology cylinders that are homology cobordant.
In the study of $\I\H$, one can ignore claspers having loops, namely the composite map $\A_{n,l}^c \xrightarrow{\ss_{n,l}} Y_n\I\C/Y_{n+1} \twoheadrightarrow Y_n\I\H/Y_{n+1}$ is trivial if $l\geq 1$ due to Levine~\cite{Lev01}, where $Y_n\I\H$ denotes the image of $Y_n\I\C$.
On the other hand, $\ss_{n,l}$ itself is non-trivial in general, and it is hard to determine $\Ker\ss_{n,l}$.

\begin{figure}[h]
 \centering
\begin{tikzpicture}[scale=0.3, baseline={(0,0)}, densely dashed]
 \draw (0,0) circle [radius=2];
 \draw (160:2) -- (160:3) node[anchor=east] {$a_1$};
 \draw (-160:2) -- (-160:3) node[anchor=east] {$a_1$};
 \draw (40:2) -- (40:3) node[anchor=west] {$a_{m-1}$};
 \draw (-40:2) -- (-40:3) node[anchor=west] {$a_{m-1}$};
 \draw (0:2) -- (0:3) node[anchor=west] {$a_{m}$};
 \node at (0,3) {$\cdots$};
 \node at (0,-3) {$\cdots$};
\end{tikzpicture}
 \quad
\begin{tikzpicture}[scale=0.3, baseline={(0,0)}, densely dashed]
 \draw (0,0) circle [radius=2];
 \draw (180:2) -- (180:3) node[anchor=east] {$a_1$};
 \draw (140:2) -- (140:3) node[anchor=east] {$a_2$};
 \draw (-140:2) -- (-140:3) node[anchor=east] {$a_2$};
 \draw (20:2) -- (20:3) node[anchor=west] {$a_{m}$};
 \draw (-20:2) -- (-20:3) node[anchor=west] {$a_{m}$};
 \node at (0,3) {$\cdots$};
 \node at (0,-3) {$\cdots$};
\end{tikzpicture}
 \caption{A pair of two symmetric 1-loop Jacobi diagrams.}
 \label{fig:sym_1loop}
\end{figure}

In this paper, we shall focus on the case $l=1$ and determine $\Ker\ss_{n,1}$ after taking a certain quotient of $Y_n\I\C/Y_{n+1}$.
First, we have to know the structure of the module $\tor\A_{n,1}^c$, which was done in \cite{NSS21}.
When $n$ is odd, $\tor\A_{n,1}^c$ is generated by symmetric 1-loop Jacobi diagrams illustrated in Figure~\ref{fig:sym_1loop}, and we prove that certain pairs of such diagrams are in the kernel.
To mention a rigorous statement, we introduce some modules and a quotient map.
For $n \geq 2$, let $\ang{\Theta_n^{\geq 1}}$ denote the submodule of $\A_{n,2}^{c}$ generated by 2-loop Jacobi diagrams of the form
\newcommand{\thetapqr}{
\begin{tikzpicture}[scale=0.3, baseline={(0,-0.2)}, densely dashed]
 \draw (0,0) circle [radius=3];
 \draw (-2.8,-1) -- (2.8,-1);
 \draw (135:3) -- (135:4) node[anchor=south] {$a_1$};
 \node at (0,4) {$\cdots$};
 \draw (45:3) -- (45:4) node[anchor=south] {$a_p$};
 \draw (-1.5,-1) -- (-1.5,0) node[anchor=south] {$b_1$};
 \node at (0,0) {$\cdots$};
 \draw (1.5,-1) -- (1.5,0) node[anchor=south] {$b_q$};
 \draw (-135:3) -- (-135:4) node[anchor=north] {$c_1$};
 \node at (0,-4) {$\cdots$};
 \draw (-45:3) -- (-45:4) node[anchor=north] {$c_r$};
\end{tikzpicture}
}
\begin{align}
\label{eq:theta_intro}
\thetapqr\ ,
\end{align}
where $p,q,r \geq 1$ satisfy $p+q+r+2=n$, and $a_i, b_i, c_i \in \{1^{+},\dots,g^{+},1^{-},\dots,g^{-}\}$.
Similarly, we write $\ang{\Theta_n^{\geq 1, s}}$ for the submodule of $\A_{n,2}^{c}$ generated by the above 2-loop Jacobi diagrams with additional condition $a_i=a_{p-i+1}$, $b_i=b_{q-i+1}$, and $c_i=c_{r-i+1}$.
Now, let $\pi$ be the quotient map 
\[
\pi\colon Y_n\I\C/Y_{n+1} \to (Y_n\I\C/Y_{n+1})/\ss(\ang{\Theta_n^{\geq 1,s}}).
\]
Then, in Section~\ref{sec:Ker_OneLoop}, we prove the following main result.

\begin{theorem}
\label{thm:Ker_sn1}
Let $m \geq 2$ be an integer.
Then $\Ker(\pi\circ\ss_{2m-1,1})$ is a free $\Z/2\Z$-module of rank $\frac{1}{2}((2g)^m-(2g)^{\lceil m/2 \rceil})$, generated by the sums of pairs drawn in Figure~\textup{\ref{fig:sym_1loop}}.
\end{theorem}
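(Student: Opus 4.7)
The plan is to leverage the description of $\tor\A_{2m-1,1}^c$ from \cite{NSS21} and then use clasper calculus to identify the kernel. First, I observe that $\Ker(\pi\circ\ss_{2m-1,1})\subset\tor\A_{2m-1,1}^c$: if $\pi\circ\ss(x)=0$ for $x\in\A_{2m-1,1}^c$, then $\ss(x)=\ss(y)$ for some $y\in\ang{\Theta_{2m-1}^{\geq 1,s}}\subset\A_{2m-1,2}^c$. Since $\ss\otimes\id_\Q$ is an isomorphism by \cite[Theorem~7.11]{CHM08} and the 1-loop and 2-loop summands of $\A_{2m-1}^c\otimes\Q$ are independent, both $x\otimes 1$ and $y\otimes 1$ must vanish rationally, so $x$ is torsion. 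By \cite{NSS21}, this torsion module is a $\Z/2$-module spanned by the symmetric 1-loop diagrams drawn in Figure~\ref{fig:sym_1loop}, giving a manageable generating set to work with. (Freeness as a $\Z/2$-module for the kernel is automatic once it sits inside this $\mathbb{F}_2$-vector space.)

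The heart of the proof is to show that each pair sum lies in the kernel. Write $D_1(a_1,\ldots,a_m)$ and $D_2(a_1,\ldots,a_m)$ for the two diagrams of Figure~\ref{fig:sym_1loop}. The strategy is to realize them as concrete claspers in $\Sigma_{g,1}\times[-1,1]$ and prove that their surgery effects differ, modulo $Y_{2m}$, by the surgery effect of an element of $\ang{\Theta_{2m-1}^{\geq 1,s}}$. Concretely, starting from $D_1$, I would apply a sequence of local moves at the central vertex of the wheel, using in particular the refined AS/STU relations among claspers introduced earlier in the paper, so as to transform $D_1$ into $D_2$ at the cost of a 2-loop term. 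The main obstacle is to verify that this 2-loop term satisfies the palindromic conditions $a_i=a_{p-i+1}$, $b_i=b_{q-i+1}$, $c_i=c_{r-i+1}$ defining $\ang{\Theta_{2m-1}^{\geq 1,s}}$; this requires propagating the reflection symmetry of $D_1,D_2$ carefully through each local clasper manipulation.

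For the reverse inclusion, I would use the 1-loop invariants from \cite{NSS21} (in particular the restriction of $\bar{z}_{2m}$ to the 1-loop part) to separate the symmetric generators and show that no combination of them besides the pair sums identified above can lie in the kernel. The rank count is then combinatorial: symmetric 1-loop diagrams of each type are parameterized by tuples $(a_1,\ldots,a_m)\in\{1^{+},\ldots,g^{+},1^{-},\ldots,g^{-}\}^m$, giving $(2g)^m$ diagrams per type. The natural involution induced by the diagram's reflection symmetry acts on these tuples by reversal, with exactly $(2g)^{\lceil m/2\rceil}$ fixed points (palindromes $a_i=a_{m-i+1}$). For palindromic tuples, the extra order-two automorphism makes the pair sum $D_1+D_2$ vanish already in $\A_{2m-1,1}^c$ via the AS relation; the remaining non-fixed orbits, of size $2$, each contribute one independent $\Z/2$-generator, totalling $\tfrac{1}{2}((2g)^m-(2g)^{\lceil m/2\rceil})$ as claimed.
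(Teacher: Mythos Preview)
Your overall architecture matches the paper's: first confine the kernel to $\tor\A_{2m-1,1}^c$, then show the pair sums lie in $\Ker(\pi\circ\ss)$ via the refined clasper relations (this is exactly Corollary~\ref{cor:1LoopRel}), and finally bound the kernel from above using $\bar z_{2m}$. Your orbit count under reversal is also correct and coincides with the paper's count via necklaces with arrow.

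There is, however, a genuine gap in the reverse inclusion when $m$ is even. You propose to use only the $1$-loop part $\bar z_{2m,1}$ to show that nothing beyond the reversal-pair sums lies in the kernel. For $m$ odd this works: under the identifications $\ang{B_{2m-1}^s}\cong(\Z/2\Z)\N_{2m}''$ and $\ang{B_{2m}^s}\cong\Z(\N_{2m}/\iota)$, the map $\bar z_{2m,1}\circ\ss$ becomes the tautological map and its kernel is exactly the span of $\mh(x)+\mh(\iota(x))$ with $x,\iota(x)\in\N_{2m}''$; when $m$ is odd these are precisely the reversal pairs. But when $m$ is even there exist $x\in\N_{2m}''$ with $e(x)\geq 1$ (periodic necklaces such as $O(w,\bar w,\ldots,w,\bar w\uparrow w,\bar w,\ldots,w,\bar w)$) for which $\iota(x)\in\N_{2m}''$ as well, and the corresponding elements $\mh(x)+\mh(\iota(x))$ lie in $\Ker(\bar z_{2m,1}\circ\ss)$ yet are \emph{not} reversal-pair sums in your sense. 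So $\bar z_{2m,1}$ alone gives a strictly weaker upper bound than needed.

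The paper closes this gap by bringing in the $2$-loop part: it forms $f=\bd\circ((\bar z_{2m,2}\circ\ss)\otimes 2)$ with values in $\A_{2m-2,1}^c\otimes\Z/2\Z$, checks that $f$ factors through $\pi$ (because $\delta'$ of an element of $\ang{\Theta_{2m-1}^{\geq 1,s}}$ lands in $\ang{\Theta_{2m}^{\geq 1}}$, which is killed by $\bd$ --- this is where Theorem~\ref{thm:bu_isom} is used), and then verifies that $f$ separates the periodic elements $\mh(x)+\mh(\iota(x))$ with $e(x)\geq 1$. Without this additional $2$-loop step your argument does not establish the claimed rank for even $m$.
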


Theorem~\ref{thm:Ker_sn1} gives an upper bound on the order of the module $\Ker \ss_{2m-1,1}$.
Moreover, since $\Ker(\pi\circ\ss_{2m-1,1})$ coincides with $\Ker \ss_{2m-1,1}$ when $m=2,3$ (see Remark~\ref{rem:TrueKer}), Theorem~\ref{thm:Ker_sn1} determines $\Ker \ss_{3,1}$ and $\Ker \ss_{5,1}$.
The proof is divided into two different arguments.
Roughly speaking, one is the non-triviality of $\pi\circ\ss_{2m-1,1}(J)$, where $J$ is a Jacobi diagram drawn in Figure~\ref{fig:sym_1loop}.
This is shown by the invariant $\bar{z}_{2m}$ in Section~\ref{sec:Ker_OneLoop}.
The other is to check that $\pi\circ\ss_{2m-1,1}(J+J')=0$ for the pair of diagrams $J$, $J'$ in Figure~\ref{fig:sym_1loop} (see Corollary~\ref{cor:1LoopRel}).
To show it, we introduce refined versions of the surgery map and of the AS and STU relations among claspers.
That is, we develop clasper calculus in $Y_n\I\C/Y_{n+2}$ for $n \geq 2$.
This quotient is also an abelian group and fits into an exact sequence
\[
0 \to Y_{n+1}\I\C/Y_{n+2} \to Y_n\I\C/Y_{n+2} \to Y_n\I\C/Y_{n+1} \to 0.
\]
When $n=2$, this sequence splits since $Y_2\I\C/Y_3$ is torsion-free (\cite{MaMe13}), and thus we know the structure of $Y_2\I\C/Y_{4}$ by \cite[Theorem~1.7]{NSS21}.
While $Y_n\I\C/Y_{n+2}$ is important in the study of the $Y$-filtration, its structure is still unknown for $n\geq 3$.
As a consequence of our argument, we obtain the following result in Section~\ref{subsec:App_of_refined}.

\begin{theorem}
\label{thm:Y3C/Y5}
The abelian group $Y_3\I\C/Y_5$ is torsion-free.
\end{theorem}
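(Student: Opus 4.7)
My plan is to exploit the natural short exact sequence
\[
0 \to Y_4\I\C/Y_5 \to Y_3\I\C/Y_5 \to Y_3\I\C/Y_4 \to 0
\]
coming from the $Y$-filtration. The outer groups are \emph{not} expected to be torsion-free: the invariant $\bar z_{n+1}$ of \cite{NSS21} detects torsion already in $Y_n\I\C/Y_{n+1}$, and the nontriviality of $\tor(\I(3)/\I(4))$ mentioned in the introduction forces $Y_3\I\C/Y_4$ itself to carry torsion in a stable range. Hence the argument must control the extension rather than just the two outer terms.

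A torsion class $y \in Y_3\I\C/Y_5$ with $ky = 0$ projects to a torsion class $\bar y \in Y_3\I\C/Y_4$, so torsion-freeness of $Y_3\I\C/Y_5$ reduces to \emph{both} (i) $Y_4\I\C/Y_5$ is torsion-free (to cover the case $\bar y = 0$) and (ii) no nonzero torsion class $\bar y \in Y_3\I\C/Y_4$ lifts to a torsion class of $Y_3\I\C/Y_5$. Using surjectivity of $\ss_n$ and $\Ker \ss_n \subseteq \tor \A_n^c$, both assertions are tested on generators of $\tor \A_3^c$ and $\tor \A_4^c$, which are (in the relevant loop parts) the symmetric Jacobi diagrams identified by Conant-Schneiderman-Teichner in the 0-loop part and by Theorem~\ref{thm:Ker_sn1} in the 1-loop part. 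In particular, for $n=3$ (i.e.\ $m=2$) the condition $p+q+r+2 = 3$ with $p,q,r \geq 1$ has no solutions, so $\langle \Theta_3^{\geq 1, s}\rangle = 0$ vacuously, $\pi$ is the identity, and $\Ker \ss_{3,1}$ is determined exactly.

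The refined AS and STU relations, developed earlier in the paper as identities in $Y_n\I\C/Y_{n+2}$ rather than merely modulo $Y_{n+1}$, should give explicit formulas witnessing that each symmetric torsion generator of $\A_4^c$ lies in $\Ker \ss_4$ (settling (i)), and that for each symmetric $2$-torsion generator $J$ of $\A_3^c$ the doubled lift $2\,\tilde\ss(J) \in Y_4\I\C/Y_5$ is nonzero whenever $\ss_3(J) \ne 0$ (settling (ii), since such nonvanishing forces any lift of $\ss_3(J)$ to have infinite order in $Y_3\I\C/Y_5$). In both cases the final nonvanishing check uses the invariant $\bar z_5\colon Y_4\I\C/Y_5 \to \A_5^c \otimes \Q/\Z$.

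The main obstacle is the explicit refined-clasper-calculus computation: rewriting doubled or paired clasper surgeries modulo $Y_5$ using the refined AS and STU moves is combinatorially intricate, and one must then evaluate $\bar z_5$ on the resulting sums of higher-loop surgery images to confirm nonvanishing. This is where the refined framework of Section~\ref{subsec:App_of_refined} delivers its main payoff.
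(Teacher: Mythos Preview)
Your overall framework is the paper's: use the exact sequence
\[
0 \to Y_4\I\C/Y_5 \to Y_3\I\C/Y_5 \to Y_3\I\C/Y_4 \to 0,
\]
reduce to (i) $Y_4\I\C/Y_5$ torsion-free and (ii) no nonzero torsion class of $Y_3\I\C/Y_4$ lifting to torsion, and feed the refined clasper formulas of Corollary~\ref{cor:2torsion} into the second step. But both halves of your endgame have gaps.

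For (i) you plan to show that the symmetric torsion generators of $\A_4^c$ lie in $\Ker\ss_4$. In fact the paper shows there are no such generators: Proposition~\ref{prop:A4} proves $\A_4^c$ is itself torsion-free by computing each loop part ($\A_{4,0}^c\cong D_4'$, $\A_{4,1}^c$ torsion-free by \cite[Proposition~5.2]{NSS21}, $\A_{4,2}^c\cong S^2(H)$ via Lemma~\ref{lem:bu_small}, $\A_{4,3}^c\cong\Z$). This immediately gives $\ss\colon\A_4^c\xrightarrow{\cong}Y_4\I\C/Y_5$, which is the tool actually used in (ii), not~$\bar z_5$.

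For (ii) your criterion is too weak. Showing $2\,\tilde\ss(\tilde J)\ne 0$ for each generator $J$ individually does \emph{not} prevent a nontrivial $\Z/2\Z$-combination $\bar y=\sum c_i\,\ss(x_i)$ from lifting to torsion: that happens precisely when $\sum c_i\, x_i'\in 2\A_4^c$, where $\ss(x_i')=2\,\tilde\ss(\tilde x_i)$ via Example~\ref{ex:deg3}/Corollary~\ref{cor:2torsion}. What must be checked is that the whole set $\{x_i'\}$ is linearly independent in $\A_4^c\otimes\Z/2\Z$, i.e.\ extends to a $\Z$-basis of $\A_4^c$. The paper does this by isolating, in each $x'$, a distinguished term (the ``three pairs of identical labels'' tree, or the $1$-loop diagram $O(a,b,b,a)$, etc.) and verifying that these distinguished terms themselves extend to a basis --- using the weight system of Example~\ref{ex:A40} for the $0$-loop part and \cite[Proposition~5.2]{NSS21} for the $1$-loop part. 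The invariant $\bar z_5$ is not suited to this: you need an isomorphism onto $\A_4^c$, not a map to $\A_5^c\otimes\Q/\Z$, to test independence modulo $2$.
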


Theorem~\ref{thm:Y3C/Y5} determines $Y_3\I\C/Y_{5}$ (see also Remark~\ref{rem:Y3C/Y5}).
In particular, the above exact sequence does not split when $n=3$.
Furthermore, we can also apply our framework to the homology cobordism group $\I\H$, and determine $Y_3\I\H/Y_{5}$ in Theorem~\ref{thm:Y3H/Y5}.

\begin{figure}[h]
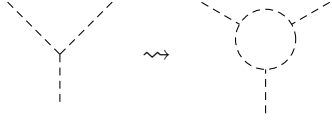

 \centering
 \Ygraph{0.7}{}{}{} $\rightsquigarrow$ \buY{0.5}{}{}{}
 \caption{Process of blowing up a trivalent vertex.}
 \label{fig:bu}
\end{figure}

Additionally, in Section~\ref{sec:Jacobi_diagrams}, we investigate a useful relation between the 1-loop part $\A_{n,1}^c$ and the 2-loop part $\A_{n,2}^c$ to prove Theorem~\ref{thm:Ker_sn1}.
Precisely, we focus on the map $\bu \colon \A_{n,l}^c \to \A_{n+2,l+1}^c$ defined by blowing up a trivalent vertex of a Jacobi diagram as in Figure~\ref{fig:bu}.

\begin{theorem}
\label{thm:bu_isom}
The map $\bu$ induces an isomorphism $\bu \colon \A_{n-2,1}^c \to \A_{n,2}^c/\ang{\Theta_n^{\geq 1}}$ for $n\geq 3$.
\end{theorem}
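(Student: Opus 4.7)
The plan is to construct an inverse $\bd\colon \A_{n,2}^c/\ang{\Theta_n^{\geq 1}} \to \A_{n-2,1}^c$ to the blow-up map $\bu$ and verify they are mutually inverse.

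First I would check that $\bu$ descends to a well-defined $\Z$-module homomorphism. Under the local operation of Figure~\ref{fig:bu}, the AS relation and the self-loop relation on $\A_{n-2,1}^c$ are handled by direct inspection. The IHX relation is the nontrivial case: an IHX triple at a vertex of a $1$-loop diagram blows up to three $2$-loop diagrams whose triangles share a vertex, and the required identity in $\A_{n,2}^c$ comes from applying IHX at that triangle vertex, combined with AS.

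Next I construct $\bd$. Using the standard consequence of IHX that every connected $2$-loop Jacobi diagram reduces to a $\Z$-linear combination of theta diagrams $\Theta(p,q,r)$ with $p+q+r = n-2$, only those with $\min(p,q,r)=0$ survive in the quotient $\A_{n,2}^c/\ang{\Theta_n^{\geq 1}}$. On such a theta, define $\bd$ by deleting the legless arc and smoothing the two resulting $2$-valent vertices; the output is a cycle with $n-2$ legs, i.e.\ an element of $\A_{n-2,1}^c$.

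The main obstacle is showing $\bd$ is well-defined. One must verify (i) that every IHX relation among $2$-loop diagrams is sent by $\bd$ to an AS or IHX relation (or zero) in $\A_{n-2,1}^c$, and (ii) that when several arcs of the theta are simultaneously legless the choice of arc to delete does not matter; the latter follows from AS-symmetry of theta diagrams, while the former reduces to a finite case analysis of IHX configurations meeting the theta skeleton. Once this is settled, the identities $\bd\circ\bu = \id$ and $\bu\circ\bd = \id$ are routine from Figure~\ref{fig:bu}: blowing up a cycle vertex produces a theta whose unique legless arc is precisely the new triangle chord, and deleting that chord recovers the original cycle.
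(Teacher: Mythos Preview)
Your plan follows the paper's approach: construct an inverse $\bd$ by collapsing a legless arc of a theta diagram. There are, however, two genuine gaps.

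\textbf{The bubble case.} Your recipe ``delete the legless arc and smooth'' is incomplete when \emph{two} arcs of the theta are legless, i.e.\ for $\theta(a_1,\dots,a_{n-2};\,;\,)$. You say the choice of arc does not matter, which is true, and either choice gives $O(a_1,\dots,a_{n-2})$. But the correct value is $2\,O(a_1,\dots,a_{n-2})$: inserting a bubble into an edge equals $2\bu$ (this is the remark right after the definition of $\bu$), so $\theta(a_1,\dots,a_{n-2};\,;\,)=2\,\bu(O(a_1,\dots,a_{n-2}))$ in the quotient. With your definition, $\bu\circ\bd$ would send this theta to $\bu(O(\dots))$, not to itself, and $\bu\circ\bd=\id$ fails over~$\Z$. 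The paper's explicit formula for $\bd$ builds in this factor of~$2$.

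\textbf{Relations among theta diagrams.} You invoke the ``standard consequence of IHX'' that $2$-loop diagrams are generated by thetas, and then propose to check that $\bd$ respects ``every IHX relation among $2$-loop diagrams''. But IHX relations in $\A_{n,2}^c$ pass through eyeglass diagrams (e.g.\ the relator $r_1$ in the paper), on which you have not defined $\bd$. What you actually need is that all relations among theta diagrams in $\A_{n,2}^c$ are already generated by AS and IHX \emph{within} theta diagrams. This is the content of Proposition~\ref{prop:eyeglass_to_theta}, which the paper proves by exhibiting an explicit retraction $f\colon \Z\J_{n,2}^c/\AS \to \Z\Theta_n/\R_\Theta$ via shuffle sums and checking it kills IHX and self-loop relators. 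Without this step (or an equivalent argument), defining $\bd$ only on theta generators and checking only theta-internal IHX is insufficient to produce a well-defined map on $\A_{n,2}^c/\ang{\Theta_n^{\geq 1}}$.
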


As an application of Theorem~\ref{thm:bu_isom}, we solve the Goussarov-Habiro conjecture for the $Y_5$-equivalence in Corollary~\ref{cor:GHC}.
This conjecture asserts that the $Y_n$-equivalence among homology cylinders is characterized by finite type invariants of degree at most $n-1$, which is one of the fundamental problems in quantum topology.
The cases $n=2,3$ are known to be true (see \cite[Section~2.3]{MaMe13}), and the case $n=4$ was solved in \cite{NSS21} as a consequence of the determination of the group $Y_3\I\C/Y_{4}$.

Finally, we also discuss higher loop parts $\ss_{n,l}$ in Section~\ref{sec:Ker_HigherLoop}.
In contrast to the 1-loop part, we give lower bounds on the ranks of the $\Z/2\Z$-modules $\Ker(\ss|_{\tor\A_{2k+1,k}^c})$ and $\Im(\ss|_{\tor\A_{2k+1,k}^c})$ for $k\geq 0$.
See Theorem~\ref{thm:HigherLoop} for the precise statement.
We use clasper calculus to study the kernel of $\ss_{2k+1,k}$.
On the other hand, the image of $\ss_{2k+1,k}$ is investigated by the invariant $\bar{z}_{2k+2}$ introduced in \cite{NSS21}.
As a byproduct of the proof of Theorem~\ref{thm:HigherLoop}, we conclude that the $(k+1)$-loop part $\bar{z}_{2k+2,k+1}$ of $\bar{z}_{2k+2}$ is non-trivial in Corollary~\ref{cor:buTaba}.
Here we emphasize that its restriction to the torsion subgroup $\tor(Y_{2k+1}\I\C/Y_{2k+2})$ is still non-trivial and does not factor through the $(2k+1)$-st Johnson homomorphism.
Note that $\bar{z}_{2k,0}$ and $\bar{z}_{2k,1}$ are already known to be non-trivial (\cite[Corollary~1.4]{NSS21}).

\subsection*{Acknowledgments}
This study was supported in part by JSPS KAKENHI Grant Numbers JP20K14317, JP20H05795, JP18K03310, JP20K03596, and JP19H01785.

\section{Preliminaries}
\label{sec:preliminaries}
We review homology cylinders, Jacobi diagrams, and the invariant $\bar{z}_{n+1}$ of homology cylinders.

\subsection{Homology cylinders}
\label{subsec:homology_cylinders}
Let $M$ be a connected oriented compact 3-manifold, and let $m\colon \partial(\Sigma_{g,1}\times[-1,1]) \to \partial M$ be an orientation-preserving homeomorphism.
Two pairs $(M,m)$ and $(M',m')$ are said to be equivalent if there is an orientation-preserving homeomorphism $f\colon M \to M'$ satisfying $f\circ m = m'$.
Let $m_\pm$ denotes the restriction of $m$ to $\Sigma_{g,1}\times\{\pm 1\}$.
A \emph{homology cylinder} over $\Sigma_{g,1}$ is an equivalence class of a pair $(M,m)$ such that $m_{+}$ and $m_{-}$ induce the same isomorphism $H_\ast(\Sigma_{g,1};\Z) \to H_\ast(M;\Z)$.
We define the composition of $M=(M,m)$ and $N=(N,n)$ by stacking $N$ on $M$, that is, $M\circ N = (M\cup_{m_{+}=n_{-}}N, m_{-}\cup n_{+})$.
Then the set $\I\C=\I\C_{g,1}$ of homology cylinders over $\Sigma_{g,1}$ is a monoid.

In the study of the monoid $\I\C$, Goussarov~\cite{Gou99} and Habiro~\cite{Hab00C} introduced claspers.
Roughly speaking, a clasper in a 3-manifold is an embedded surface consisting of disks, bands, and annuli.
Claspers allow us to introduce the $Y_n$-equivalence among homology cylinders and the submonoid $Y_n\I\C$ of $\I\C$.
Also, the descending series $\I\C=Y_1\I\C \supset Y_2\I\C \supset \cdots$ is called the \emph{$Y$-filtration}.
For the definitions of these terminologies, we refer the reader to \cite{Hab00C} or \cite[Sections~2.3 and 2.4]{NSS21}.
It is known that the quotient set $Y_n\I\C/Y_{k}$ is a finitely generated abelian group if $k\leq 2n$.

We sometimes express a homology cylinder by a $2g$-component oriented tangle such as Example~\ref{ex:refined_surgery}.
In fact, there is a bijection between $\I\C$ and the set of bottom-top tangles in a homology cube (\cite[Theorem~2.10]{CHM08}).
The components of a bottom-top tangle correspond to the set $\{1^{+},\dots,g^{+},1^{-},\dots,g^{-}\}$.
Throughout this paper, we fix a symplectic basis $\{\alpha_1,\dots,\alpha_g,\beta_1,\dots,\beta_g\}$ of $H=H_1(\Sigma_{g,1};\Z)$, and identify $H$ with the module $\Z\{1^\pm,\dots,g^\pm\}$ according to $\alpha_i \leftrightarrow i^{-}$ and $\beta_i \leftrightarrow i^{+}$.

\subsection{Jacobi diagrams}
\label{subsec:Jacobi_diagrams}
A \emph{Jacobi diagram} is a uni-trivalent graph such that each trivalent vertex has a cyclic order and each univalent vertex is labeled by an element of the set $\{1^{+},\dots,g^{+},1^{-},\dots,g^{-}\}$.
We define the \emph{internal degree} $\ideg J$ of a Jacobi diagram $J$ to be the number of trivalent vertices of $J$.
Let $\A_n$ (resp.\ $\A_n^c$) denote the $\Z$-module generated by Jacobi diagrams (resp.\ connected Jacobi diagrams) of $\ideg =n$ subject to the AS, IHX, and self-loop relations:
\[
\Ygraph{0.5}{}{}{}
+
\begin{tikzpicture}[scale=0.5, baseline={(0,-0.1)}, densely dashed]
  \coordinate (origin) at (0,0);
  \draw (origin) .. controls +(1,0.5) and +(1,-0.5) .. (-1,1) node[at end, anchor=south east] {};
  \draw (origin) .. controls +(-1,0.5) and +(-1,-0.5) .. (1,1) node[at end, anchor=south west] {};
  \draw (origin) -- (0,-1) node[at end, anchor=north] {};
\end{tikzpicture}%
=0,
\quad
\Igraph{}{}{}{} - \Hgraph{}{}{}{} +
\begin{tikzpicture}[scale=0.5, baseline={(0,-0.1)}, densely dashed]
  \draw (-1,1) -- (1,-1);
  \draw (-1,-1) -- (1,1);
  \draw (-0.5,-0.5) -- (0.5,-0.5);
\end{tikzpicture}%
=0,
\quad
\begin{tikzpicture}[scale=0.5, baseline={(0,-0.1)}, densely dashed]
  \draw (0,0) circle [radius=1];
  \draw (1,0) -- (2,0) node {};
\end{tikzpicture}%
=0,
\]
where the rest of the diagrams are the same in each relation.
Note that the IHX relation implies the self-loop relation among connected Jacobi diagrams of $\ideg\geq 2$.

We obtain a homology cylinder from a Jacobi diagram by clasper surgery.
In fact, there is a homomorphism $\ss \colon \A_n^c \to Y_n\I\C/Y_{n+1}$, which is called the \emph{surgery map}.
We explain the details in Section~\ref{sec:refined}, and introduce refined versions of $\A_n$ and $\ss$.

\subsection{Maps $\bar{z}_{n+1}$ and $\delta$}
\label{subsec:NSS_invariant}
Cheptea, Habiro and Massuyeau~\cite{CHM08} introduced the LMO functor, which is a functorial extension of the LMO invariant of closed 3-manifolds.
The LMO functor induces a homomorphism $Y_n\I\C/Y_{n+1} \to \A_n^c \otimes \Q$, which is known to be an isomorphism over $\Q$ and the surgery map $\ss$ induces its inverse up to sign (\cite[Theorem~7.11]{CHM08}).
Using the LMO functor, the authors~\cite[Section~4]{NSS21} introduced a homomorphism $\bar{z}_{n+1}$ satisfying the commutative diagram
\[\xymatrix{
\A_n^c \ar[r]^-{\ss} \ar[d]_-{\delta} & Y_n\I\C/Y_{n+1} \ar[d]^-{\bar{z}_{n+1}} \\
\A_{n+1}^c\otimes\Z/2\Z \ar[r]^-{\id\otimes\frac{1}{2}} & \A_{n+1}^c\otimes\Q/\Z .
}\]
Here the homomorphism $\delta \colon \A_n^c \to \A_{n+1}^c\otimes\Z/2\Z$ is defined to be the sum of two maps $\delta'$ and $\delta''$ (\cite[Section~3.1]{NSS21}).
For the argument in Section~\ref{sec:refined}, we here redefine $\delta'$ and $\delta''$ as maps between modules of Jacobi diagrams without the AS, IHX, and self-loop relations.
We write $\J_{n}^c$ for the set of connected Jacobi diagrams $J$ of $\ideg(J)=n$.
Let $U(J)$ denote the set of univalent vertices of $J$ and let $\ell(v)$ be the label of $v \in U(J)$.
We temporarily fix a total order $\prec$ on each set $\{v \in U(J) \mid \ell(v)=i^\epsilon\}$, where $i^\epsilon \in \{1^\pm,\dots,g^\pm\}$.

\begin{definition}\label{def:delta}
Let $J \in \J_{n}^c$.
For $v \in U(J)$, let $\delta_v(J)$ denote the sum of two Jacobi diagrams
\[
\begin{tikzpicture}[scale=0.3, baseline={(0,-0.1)}, densely dashed]
 \draw (120:2) .. controls +(-1,0) and +(0,2) .. (180:4);
 \draw (-120:2) .. controls +(-1,0) and +(0,-2) .. (180:4);
 \fill[fill=gray!50] (0,0) circle [radius=2];
 \draw (-3.8,1) -- (-5,1) node[anchor=east] {$\ell(v)$};
 \draw (-3.8,-1) -- (-5,-1) node[anchor=east] {$\ell(v)$};
\end{tikzpicture}\ 
+
\begin{tikzpicture}[scale=0.3, baseline={(0,-0.1)}, densely dashed]
 \draw (120:2) .. controls +(-1,0) and +(0,2) .. (180:4);
 \draw (-120:2) .. controls +(-1,0) and +(0,-2) .. (180:4);
 \fill[fill=gray!50] (0,0) circle [radius=2];
 \draw (-4,0) -- (-5,0);
 \draw (-5,0) -- (-6,1) node[anchor=east] {$\ell(v)^\ast$};
 \draw (-5,0) -- (-6,-1) node[anchor=east] {$\ell(v)$};
\end{tikzpicture}
\quad
\Bigl(J=
\begin{tikzpicture}[scale=0.3, baseline={(0,-0.1)}, densely dashed]
 \draw (120:2) .. controls +(-1,0) and +(0,2) .. (180:4);
 \draw (-120:2) .. controls +(-1,0) and +(0,-2) .. (180:4);
 \fill[fill=gray!50] (0,0) circle [radius=2];
 \draw (-5,0) -- (-4,0) node[at start, anchor=east] {$\ell(v)$};
\end{tikzpicture}\ 
\Bigr),
\]
where $(i^\pm)^\ast = i^\mp$, and the shaded regions represent the same diagram.
Then $\delta' \colon \Z\J_{n}^c \to \Z\J_{n+1}^c$ is defined by $\delta'(J) = \sum_{v \in U(J)} \delta_v(J)$.
Next, for $v \prec w \in U(J)$, let $\delta_{vw}(J)$ denote the Jacobi diagram
\[
\begin{tikzpicture}[scale=0.3, baseline={(0,-0.1)}, densely dashed]
 \draw (120:2) .. controls +(-1,0) and +(0,2) .. (180:4);
 \draw (-120:2) .. controls +(-1,0) and +(0,-2) .. (180:4);
 \fill[fill=gray!50] (0,0) circle [radius=2];
 \draw (-5,0) -- (-4,0) node[at start, anchor=east] {$\ell(v)$};
\end{tikzpicture}
\quad
\Bigl(J=
\begin{tikzpicture}[scale=0.3, baseline={(0,-0.1)}, densely dashed]
 \draw (120:2) -- ($(120:2)+(-2,0)$) node[anchor=east] {$\ell(v)$};
 \draw (-120:2) -- ($(-120:2)+(-2,0)$) node[anchor=east] {$\ell(w)$};
 \fill[fill=gray!50] (0,0) circle [radius=2];
\end{tikzpicture}\ 
\Bigr),
\]
and $\delta'' \colon \Z\J_{n}^c \to \Z\J_{n+1}^c$ is defined by $\delta''(J) = \sum_{v \prec w} \delta_{vw}(J)$.
Note that $v \prec w \in U(J)$ implies $\ell(v)=\ell(w)$.
\end{definition}

The above $\delta'$ and $\delta''$ induce the homomorphisms $\delta', \delta'' \colon \A_n^c \to \A_{n+1}^c\otimes\Z/2\Z$ introduced in \cite[Section~4]{NSS21}.
Note that the induced map $\delta''$ is independent of the choice of the total order.

Finally note that the subscripts of the map $\ss_{n,l}$ and $\bar{z}_{n+1,k}$ are both based on information about Jacobi diagrams, that is, the subscript of the $\ss_{n,l}$ (resp.\ $\bar{z}_{n+1,k}$) is the same as its domain (resp.\ codomain).

\section{Refined surgery map and refined relations}
\label{sec:refined}
In this section, we give refinements of the surgery map and the AS and STU relations among claspers to introduce some relations in $Y_n\I\C/Y_{n+2}$ for $n\ge 2$.

\subsection{Refined surgery map}
Recall that the surgery map $\ss\colon \A_n^c\to Y_n\I\C/Y_{n+1}$ is a homomorphism defined in terms of clasper surgeries corresponding to Jacobi diagrams labeled by $\{1^{\pm},\ldots, g^{\pm}\}$ in the trivial homology cylinder.
See \cite[Theorem~8.8]{CHM08}, \cite[Section~8.5]{Hab00C}, and \cite[Theorem~4.13]{GGP01} for details. 
Here, we lift the target of the surgery map $\ss\colon \A_n^c\to Y_n\I\C/Y_{n+1}$ to $Y_n\I\C/Y_{n+2}$ when $n\ge2$ to obtain relations of $Y_n\I\C/Y_{n+2}$.

The ordinary surgery map $\ss$ is defined as follows.
For a connected Jacobi diagram $J$ labeled by $\{1^{\pm},2^{\pm},\ldots,g^{\pm}\}$,
we assign disks and annuli called nodes and leaves to trivalent and univalent vertices in $J$, respectively.
By gluing them to bands corresponding to edges, we obtain a compact surface. 
To embed the surface into $\Sigma_{g,1}\times [-1,1]$,
fix an orientation of the surface,
and identify $\Sigma_{g,1}\times [-1,1]$ with the bottom-top tangle $\Id_g$ depicted in \cite[Figure~2.6]{CHM08}.
First, we embed each leaf along a meridian of a component of the tangle which represents the label of the corresponding univalent vertex.
We assume that the annulus is vertical to the tangle,
and that the orientation coincides with that of a fiber of the normal bundle of the tangle.
Second, we embed the nodes in an arbitrary way,
and also embed the bands in the complement of $\Id_g$ in $[-1,1]^3$ so that the orientations of the constituents are compatible.
In this way, we obtain a graph clasper in $\Sigma_{g,1}\times [-1,1]$,
and we denote its $Y_{n+1}$-equivalence class by $\ss(J)$.
Here, we call a connected clasper without boxes a \emph{graph clasper}.

To lift the target of the surgery map, we consider Jacobi diagrams with labels that have information on the relative positions of leaves with respect to the orientation of the tangle and of half-twists on edges.

\begin{definition}\label{def:jacobi}
Let $J$ be a connected Jacobi diagram whose univalent vertices are labeled by the set
\[
\mathcal{L}=\{1_j^{\pm},\ldots, g_j^{\pm}, \bar{1}_j^{\pm},\ldots, \bar{g}_j^{\pm}\mid j\in\Z_{\ge1}\}
\]
satisfying the condition that, for each $i^{\epsilon}\in \{1^\pm,\ldots,g^\pm\}$,
$i_j^\epsilon$ or $\bar{i}_j^{\epsilon}$ appears as a label in $J$ exactly once for $j=1,2,\ldots, n(i^\epsilon)$ for some $n(i^\epsilon)\ge0$.
Let us denote by $\widetilde{\J}_n^c$ the set of such connected Jacobi diagrams $J$ with $\ideg(J)=n$.
As in the ordinary surgery map,
in the label $i_j^\epsilon$ or $\bar{i}_j^\epsilon$ for $1\le i\le g$ and $\epsilon=\pm$,
the symbol $i^\epsilon$ corresponds to a component of the bottom-top tangle $\Id_g$ in the cube.
The subscript $j$ of $i_j^\epsilon$ or $\bar{i}_j^\epsilon$ indicates the relative position of a leaf among the leaves with respect to the orientation of the component corresponding to the label $i^\epsilon$,
and the difference between the symbols $i_j^{\epsilon}$ and $\bar{i}_j^{\epsilon}$ is related to a positive half-twist on the edge incident to a leaf when realizing as a graph clasper.
\end{definition}

\begin{remark}
In \cite{Hab00C}, a module $\A_n(\Sigma_{g,1})$ of Jacobi diagrams is defined, which is isomorphic to $\A_n^c$.
Habiro used a total order on univalent vertices to denote the relative positions of leaves of a graph clasper.
Instead, we adopt subscripts of labels to denote the relative positions of leaves.
\end{remark}

We define the \emph{refined surgery map}
\[
\tilde{\ss}\colon\Z\widetilde{\J}_n^c\to Y_n\I\C/Y_{n+2}
\]
in a similar way as the ordinary surgery map by embedding the compact oriented surface $S$ corresponding to a connected Jacobi diagram.
The first difference is that we choose some standard homotopy classes of edges when we embed $S$ into the complement of the bottom-top tangle $\Id_g$ in $[-1,1]^3$ as follows.
First, we embed the $n(i^+)$ (resp.\ $n(i^-)$) copies of annuli corresponding to the univalent vertices labeled by $i_j^+$ and $\bar{i}_j^+$ (resp.\ $i_j^-$  and $\bar{i}_j^-$) in a small neighborhood of the terminal point of the component $i^+$ (resp.\ the initial point of the component $i^-$) of the tangle for each $i=1,2,\ldots,g$.
We assume that the positions of leaves are arranged in order of the subscripts of the labels with respect to the orientation of the tangle.
Second, we set disks as nodes in the subspace $\Int([-1,1]\times [-1,-1+\delta]\times [-1,1])$ of the cube, where $\delta>0$ is sufficiently small.
Corresponding to each edge in the Jacobi diagram incident to two trivalent vertices,
we connect two nodes by a band in $\Int([-1,1]\times [-1,-1+\delta]\times [-1,1])$ so that the orientations are compatible.
Lastly, to embed the rest of the edges,
we draw a line which is parallel to the second coordinate axis from each leaf to the plane $[-1,1]\times\{-1+\delta\}\times [-1,1]$,
and we connect the end points of lines in the plane to the corresponding nodes by arcs in $(-1,1)\times (-1,-1+\delta]\times (-1,1)$.
By fattening the lines and arcs to bands so that the orientations are compatible with the nodes and leaves,
we obtain embedded edges.

The second difference is that,
if the label of a univalent vertex is of the form $\bar{i}_j^\epsilon$,
we apply a positive half-twist when we connect the band to the corresponding leaf.

For elements $a_1,a_2,\ldots, a_n$ in $\mathcal{L}$ satisfying the condition in Definition~\ref{def:jacobi}
(resp.\ in $\{1^{\pm},2^{\pm},\ldots,g^{\pm}\}$),
let us denote
\[
T(a_1,a_2,\ldots, a_n)=
\begin{tikzpicture}[baseline=1.3ex, scale=0.25, dash pattern={on 2pt off 1pt}]
\node [left] at (0,0){$a_1$};
\node [above] at (2,2){$a_2$};
\node [above] at (4,2){$a_3$};
\node [below] at (6,2){$\cdots$};
\node [below] at (8.3,2){$\cdot$};
\node [below] at (9,2){$\cdot$};
\node [above] at (10,2){$a_{n-1}$};
\node [right] at (12,0){$a_n$,};
\draw (0,0) -- (12,0);
\draw (2,0) -- (2,2);
\draw (4,0) -- (4,2);
\draw (10,0) -- (10,2);
\end{tikzpicture}
\]
which is an element in $\Z\widetilde{\J}_{n-2}^c$ (resp.\ in $\A_{n-2}^c$).

\begin{example}\label{ex:refined_surgery}
The image of $T(1_1^-, 1_2^+, 1_1^+, \bar{2}_1^-)\in\Z\widetilde{\J}_2^c$ under the refined surgery map $\tilde{\ss}\colon\Z\widetilde{\J}_2^c\to Y_2\I\C_{2,1}/Y_4$ is
\[
\lss(T(1_1^-, 1_2^+, 1_1^+, \bar{2}_1^-))=\raisebox{-1.5cm}{\includegraphics[height=3cm]{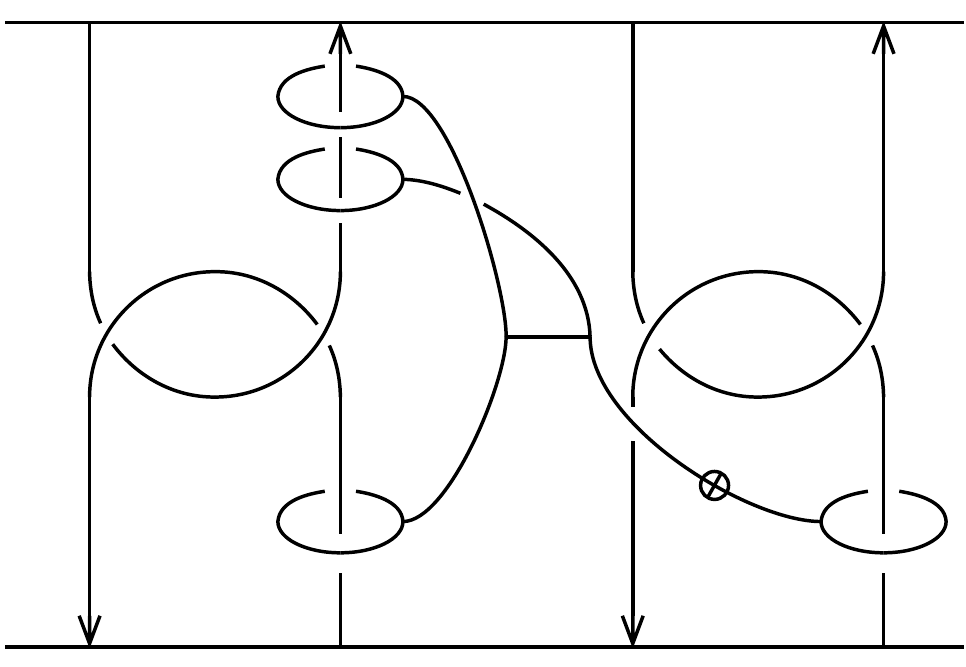}}\in Y_2\I\C_{2,1}/Y_4,
\]
where $\oplus$ in the figure denotes a positive half-twist.
\end{example}

\begin{lemma}\label{lem:refinedsurgery}
For $n\ge2$,
the refined surgery map $\tilde{\ss}\colon\Z\widetilde{\J}_n^c\to Y_n\I\C/Y_{n+2}$ is a well-defined homomorphism.
\end{lemma}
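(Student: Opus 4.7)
The plan is to verify well-definedness of $\tilde{\ss}$ on $\Z\widetilde{\J}_n^c$; the homomorphism property is then automatic since the domain is free on $\widetilde{\J}_n^c$. Concretely, what needs to be shown is that any two valid embeddings of the surface $S$ associated to the same $J\in\widetilde{\J}_n^c$ produce graph claspers whose surgeries yield the same class in $Y_n\I\C/Y_{n+2}$.

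First, I would enumerate the residual freedom in the construction. The labels in $\mathcal{L}$ pin down (i) the relative positions of the leaves along each tangle component, via the subscripts, and (ii) the presence of half-twists on the edges incident to each leaf, via the bars. What remains free is the placement of the nodes within the slab $\Int([-1,1]\times[-1,-1+\delta]\times[-1,1])$, the routing of the node-to-node bands in the same slab, the routing of the leaf-to-node arcs in the same slab, and the realization of the cyclic order at each node. Crucially, every remaining ambiguity is supported inside the slab and is hence disjoint from $\Id_g$.

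Next, I would argue that any two valid embeddings are related by a finite sequence of ambient isotopies, which preserve the graph clasper (and so the surgery manifold) exactly, together with finitely many crossing changes between two pieces of the clasper inside the slab. For a single crossing change between two edges of a graph clasper $G$ of $\ideg=n$, Habiro's clasper calculus identifies the correction as surgery along an auxiliary Y-graph $Y$ two of whose three leaves are Hopf-linked to the crossed strands. Using the zip and absorption moves, these two Hopf-linked leaves can be amalgamated into $G$. Since both crossed strands are edges of $G$ itself rather than components of the tangle, each absorption contributes a new trivalent vertex, so the amalgamated clasper has $\ideg\ge n+2$ and the correction lies in $Y_{n+2}\I\C$.

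The main obstacle will be this degree bookkeeping; in particular, I will need to rule out a hidden correction of degree exactly $n+1$. The moves that would produce such a correction are precisely (a) transposing two leaves on the same tangle component, and (b) inserting or removing a half-twist on an edge incident to a leaf, because in either case only one of the Hopf-linked leaves of the auxiliary Y-graph absorbs into $G$, while the other attaches to a tangle component and does not raise the internal degree. Both moves are forbidden by the refined label data: the subscripts fix the order of the leaves along each tangle component, and the bars fix the half-twist data. Hence the labels in $\mathcal{L}$ are calibrated so that every $Y_{n+1}/Y_{n+2}$-level ambiguity is recorded as a distinct element of $\widetilde{\J}_n^c$, while the remaining ambiguities vanish in $Y_n\I\C/Y_{n+2}$, yielding well-definedness.
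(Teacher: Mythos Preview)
Your argument is correct and follows the same strategy as the paper: the refined labels in $\mathcal{L}$ pin down the leaf order and half-twist data (precisely the sources of degree-$(n+1)$ corrections), and the remaining freedom in the embedding amounts to crossing changes between clasper edges inside the contractible slab, which by Habiro's zip construction produce corrections of internal degree at least $n+2$ (the paper phrases this as ``the subtree $T$ in the proof of \cite[Proposition~4.6]{Hab00C} is of degree $n+2$'' rather than via an auxiliary $Y$-graph, but your degree bound is right either way). One ambiguity you do not single out is the full-twist framing on edges, which the paper handles by a separate trick (connected sum of an edge with a $(-1)$-framed unknot, via \cite[Lemmas~A.1 and A.5]{MaMe13}); the paper also supplies a brief argument for the homomorphism property using edge slidings and leaf crossings between two separate graph claspers, though your observation that it follows from freeness of the domain once the target is known to be an abelian group is equally valid.
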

\begin{proof}
Before discussing $\tilde{\ss}$,
we recall that there are three ambiguities when we embed the surface corresponding to a connected Jacobi diagram labeled by $\{1^{\pm},\ldots, g^{\pm}\}$ in the definition of $\ss$;
framings of edges, isotopy classes of edges, and positions of leaves,
where the third ambiguity occurs only when multiple vertices have the same label.
Recall that the ordinary surgery map $\ss\colon\A_n^c\to Y_n\I\C/Y_{n+1}$ is well-defined because
the first ambiguity is eliminated by edge twists (\cite[Lemma~E.7]{Oht02}),
the second one is eliminated by edge slidings (\cite[Lemma~E.5]{Oht02}, \cite[Lemma~A.1]{MaMe13}), 
and the third one is also eliminated by leaf crossings (\cite[Lemma~E.6]{Oht02}).
If we try to lift the target of the surgery map from the $Y_{n+1}$-equivalence classes to the $Y_{n+2}$-equivalence classes,
full-twists along edges are still eliminated using \cite[Lemmas~A.1 and A.5]{MaMe13} with respect to a connected sum of an edge with a trivial knot with $(-1)$-framing.
With respect to the second ambiguity,
we cannot slide an edge of a graph clasper as in \cite[Lemma~A.1]{MaMe13},
although we can still apply a crossing change of edges in a graph clasper up to $Y_{n+2}$-equivalence in the same way as \cite[Proposition~4.6]{Hab00C}.
The proof is almost the same except that the subtree $T$ in the proof of \cite[Proposition~4.6]{Hab00C} is of degree $n+2$ (in our notation) after the zip construction.
Thus, we need to fix the homotopy classes of edges as we did in the refined surgery map.
Because of the third ambiguity,
we assigned the subscript $j$ in the labels of $J$,
and the positions of the leaves attaching to the same component of the tangle are ordered.

By edge slidings (\cite[Lemma~2.6(2)(a)]{Mei06}) and leaf crossings based on \cite[Lemma~2.6(2)(b)]{Mei06} of two graph claspers,
we can also show that $\tilde{\ss}$ is a homomorphism in the same way as $\ss$.
\end{proof}

\begin{remark}
Full twists along edges of a graph clasper of degree $n$ do not change its $Y_{n+2}$-equivalence class.
Thus, it does not matter whether we apply a positive or negative half-twist to an edge in the refined surgery map $\lss$
when the label of a univalent vertex is of the form $\bar{i}^{\epsilon}$.
\end{remark}

Define a map
\[
\varpi\colon \mathcal{L}\to \{1^{\pm},\ldots, g^{\pm}\},
\]
by $i_j^{\pm}\mapsto i^{\pm}$ and $\bar{i}_j^{\pm}\mapsto i^{\pm}$.
We also define a homomorphism
\[
\mathcal{P}\colon \Z\widetilde{\J}_n^c\to\A_n^c
\]
by $\mathcal{P}(J)=(-1)^k\bar{J}$,
where $\bar{J}$ is the Jacobi diagram obtained by changing the labels of the univalent vertices in $J\in \Z\widetilde{\J}_n^c$ by $\varpi$,
and $k$ is the number of univalent vertices in $J$ colored by $\{\bar{1}_j^{\pm},\ldots, \bar{g}_j^{\pm}\mid j\in\Z_{\ge1}\}\subset\mathcal{L}$.
The AS relation in $\A_n^c$ implies that if we apply a half-twist to an edge in a graph clasper representing $\ss(\bar{J})$, then the sign of its $Y_{n+1}$-equivalence class changes.
Thus, we have a commutative diagram
\[
\xymatrix{
\Z\widetilde{\J}_n^c \ar[r]^-{\lss} \ar@{->>}[d]_-{\mathcal{P}} & Y_n\I\C/Y_{n+2} \ar@{->>}[d] \\
\A_n^c \ar[r]^-{\ss} & Y_n\I\C/Y_{n+1}
}\]
for $n\ge2$,
where the right vertical map is the natural projection.

\subsection{Refined AS and refined STU relations}
Here, we introduce the refined AS and refined STU relations,
and introduce some relations in $Y_n\I\C/Y_{n+2}$.

\begin{lemma}\label{lem:AS}
Let $n\ge1$, and let $G$ be a graph clasper with $n$ nodes in $\Sigma_{g,1}\times[-1,1]$.
We denote by $G'$ the graph clasper obtained from $G$ by inserting a positive half-twist in one edge $e$.

If $e$ is incident to a leaf and a node in $G$,
denote by $H$ the graph clasper obtained by doubling a neighborhood of $e$ as below.
Then, we have
\[
(\Sigma_{g,1}\times[-1,1])_G\circ (\Sigma_{g,1}\times[-1,1])_{G'}\circ (\Sigma_{g,1}\times[-1,1])_H
\sim_{Y_{n+2}} \Sigma_{g,1}\times[-1,1].
\]
\begin{center}
\includegraphics[height=2.5cm]{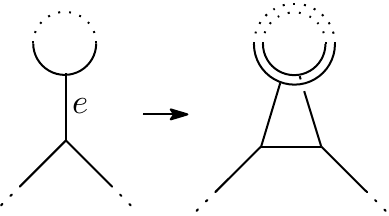}
\end{center}
If $e$ is incident to two nodes in $G$,
we have 
\[
(\Sigma_{g,1}\times[-1,1])_G\circ (\Sigma_{g,1}\times[-1,1])_{G'}
\sim_{Y_{n+2}} \Sigma_{g,1}\times[-1,1].
\]
\end{lemma}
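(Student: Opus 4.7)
The plan is to prove both identities by a local clasper-calculus computation in a regular neighborhood $N$ of the edge $e$.

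First, I would reduce to a local assertion. Since $G$ and $G'$ agree outside $N$, and since stacking $(\Sigma_{g,1}\times[-1,1])_G$ with $(\Sigma_{g,1}\times[-1,1])_{G'}$ is equivalent, up to $Y_{n+2}$-equivalence, to surgery on the disjoint union $G\sqcup G'$ placed in adjacent copies of $\Sigma_{g,1}\times[-1,1]$, I would apply edge sliding from \cite[Lemma~2.6]{Mei06} and the crossing-change formula from \cite[Proposition~4.6]{Hab00C} to bring the portions of $G$ and $G'$ outside $N$ parallel to each other. The two parallel copies, differing only by an ambient isotopy of the surface, cancel modulo graph claspers of internal degree $\geq n+2$, reducing the statement to a purely local identity inside $N$ involving only the edge $e$ and its incident pieces.

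Second, I would invoke the ordinary AS relation at the $Y_{n+1}$ level, namely \cite[Lemma~E.7]{Oht02}, to conclude that the composite is $Y_{n+1}$-equivalent to $\Sigma_{g,1}\times[-1,1]$ in both cases. Thus the residual element lies in $Y_{n+1}\I\C/Y_{n+2}$ and is represented by a connected Jacobi diagram of internal degree $n+1$. The refined assertion amounts to identifying this residual: it is zero in Case~2, and is cancelled by $\lss(H)$ in Case~1.

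The heart of the argument is an explicit model for the AS cancellation. The standard cancellation stacks $G'$ directly above $G$, matching nodes and leaves, and performs a half-twist ambient isotopy along $e$ that collapses the two together. In Case~2, where $e$ connects two nodes, every crossing produced during this isotopy involves an interior band of a graph clasper; resolving each crossing via \cite[Proposition~4.6]{Hab00C} or the edge-twist move of \cite[Lemma~A.5]{MaMe13} introduces a graph clasper of internal degree at least $n+2$, hence trivial modulo $Y_{n+2}$. In Case~1, where $e$ terminates at a leaf, the cancellation additionally requires pushing the doubled leaf past the tangle component carrying its label; the leaf-crossing analysis of \cite[Lemma~E.6]{Oht02} shows that this single crossing produces precisely the doubled graph clasper $H$ described in the statement, accounting for the extra surgery $(\Sigma_{g,1}\times[-1,1])_H$.

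The main obstacle I anticipate is Case~1: one must verify that the output of the doubling move is exactly $H$, with the correct orientation of the doubled band, the correct framing arising from the positive half-twist, and the correct sign so that $(\Sigma_{g,1}\times[-1,1])_H$ (rather than its inverse) appears in the equivalence. This requires working with an explicit local model in $[-1,1]^3$ and carefully tracking how the positive half-twist on $e$ interacts with the leaf-crossing move, along with confirming that no secondary Borromean-type contributions of internal degree $n+1$ appear. Case~2 should be more routine once the local model is in place, since each correction clasper automatically has enough nodes to vanish modulo $Y_{n+2}$.
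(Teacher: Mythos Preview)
Your reduction step contains a genuine gap. You claim that after placing $G$ and $G'$ in adjacent copies and sliding their complements outside $N$ into parallel position, ``the two parallel copies \ldots\ cancel modulo graph claspers of internal degree $\geq n+2$.'' But the parts of $G$ and $G'$ outside $N$ are identical, not inverses of one another; parallel copies of the same graph clasper do not cancel. The cancellation in the AS relation comes from the half-twist on $e$ acting through the \emph{entire} clasper, not from some local collapse that leaves only $N$ to analyze. So the statement does not reduce to a local identity inside $N$ in the way you describe, and the subsequent identification of the residual via a single leaf-crossing is not grounded.

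The paper's argument runs in the opposite direction. Rather than composing $G$ and $G'$ and hunting for the error term, it starts from a clasper that is \emph{trivially} equivalent to the empty one: a local modification of $G$ near $e$ (inserting a box with a twisted input) which Move~4 of \cite{Hab00C} kills. It then rewrites this trivial object, via Move~11, the move in \cite[Figure~38]{Hab00C}, and repeated zip constructions (following \cite[Lemma~A.9]{MaMe13} for the base case), until it separates into $G \sqcup G' \sqcup H$ in Case~1 or $G \sqcup G'$ in Case~2, up to $Y_{n+2}$. In particular, the degree-$(n+1)$ clasper $H$ is not produced by pushing a leaf past the tangle; it is peeled off as a separate component during the zip construction and then identified with the doubled-edge clasper using \cite[Lemma~E.7]{Oht02}. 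The extra boxes and loops generated by the zip construction are removed by Moves~3, 11, 12 and the move in \cite[Figure~38]{Hab00C}, each time producing only claspers of degree $\geq n+2$.
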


\begin{proof}
\newcommand{\hidari}[1]{
\begin{scope}[xshift=#1]
 \draw [dotted] (0,11) -- (1,11); 
 \draw (1,11)--(2,11);
 \draw (2,9) rectangle (3,13);
 \draw (3,12)--(5,12);
 \draw (3,10)--(5,10);
 \draw (5.5,10) arc (0:160:0.5);
 \draw (5.5,10) arc (0:-160:0.5);
 \draw (5,9.5)--(5,8);
 \draw (5,5)--(5,3.5);
 \draw (4.5,3) arc (180:20:0.5);
 \draw (4.5,3) arc (-180:-20:0.5);
 \draw (3,3)--(4.2,3);
 \draw (3,1)--(5,1);
 \draw [dotted] (0,2) -- (1,2);
 \draw (1,2)--(2,2);
 \draw (2,0) rectangle (3,4);
\end{scope}
}
\newcommand{\migi}[1]{
\begin{scope}[xshift=#1]
 \draw (0,12)--(2,12);
 \draw (0,10)--(2,10);
 \draw (2,9) rectangle (3,13);
 \draw (3,11)--(4,11);
 \draw [dotted] (4,11) -- (5,11);
 \draw (0,3)--(2,3);
 \draw (0,1)--(2,1);
 \draw (2,0) rectangle (3,4);
 \draw (3,2)--(4,2);
 \draw [dotted] (4,2) -- (5,2);
 \draw (1,3)--(1,10);
 \draw (1,6.5) circle [radius=0.3];
 \draw (0.7,6.5)--(1.3,6.5);
\end{scope}
}
Assume that $e$ is incident to a leaf and a node.
In \cite[Lemma~A.9]{MaMe13}, the case $n=1$ is treated.
The proof is based on their clasper calculus.
If we change a neighborhood of $e$ in $G$ as below,
it is equivalent to the empty one by Move~4 in \cite{Hab00C}.
\begin{center}
\includegraphics[height=2.7cm]{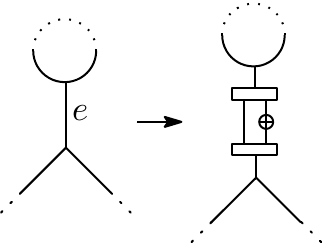}
\end{center}

It is also equivalent to the left-hand side below as in the proof of \cite[Lemma~A.9]{MaMe13},
where $\ominus$ denotes a negative half-twist.
\begin{center}
\includegraphics[height=3.5cm]{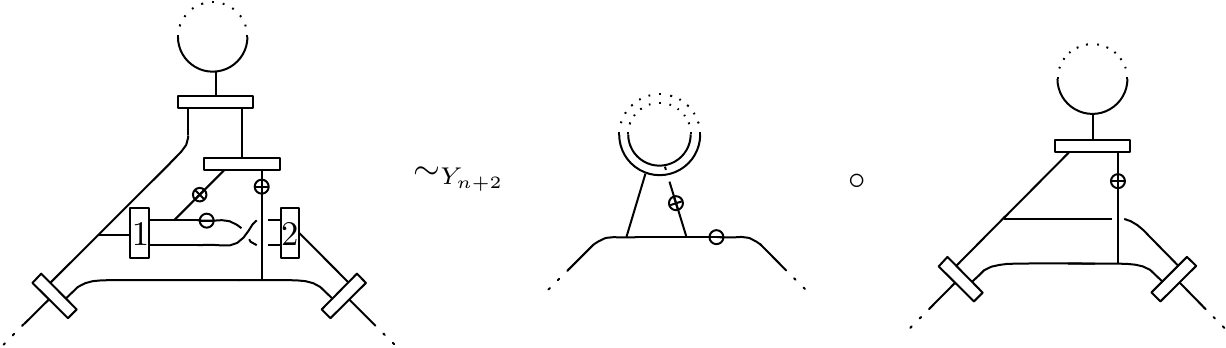}
\end{center}
By the zip construction with the upper input of Box~1 and the lower input of Box~2 as a marking and by applying Move~5 in \cite{Hab00C} several times,
we obtain a graph clasper of degree $n+1$, which is the first one on the right-hand side.
By edge slidings (\cite[Lemma~2.6(2)(a)]{Mei06}) and leaf crossings based on \cite[Lemma~2.6(2)(b)]{Mei06},
we can separate it from the rest up to $Y_{n+2}$-equivalence,
and the corresponding homology cylinder decomposes into a product of two homology cylinders as in the right-hand side.
By \cite[Lemma~E.7 or (E.10)]{Oht02},
the first one is represented by $(\Sigma_{g,1}\times[-1,1])_H$ up to $Y_{n+2}$-equivalence.
In the second homology cylinder,
apply the zip construction with the left input of the lower-left box and the right input of the lower-right box as a marking.
Then, it decomposes into the product $(\Sigma_{g,1}\times[-1,1])_G\circ (\Sigma_{g,1}\times[-1,1])_{G'}$.
Here, to erase extra boxes arising from the zip construction,
we applied Move~12 to the leaf attached to an input of each extra box and the zip construction,
and erased graph claspers of degree $n+2$.
We also used Move~11 on extra boxes to increase the degrees of the graph claspers.
It is also possible to erase extra boxes by applying Move~11, crossing changes (\cite[Lemma~2.6(2)(b)]{Mei06}) of edges and leaves, and Move~3.
If the graph clasper $G$ has loops, there arise pairs of boxes whose outputs are connected by one edge.
We also used the move \cite[Figure~38]{Hab00C} and the zip construction to erase these boxes.

Next, we treat the case where $e$ is incident to two nodes.
In the same way,
consider the clasper obtained by changing a neighborhood of $e$ as on the left-hand side below.
\[
\begin{tikzpicture}[scale=0.25, baseline={(0,1.25)}]
 \draw (1,5.5) circle [radius=0.3];
 \draw (0.7,5.5)--(1.3,5.5);
 \draw (1,7)--(1,4);
 \draw (-1,7)--(-1,4);
 \draw (-2,7) rectangle (2,8);
 \draw (0,8)--(0,10);
 \draw (0,10)--(-2,10);
 \draw [dotted] (-2,10)--(-3,10);
 \draw (0,10)--(2,10);
 \draw [dotted] (2,10)--(3,10);
 \draw (-2,3) rectangle (2,4);
 \draw (0,3)--(0,1);
 \draw (0,1)--(-2,1);
 \draw [dotted] (-2,1)--(-3,1);
 \draw (0,1)--(2,1);
 \draw [dotted] (2,1)--(3,1);
\end{tikzpicture}
\quad\sim\quad
\begin{tikzpicture}[scale=0.25, baseline={(0,1.5)}]
 \hidari{-5cm}
 \migi{3cm}
 \draw (-1,7) rectangle (3,8);
 \draw (-1,5) rectangle (3,6);
 \draw (0,12)--(3,12);
 \draw (0.8,10)--(1.8,10);
 \draw (2.2,10)--(3,10);
 \draw (0,3)--(3,3);
 \draw (0,1)--(3,1);
 \draw (2,12)--(2,8);
 \draw (2,5)--(2,3.2);
 \draw (2,2.8)--(2,1);
 \draw (1,7)--(1,6);
\end{tikzpicture}
\]
Using Move~11 twice, we have the clasper on the right-hand side.
By the move \cite[Figure~38]{Hab00C} and Move~6,
it is equivalent to the clasper on the left-hand side below.
\[
\begin{tikzpicture}[scale=0.25, baseline={(0,1.5)}]
 \hidari{-5cm}
 \migi{5cm}
 \draw (2.5,10) arc (0:160:0.5);
 \draw (2.5,10) arc (0:-160:0.5);
 \draw (0.8,10)--(2,10);
 \draw (0,3)--(1.2,3);
 \draw (1.5,3) arc (180:20:0.5);
 \draw (2,9.5) .. controls +(0,-1) and +(-0.5,1) .. (2.8,6.7);
 \draw (3.2,6) .. controls +(0.1,-0.2) and +(0,0.2) .. (3.5,5);
 \draw (1.5,3) arc (-180:-20:0.5);
 \draw (3.5,8) .. controls +(0,-1) and +(0,1) .. (2,3.5);
 \draw (3,8) rectangle (5,9);
 \node at (4,8.5) {\Small 1};
 \draw (4,9)--(4,12);
 \draw (3,4) rectangle (5,5);
 \node at (4,4.5) {\Small 2};
 \draw (4,4)--(4,3.2);
 \draw (4,2.8)--(4,1);
 \draw (0,12)--(5,12);
 \draw (2.8,10)--(3.8,10);
 \draw (4.2,10)--(5,10);
 \draw (2,3)--(5,3);
 \draw (0,1)--(5,1);
 \draw (0,5)--(0,8);
 \draw (4.5,5)--(4.5,8);
\end{tikzpicture}
\quad\sim_{Y_{n+2}}\quad
\begin{tikzpicture}[scale=0.25, baseline={(0,1.5)}]
 \hidari{-5cm}
 \migi{5cm}
 \draw (0,12)--(5,12);
 \draw (0.8,10)--(3.8,10);
 \draw (4.2,10)--(5,10);
 \draw (0,3)--(5,3);
 \draw (0,1)--(5,1);
 \draw (4,2.8)--(4,1);
 \draw (0,5)--(0,8);
 \draw (4,12)--(4,3.2);
\end{tikzpicture}
\]
By the zip construction with the four edges incident to the twisted edge as a marking,
we obtain a graph clasper of degree $n$ as a subset of the resulting clasper.
Thus, after applying Move~11 to Boxes~1 and 2 so that each of the two leaves attached to the left inputs is incident to a node, 
we can pass horizontal edges in the figure across the two leaves up to $Y_{n+2}$-equivalence by \cite[Lemma~2.6(2)(b)]{Mei06}.
Applying Move~3 to erase Boxes $1$ and $2$ and the same zip construction backward, we obtain the clasper on the right-hand side.
The graph clasper with no node and two leaves in the figure corresponds to a crossing change of two edges,
and we can also erase it up to $Y_{n+2}$-equivalence as we explained in the proof of Lemma~\ref{lem:refinedsurgery}.
After applying the same zip construction, 
we can separate the graph clasper of degree $n$ from the rest by edge slidings \cite[Lemma~2.6(2)(a)]{Mei06} and leaf crossings \cite[Lemma~2.6(2)(b)]{Mei06},
and the homology cylinder decomposes into the product $(\Sigma_{g,1}\times[-1,1])_G\circ (\Sigma_{g,1}\times[-1,1])_{G'}$.
\end{proof}

Let $J\in\widetilde{\J}_n^c$ be a connected Jacobi diagram,
and let $v$ be one of its univalent vertices.
Similar to the first term of $\delta_v(J)$ in Definition~\ref{def:delta},
we consider the Jacobi diagram $J_v$ obtained by doubling a neighborhood of the edge incident to $v$,
and change the labels of all the univalent vertices by the map $\varpi$.
Let us denote by 
\[
\tilde{\delta}_v(J)=(-1)^kJ_v\in\A_{n+1}^c, 
\]
where $k$ is the number of univalent vertices in $J$, except $v$, labeled by $\{\bar{1}_j^{\pm},\ldots, \bar{g}_j^{\pm}\mid j\in\Z_{\ge1}\}$.
By the former half of Lemma~\ref{lem:AS}, we obtain the following.

\begin{corollary}[refined AS relation]\label{cor:AS}
Let $n\ge2$.
For a connected Jacobi diagram $J\in \widetilde{\J}_n^c$,
we denote by $J'$ the Jacobi diagram obtained by changing the label of one univalent vertex $v$ in $J$ as $\ell(v)\mapsto \overline{\ell(v)}$.
Then, we have
\[
\lss(J)+\lss(J')+\ss(\tilde{\delta}_v(J))=0\in Y_n\I\C/Y_{n+2}.
\]
\end{corollary}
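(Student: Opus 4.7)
The approach is to translate the clasper-level identity of the first half of Lemma~\ref{lem:AS} into the Jacobi diagram formalism. First I would realize $J$ and $J'$ as graph claspers $G$ and $G'$ in $\Sigma_{g,1}\times[-1,1]$ via the construction defining the refined surgery map $\lss$. Because $J'$ differs from $J$ only in that the label of the chosen univalent vertex $v$ has been replaced by $\overline{\ell(v)}$, the clasper $G'$ differs from $G$ only by the insertion of a single positive half-twist on the edge $e$ incident to the leaf associated to $v$. Applying Lemma~\ref{lem:AS} to this edge yields
\[
(\Sigma_{g,1}\times[-1,1])_G \circ (\Sigma_{g,1}\times[-1,1])_{G'} \circ (\Sigma_{g,1}\times[-1,1])_H \sim_{Y_{n+2}} \Sigma_{g,1}\times[-1,1],
\]
where $H$ is the graph clasper obtained by doubling a neighborhood of $e$. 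Rewriting additively modulo $Y_{n+2}$ gives $\lss(J) + \lss(J') + [(\Sigma_{g,1}\times[-1,1])_H] = 0$ in $Y_n\I\C/Y_{n+2}$.

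Next I would identify the third term with $\ss(\tilde{\delta}_v(J))$. Since $H$ has $n+1$ nodes, its class lies in the image of the inclusion $Y_{n+1}\I\C/Y_{n+2}\hookrightarrow Y_n\I\C/Y_{n+2}$, and hence is determined by the ordinary surgery map applied to the underlying Jacobi diagram of $H$ with labels in $\{1^\pm,\dots,g^\pm\}$. The underlying uni-trivalent graph of $H$ is precisely $J_v$: splitting $v$ into a pair of adjacent univalent vertices on the same component of the tangle reproduces the doubled clasper's combinatorial pattern, and $\varpi$ provides the labels. Consequently $[(\Sigma_{g,1}\times[-1,1])_H] = \pm\ss(J_v)$, and what remains is to verify the sign equals $(-1)^k$.

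The sign comes from those edges of $H$ that carry a half-twist. For each univalent vertex $w\neq v$ of $J$ labeled by some $\bar{i}^\epsilon_j$, the edge incident to the corresponding leaf in $H$ still carries a positive half-twist inherited from $G$; under $\ss$ this half-twist can be absorbed via the AS relation in $\A_{n+1}^c$, contributing a factor of $-1$. By contrast, a half-twist on $e$ (present precisely when $v$ itself is bar-labeled) gets doubled, and a direct analysis of the doubling shows it becomes a full twist on one of the two new edges, which is trivial modulo $Y_{n+2}$ by \cite[Lemmas~A.1 and A.5]{MaMe13}. Combining these contributions gives $[(\Sigma_{g,1}\times[-1,1])_H] = (-1)^k \ss(J_v) = \ss(\tilde{\delta}_v(J))$, which closes the identity.

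The main obstacle is the sign bookkeeping of the preceding paragraph, namely the asymmetric treatment of the bar-label at $v$ itself (which disappears after doubling because its half-twist becomes a full twist on a doubled edge) versus the bar-labels at other vertices (which survive as individual signs through the AS relation). Everything else is a direct application of Lemma~\ref{lem:AS} plus the observation that the refined surgery map descends to the ordinary surgery map on $Y_{n+1}\I\C/Y_{n+2}$ for claspers of degree $n+1$.
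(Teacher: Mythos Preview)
Your proposal is correct and follows the same route as the paper, which simply states that the corollary follows from the first half of Lemma~\ref{lem:AS}; your write-up is essentially an unpacking of that one sentence.

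One small simplification for the sign discussion: rather than analyzing what happens to a half-twist on $e$ under doubling, note that the relation in Lemma~\ref{lem:AS} is symmetric in $G$ and $G'$ (the group is abelian) and that $\tilde{\delta}_v(J)=\tilde{\delta}_v(J')$ since $\varpi$ erases the bar and $k$ explicitly excludes $v$. Hence you may assume without loss of generality that $v$ is \emph{not} bar-labeled in $J$, so $G$ carries no twist on $e$ and $H$ carries none on the doubled edges; the only half-twists in $H$ then come from the bar-labeled vertices $w\neq v$, giving exactly $(-1)^k$. Your geometric claim that the doubled half-twist becomes a full twist on one edge is not quite accurate (doubling a half-twisted band produces two half-twisted parallel copies, possibly with a crossing), though the net sign contribution is still $+1$; the WLOG argument avoids this delicacy entirely.
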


Note that Corollary~\ref{cor:AS} implies the move in \cite[Lemma~E.9]{Oht02}, which corresponds to the AS relation in $\A_n^c$.
For $n\ge2$, let us define a homomorphism
\[
\rev\colon \Z\widetilde{\J}_n^c \to \Z\widetilde{\J}_n^c
\]
by the map which changes the cyclic order of every trivalent vertex to the other one,
and changing the label $a\in\mathcal{L}$ of each univalent vertex into $\bar{a}$.
Here, we regard $\bar{\bar{a}}$ as $a$.
Note that we have
\begin{align}
\lss(\rev(J))=\lss(J) \label{eq:rev}
\end{align}
for $J\in\widetilde{\J}_n^c$
because if we reverse neighborhoods of all the nodes of the graph clasper corresponding to $\lss(J)$,
we obtain the graph clasper corresponding to $\lss(\rev(J))$.

Using Corollary~\ref{cor:AS}, we obtain relations in $Y_n\I\C/Y_{n+2}$.
\begin{example}
Let $a_1,a_2,\ldots, a_n\in \{1_1^{\pm},\ldots, g_1^{\pm}\}\subset \mathcal{L}$ be mutually distinct elements.
We have
\begin{align*}
\lss(T(a_1,a_2,\ldots, a_n))
&=\lss(\rev(T(a_1,a_2,\ldots, a_n)))\\
&=\lss(T(\bar{a}_n\ldots,\bar{a}_2, \bar{a}_1))\\
&=(-1)^n\sum_{i=1}^n \ss(T(\varpi(a_n),\ldots, \varpi(a_{i+1}), \varpi(a_i), \varpi(a_i), \varpi(a_{i-1}),\ldots, \varpi(a_1)))\\
&\quad\qquad+(-1)^n\lss(T(a_n,\ldots, a_2, a_1)).
\end{align*}
\end{example}

Next, we introduce the refined STU relation.
The following lemma is well-known.
See the proof of Proposition~4.4 and Figure~29(a)--(f) in \cite{Hab00C}.
See also \cite[Lemma~2.2(1)]{MeYa12}.

\begin{lemma}\label{lem:STU}
Let $n\ge1$,
and let $G$ be a graph clasper with $n$ nodes in a $3$-manifold $M$ whose two leaves are locally described as in the left-hand side below,
where the thick lines imply the same bundle which may contain edges and leaves of claspers.
Let $G'$ be the graph clasper obtained from $G$ by swapping the positions of the two leaves as in the middle,
and let $H$ be the graph clasper obtained by merging two leaves into one leaf as in the right-hand side.
Then, we have
\[
M_{G'}\sim_{Y_{n+2}} M_{G\cup H}.
\]
\begin{center}
\includegraphics[height=2.5cm]{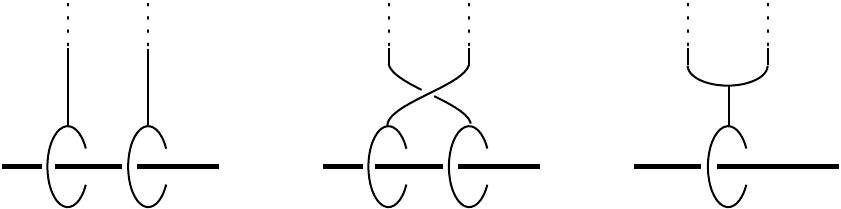}
\end{center}
\end{lemma}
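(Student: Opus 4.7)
The plan is to follow the clasper-calculus argument used to prove Proposition~4.4 of \cite{Hab00C}, adapted to the $Y_{n+2}$ setting. Since $G'$ is obtained from $G$ by a purely local modification (swapping two leaves inside a small 3-ball $B$), the computation can be carried out entirely inside $B$; outside $B$ the two claspers coincide.

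First, I would insert into $B$ a Hopf-linked pair of auxiliary trivial claspers. Surgery along such a canceling pair is equivalent to the identity, so this insertion does not change the underlying homology cylinder. The pair is chosen so that, once one of its components is absorbed into $G$ via a suitable sequence of Moves~5, 11, and 12 in \cite{Hab00C}, the two leaves have been swapped and the resulting configuration represents $G'$. Thus $M_{G'}$ is realized as surgery along $G$ together with the residue of the auxiliary pair.

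Second, I would analyze this residue using the zip construction (Move~12), in the same spirit as in the proof of Lemma~\ref{lem:AS}. The key combinatorial point is that the zip applied at the junction of the two leaves produces a single new trivalent vertex that joins the two edges incident to the leaves into a new edge ending in a single leaf encircling the bundle; this is exactly the diagram $H$, contributing a graph clasper of degree $n+1$. All other byproducts of the zip construction, together with the extra boxes coming from Move~11 and the crossing changes of edges and leaves used to separate components, attach at least two additional trivalent vertices to $G$ and therefore have degree $\geq n+2$. Finally, using edge slidings \cite[Lemma~2.6(2)(a)]{Mei06} and leaf crossings \cite[Lemma~2.6(2)(b)]{Mei06}, one separates $H$ from $G$ and identifies the resulting homology cylinder with $M_{G \cup H}$ up to $Y_{n+2}$-equivalence.

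The main obstacle, as in Lemma~\ref{lem:AS}, is the degree bookkeeping: one must verify that after the zip construction and the subsequent simplifications, every surviving piece either reconstitutes $G$, reconstitutes $H$, or has degree at least $n+2$ and hence vanishes modulo $Y_{n+2}$. Because the modification is local and the initial Hopf-linked pair contains only two nodes, the combinatorial tree of cases produced by the zip construction is finite and almost identical to the one already handled in Lemma~\ref{lem:AS}, so the check reduces to a routine (if tedious) finite enumeration of subtrees, together with an application of Move~3 to erase extra boxes as in that proof.
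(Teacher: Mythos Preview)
The paper does not actually prove this lemma: it states the result as well-known and refers the reader to the proof of Proposition~4.4 and Figure~29(a)--(f) in \cite{Hab00C}, and to \cite[Lemma~2.2(1)]{MeYa12}. So there is no paper-internal proof to compare against; the question is whether your outline matches the cited argument.

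Your proposal is in the right spirit but is more circuitous than the argument in \cite{Hab00C}. Habiro's Figure~29 sequence works directly on the two existing leaves: one applies Move~12 (or Move~7) to introduce a box between them, then a short chain of box moves produces a configuration that splits into the swapped pair $G'$ together with the extra degree-$(n+1)$ piece $H$. There is no need to insert an auxiliary Hopf-linked canceling pair; the moves act on the leaves already present, and the degree bookkeeping is immediate because only one new node is created. Your approach---inserting a canceling pair, absorbing one component, then zipping---can be made to work, but it introduces more boxes and more cases to track than necessary, and the ``routine finite enumeration'' you defer to is precisely where errors tend to creep in. If you want a self-contained argument, it is shorter to reproduce the six-step sequence of Figure~29 in \cite{Hab00C} directly, or to cite \cite[Lemma~2.2(1)]{MeYa12} where the $Y_{n+2}$-equivalence statement is already formulated.
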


Let $J\in\widetilde{\J}_n^c$ be a connected Jacobi diagram, and let $v$ and $w$ be two univalent vertices of $J$ with labels $i_j^\epsilon$ and $i_{j+1}^\epsilon$ for some $j=1,2,\ldots, n(i^\epsilon)-1$, respectively.
Similar to $\delta_{vw}(J)$ in Definition~\ref{def:delta},
we consider the Jacobi diagram $J_{vw}$ obtained by gluing a $Y$-shaped graph along the vertices $v$ and $w$ in $J$.
We set the label of the other univalent vertex in the $Y$-shaped graph by $i^\epsilon$ and change the labels of the other univalent vertices in $J_{vw}$ by the map $\varpi$.
Here, note that we choose the cyclic order of the trivalent vertex of the $Y$-shaped graph as in the figure of $\delta_{vw}(J)$.
Let us denote by
\[
\tilde{\delta}_{vw}(J)=(-1)^kJ_{vw}\in\A_{n+1}^c,
\]
where $k$ is the number of univalent vertices in $J$ labeled by $\{\bar{1}_j^{\pm},\ldots, \bar{g}_j^{\pm}\mid j\in\Z_{\ge1}\}$.
By Lemma~\ref{lem:STU}, we obtain the following.

\begin{corollary}[refined STU relation]
Let $n\ge2$,
and let $J\in \widetilde{\J}_n^c$ be a connected Jacobi diagram with two vertices $v$ and $w$ labeled by $i_j^\epsilon$ and $i_{j+1}^\epsilon$ for some $j=1,2,\ldots, n(i^\epsilon)-1$, respectively.
We denote by $J'$ the Jacobi diagram obtained by exchanging the labels of $v$ and $w$ in $J$.
Then, we have
\[
\lss(J')=\lss(J)+\ss(\tilde{\delta}_{vw}(J))\in Y_n\I\C/Y_{n+2}.
\]
\end{corollary}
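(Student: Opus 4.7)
The plan is to apply Lemma~\ref{lem:STU} directly to the pair of graph claspers realizing $\lss(J)$ and $\lss(J')$. Using the embedding convention for $\lss$, the leaves of the graph clasper $G_J$ realizing $\lss(J)$ corresponding to the univalent vertices labeled $i_1^\epsilon, i_2^\epsilon, \dots, i_{n(i^\epsilon)}^\epsilon$ (or their barred variants) are arranged along the component $i^\epsilon$ of the tangle $\Id_g$ in the order of their subscripts. Since $J'$ differs from $J$ only by exchanging the labels $i_j^\epsilon$ and $i_{j+1}^\epsilon$ at the vertices $v$ and $w$, the graph clasper $G_{J'}$ realizing $\lss(J')$ is obtained from $G_J$ by locally swapping the positions of the two adjacent leaves attached along $i^\epsilon$ at $v$ and $w$, leaving everything else unchanged.

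First I would apply Lemma~\ref{lem:STU} to this leaf-swap, which yields
\[
(\Sigma_{g,1}\times[-1,1])_{G_{J'}} \sim_{Y_{n+2}} (\Sigma_{g,1}\times[-1,1])_{G_J \cup H},
\]
where $H$ is the additional graph clasper of degree $n+1$ obtained by merging the two leaves via a $Y$-shaped branch attached to a new leaf on the component $i^\epsilon$. Next I would separate $G_J$ from $H$ in the product: by edge slidings (\cite[Lemma~2.6(2)(a)]{Mei06}) and leaf crossings (\cite[Lemma~2.6(2)(b)]{Mei06}), up to $Y_{n+2}$-equivalence the composite surgery factors as the composition of surgeries along $G_J$ and $H$, which reads
\[
\lss(J') = \lss(J) + [(\Sigma_{g,1}\times[-1,1])_{H}] \in Y_n\I\C/Y_{n+2}.
\]

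The remaining step is to identify the class of $(\Sigma_{g,1}\times[-1,1])_{H}$ in $Y_{n+1}\I\C/Y_{n+2} \hookrightarrow Y_n\I\C/Y_{n+2}$ with $\ss(\tilde{\delta}_{vw}(J))$. The underlying uni-trivalent graph of $H$ is, by construction, obtained from $J$ exactly by gluing a $Y$-shape at $v$ and $w$ and equipping the new free univalent vertex with a leaf on the component $i^\epsilon$; this matches the combinatorics of $J_{vw}$. The cyclic order at the new trivalent vertex comes out precisely as stipulated in the definition of $\delta_{vw}$, once one chooses compatible orientations for the bands near the merging point. The main bookkeeping is the sign: the remaining barred labels in $J$ (apart from $v$ and $w$, which after merging become an unbarred leaf via the $Y$-shape and thus contribute no sign) correspond to positive half-twists on edges of the resulting graph clasper, and each such half-twist may be removed at the cost of a sign by the refined AS relation (Corollary~\ref{cor:AS}), which modulo $Y_{n+2}$ reduces to the ordinary AS relation in $\A_{n+1}^c$ once we pass to the class in $Y_{n+1}\I\C/Y_{n+2}$. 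Exactly as in the definition of the projection $\mathcal{P}$, this produces the sign $(-1)^k$ where $k$ is the number of barred labels in $J$, yielding $\ss(\tilde{\delta}_{vw}(J))$.

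The main obstacle I expect is the careful verification of the sign and of the cyclic order at the new trivalent vertex, since both depend on the choice of orientations of the bands when applying Lemma~\ref{lem:STU}. This amounts to a local check that the conventions fixed in the refined surgery map and in the definition of $\tilde{\delta}_{vw}$ are compatible; once this is settled, the rest is a direct application of Lemma~\ref{lem:STU} together with the standard separation of graph claspers up to $Y_{n+2}$-equivalence.
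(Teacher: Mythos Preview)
Your proposal is correct and takes essentially the same approach as the paper: the paper's entire proof is the single sentence ``By Lemma~\ref{lem:STU}, we obtain the following,'' and you have simply unpacked that reference with the expected details (separation of $G_J$ and $H$ up to $Y_{n+2}$-equivalence, and identification of $H$ with $\ss(\tilde{\delta}_{vw}(J))$ via the sign convention coming from $\mathcal{P}$). One small remark: since the statement assumes $v$ and $w$ carry the \emph{unbarred} labels $i_j^\epsilon$ and $i_{j+1}^\epsilon$, your parenthetical about these vertices ``contributing no sign'' is moot---they are not counted in $k$ in any case---so the sign discussion reduces cleanly to removing the half-twists at the remaining barred vertices via the ordinary AS relation in $\A_{n+1}^c$.
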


\subsection{Applications of refined relations and symmetric and reversible Jacobi diagrams}
\label{subsec:App_of_refined}
Here, we give some relations among clasper surgeries along reversible and symmetric Jacobi diagrams defined below, and prove Theorem~\ref{thm:Y3C/Y5}.

For a connected Jacobi diagram $J$ labeled by $\{1^{\pm},\ldots, g^{\pm}\}$,
let us denote by $J'$ the Jacobi diagram obtained by reversing the cyclic order of every trivalent vertex to the other one.
We call $J$ \emph{reversible} if $J'$ is isomorphic to $J$ as a Jacobi diagram,
namely as a uni-trivalent graph endowed with cyclic orders on the trivalent vertices and labels on the univalent vertices.
Note that, when $n=\ideg J$ is odd, we obtain $2J=0\in\A_{n}^c$ by the AS relation.
Forgetting the cyclic orders,
we can identify $J'$ with $J$,
and the above isomorphism $J'\to J$ gives an isomorphism $r\colon J\to J$ of uni-trivalent graphs,
not of Jacobi diagrams.
We call it a \emph{reversing map} of $J$.
Note that the Jacobi diagram $J=\theta(a,b; c,c; b,a)$ satisfies $J=J' \in \A_8^c$, but $J$ is not reversible if $a \neq b$.

A connected Jacobi diagram $J$ labeled by $\{1^{\pm},\ldots, g^{\pm}\}$ is called \emph{symmetric} if it is symmetric along some line when depicted in a diagrammatic form in $\mathbb{R}^2$.
For example,
the Jacobi diagrams $T(a_1,a_2,\ldots,a_n,a_n,\ldots,a_2,a_1)$ and $T(a_1,a_2,\ldots,a_n,\ldots,a_2,a_1)$ are symmetric for $n\ge2$ with respect to the center lines when depicted as the one just before Example~\ref{ex:refined_surgery}.
Let $\A_{n}^{c,s}$ denote the submodule of $\A_{n}^{c}$ generated by symmetric Jacobi diagrams, which decomposes as $\bigoplus_{l\geq 0}\A_{n,l}^{c,s}$.

Symmetric Jacobi diagrams are reversible, and the line symmetry is an involutive reversing map.
The next lemma asserts that the converse is also true.
Note that a reversing map is not necessarily involutive.
The following Jacobi diagram has a reversing map $r$ satisfying $r^2\ne 1$ and $r^4=1$:
\[
\begin{tikzpicture}[scale=0.25, baseline={(0,-0.1)}, densely dashed]
 \draw (0,0) circle [radius=4];
 \draw (0:4)--(0:6) node[anchor=west] {$a$};
 \draw (45:4)--(45:2) node[anchor=north] {$b$};
 \draw (90:4)--(90:2) node[anchor=north] {$a$};
 \draw (135:4)--(135:6) node[anchor=east] {$b$};
 \draw (180:4)--(180:6) node[anchor=east] {$a$};
 \draw (225:4)--(225:2) node[anchor=south] {$b$};
 \draw (270:4)--(270:2) node[anchor=south] {$a$};
 \draw (315:4)--(315:6) node[anchor=west] {$b$};
\end{tikzpicture}\ .
\]

\begin{lemma}
A reversible Jacobi diagram $J$ with reversing map $r\colon J\to J$ satisfying $r^2=1$ is symmetric.
\end{lemma}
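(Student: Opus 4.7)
The plan is to construct an explicit symmetric drawing of $J$ in $\mathbb{R}^2$ with $r$ realized as the reflection across a vertical line $L$. Since the paper's notion of a \emph{diagrammatic form} permits edge crossings, this is a combinatorial task: assign positions to the vertices, route the edges as arbitrary arcs, and verify that the cyclic order at each trivalent vertex is realized.

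The first step is a local analysis of $r$. At each $r$-fixed trivalent vertex $v$, the reversing condition together with $r^2=\id$ forces the induced involution on the three incident half-edges to fix exactly one (the \emph{axial} half at $v$) and swap the other two, since the only cyclic-order-reversing involution of a $3$-cycle has this form. At each $r$-fixed univalent vertex, the unique half is necessarily axial. Consequently the edges of $J$ fall into three types: (a) pointwise fixed edges, with both endpoints fixed and both half-ends axial; (b) edges fixed as sets but with $r$-swapped endpoints, each contributing an isolated $r$-fixed midpoint; and (c) edges lying in free $r$-orbits of size two. Since a fixed univalent vertex forces its neighbor to be fixed as well, and since each fixed vertex has exactly one axial incidence, the type (a) edges form a perfect matching on the $r$-fixed vertices. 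The \emph{axial set} $A$, defined as the union of the type (a) edges and the type (b) midpoints, is therefore a disjoint union of closed segments and isolated points.

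In the second step I place $A$ along $L$ in any linear order: segments for type (a) edges, points for type (b) midpoints. Each $r$-orbit of non-fixed vertices is drawn as a mirror-image pair off $L$; each type (c) edge and its $r$-partner are drawn as mirror-image arcs on the two sides; and each type (b) edge is drawn as an arc from its midpoint into one half-plane together with its reflection in the other. Crossings are permitted, so routing the arcs poses no obstruction.

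The remaining task is to realize the prescribed cyclic orders. At a non-fixed vertex I freely order the incident half-edges on one side; the mirror drawing then places its partner's half-edges in the reverse cyclic order, which is exactly what the reversing property of $r$ demands. At an $r$-fixed trivalent vertex $v$ on $L$, the axial edge emanates along $L$ (up or down) while the two swapped edges emanate into the two half-planes (one to the right, one to the left); varying these binary choices realizes either of the two possible cyclic orders on the three incident edges at $v$. The main technical point I expect to verify carefully is that at the two endpoints of a common type (a) segment these choices can be made independently, so that the cyclic orders prescribed at both endpoints are realized simultaneously; this is the case because the axial direction at each endpoint is determined by the segment placement while the right/left assignment at each endpoint is an independent degree of freedom. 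Combining these pieces yields the required symmetric diagrammatic form, proving the lemma.
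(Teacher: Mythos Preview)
Your proof is correct and follows essentially the same approach as the paper: identify the fixed-point set of $r$ as a disjoint union of edges and edge-midpoints, place it on the axis $L$, split the non-fixed vertices into mirror-symmetric halves, and route the edges symmetrically. The paper's proof is terse, simply asserting that the edges can be connected so that the line symmetry realizes $r$ and reverses every cyclic order; you supply exactly the local analysis (the axial half at each fixed vertex, the three edge types, and the verification that the left/right and up/down choices give enough freedom to realize the prescribed cyclic orders at fixed trivalent vertices) that the paper omits.
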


\begin{proof}
Let us denote the fixed point set of $r$ by $\Fix(r)\subset J$.
Since $r$ changes the cyclic order of each trivalent vertex,
we see that $\Fix(r)$ is a disjoint union of edges and the midpoints of edges.

We embed $\Fix(r)$ in $\{0\}\times \mathbb{R}\subset\mathbb{R}^2$ in an arbitrary way,
and divide the rest of the uni-trivalent vertices contained in $J\setminus\Fix(r)$ into two subsets $V_1\sqcup V_2$ satisfying $r(V_1)=V_2$.
Set each point $v\in V_1$ in $\mathbb{R}_{<0}\times \mathbb{R}$ and $r(v)\in V_2$ in its symmetric point about the second axis.
Connect the vertices by edges so that the actions of the line symmetry about the second axis and $r$ to the set of edges coincides.
Also, note that the line symmetry must change the cyclic order of every trivalent vertex.
Then, we obtain a diagrammatic form of $J$ with the reversing map realized as the line symmetry.
\end{proof}

Let us denote
\[
O(a_1,a_2,a_3,\ldots, a_n) =
\begin{tikzpicture}[baseline=0.5ex, scale=0.25, densely dashed]
\draw (0,0) circle [radius=2]; 
\draw (10:2) -- (10:4);
\draw (50:2) -- (50:4);
\draw (90:2) -- (90:4);
\draw (130:2) -- (130:4);
\node [right] at (10:3.8) {$a_3$};
\node [above right] at (50:3.6) {$a_2$};
\node [above] at (90:3.8) {$a_1$};
\node [above left] at (130:3.6) {$a_n$};
\node at (-10:3){$\cdot$};
\node at (-25:3){$\cdot$};
\node at (-40:3){$\cdot$};
\node at (150:3){$\cdot$};
\node at (165:3){$\cdot$};
\end{tikzpicture}\ ,
\]
where $a_i$'s are in $\mathcal{L}$ or $\{1^{\pm},\ldots,g^{\pm}\}$.

\begin{remark}
\label{rem:symmetric}
We slightly extend the definition of a symmetric Jacobi diagram from \cite{NSS21}.
In \cite{NSS21}, we considered symmetries of $1$-loop Jacobi diagrams only of the forms
\begin{gather}
O(a_1,a_2,\ldots,a_m,a_m,\ldots, a_2,a_1),\
O(a_1,a_2,\ldots,a_{m},a_{m+1},a_{m},\ldots,a_2), \label{eq:evensymmetric}\\
O(a_1,a_2,\ldots,a_m,\ldots,a_2,a_1) \label{eq:oddsymmetric}
\end{gather}
for $a_i\in\{1^{\pm},\ldots,g^{\pm}\}$.
In other words,
we considered symmetries of 1-loop Jacobi diagrams whose trivalent vertices are not attached to rooted trees except struts.
\end{remark}

Let us denote by $B_n^s$ the set of
Jacobi diagrams of the forms \eqref{eq:evensymmetric} (resp.\ \eqref{eq:oddsymmetric}) in Remark~\ref{rem:symmetric} when $n=2m$ (resp.\ $n=2m-1$),
which were called symmetric in \cite{NSS21}.
We also denote by $\ang{B_n^s}$ the submodule of $\A_{n,1}^c$ generated by $B_n^s$, which was denoted by $\A_{n,1}^{c,s}$ in \cite{NSS21}.
In this paper, we also consider
\begin{equation}\label{fig:unexpected_sym}
\begin{tikzpicture}[scale=0.3, baseline={(0,-0.2)}, densely dashed]
 \draw (0,0) circle [radius=2];
 \draw (150:2) -- (150:3);
 \draw (150:3) -- (160:4) node[anchor=east] {$a$};
 \draw (150:3) -- (140:4) node[anchor=east] {$b$};
 \draw (-150:2) -- (-150:3);
 \draw (-150:3) -- (-160:4) node[anchor=east] {$a$};
 \draw (-150:3) -- (-140:4) node[anchor=east] {$b$};
 \draw (30:2) -- (30:3) node[anchor=west] {$c$};
 \draw (-30:2) -- (-30:3) node[anchor=west] {$c$};
\end{tikzpicture}
\end{equation}
as a symmetric Jacobi diagram, 
which is equal to
\[
O(a,b,c,c,b,a) + O(b,a,c,c,a,b) -2O(a,b,c,c,a,b) \in \A_{6,1}^c 
\]
under the AS and IHX relations. 
Also, note that $O(a,b,c,a,b,c) \in \A_{6,1}^{c,s}$ since one can find symmetry by flipping the half of the circle and by using the AS relation.

\begin{remark}
\label{rem:OequalA}
Suppose that $n$ is odd.
Since $\ang{B_n^s}=\tor \A_{n,1}^c$ as shown in \cite[Proposition~5.2]{NSS21},
we see that $\ang{B_n^s}=\A_{n,1}^{c,s}$.
\end{remark}

Let $J$ (resp.\ $\tilde{J}$) be a connected Jacobi diagram of $\ideg=n$ labeled by $\{1^{\pm},\ldots, g^{\pm}\}$ (resp.\ $\{1_j^{\pm},\ldots, g_j^{\pm}\mid j\in\Z_{\ge1}\}\subset\mathcal{L}$ satisfying the condition in Definition~\ref{def:jacobi}).
We call $\tilde{J}$ a \emph{lift} of $J$ if $\tilde{J}$ turns into $J$ when we change the labels of $\tilde{J}$ by $\varpi$.
In this case, the $Y_{n+2}$-equivalence class $\lss(\tilde{J})\in Y_n\I\C/Y_{n+2}$ is a lift of $\ss(J)\in Y_n\I\C/Y_{n+1}$.
For a symmetric Jacobi diagram $J$ with line symmetry $r\colon J\to J$,
we also call $\tilde{J}$ a \emph{good lift} of $J$ with respect to $r$ if the difference between the subscripts of the labels $\ell(r(v))$ and $\ell(v)$ of $\tilde{J}$ in $\mathcal{L}$ is at most one for every univalent vertex $v\in U(J)$.
It is obtained by choosing consecutive numbers as the subscripts of the labels in $\mathcal{L}$ for each pair $(v, r(v))$ of univalent vertices such that $v\ne r(v)$.

Using the refined AS and refined STU relations,
we obtain some elements in the kernels of $\ss\colon \A_n^c\to Y_n\I\C/Y_{n+1}$ and $\lss\colon \Z\widetilde{\J}_n^c \to Y_n\I\C/Y_{n+2}$.

\begin{theorem}\label{thm:symrelation}
Let $n\ge2$,
and let $J$ be a symmetric Jacobi diagram of $\ideg=n$ labeled by $\{1^{\pm},\ldots, g^{\pm}\}$ with line symmetry $r\colon J\to J$.
Let us denote by $\tilde{J}\in\Z\widetilde{\J}_n^c$ a good lift of $J\in\A_n^c$ with respect to $r$, and denote
\[
U^-(J)=\{v\in U(J) \mid \text{the subscript of $\ell(v)$ of $\tilde{J}$ is lower than that of $\ell(r(v))$}\}.
\]
Then, we have the following.
\begin{enumerate}
\item \label{item:even}
When $n$ is even,
\[
\sum_{\substack{v\in U(J)\\r(v)=v}} \ss(\tilde{\delta}_v(\tilde{J}))
+\sum_{v\in U^-(J)}\ss(\tilde{\delta}_{v\, r(v)}(\tilde{J}))=0\in Y_{n+1}\I\C/Y_{n+2}.
\]
\item \label{item:odd}
When $n$ is odd,
\[
2\lss(\tilde{J})=
-\sum_{v\in U(J)}\ss(\tilde{\delta}_v(\tilde{J}))
-\sum_{v\in U^-(J)}\ss(\tilde{\delta}_{v\, r(v)}(\tilde{J}))\in Y_{n}\I\C/Y_{n+2}.
\]
\end{enumerate}
\end{theorem}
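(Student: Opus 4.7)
The plan is to exploit the identity $\lss(\rev(\tilde{J}))=\lss(\tilde{J})$ from~\eqref{eq:rev} together with the symmetry $r$, converting $\rev(\tilde{J})$ into a Jacobi diagram isomorphic to $\tilde{J}$ through a controlled sequence of refined STU and refined AS moves.

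First I would introduce the push-forward $r_\ast\tilde{J}$: the Jacobi diagram on the same underlying graph as $\tilde{J}$ whose cyclic order and label at each vertex $v$ are transported from $r(v)$. Since $r$ is a Jacobi diagram isomorphism from $\tilde{J}$ to $r_\ast\tilde{J}$, we have $\lss(r_\ast\tilde{J})=\lss(\tilde{J})$. Moreover, because $r$ is a reversing map, $r_\ast\tilde{J}$ has its cyclic orders reversed relative to $\tilde{J}$, so $r_\ast\tilde{J}$ and $\rev(\tilde{J})$ share the same underlying graph with the same reversed cyclic orders; they differ only in labels, with $r_\ast\tilde{J}$ carrying $\ell(r(v))$ at $v$ and $\rev(\tilde{J})$ carrying $\overline{\ell(v)}$ at $v$.

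The next step would be to transform $\rev(\tilde{J})$ into $r_\ast\tilde{J}$ by applying refined STU at each pair $(v,r(v))$ with $v\in U^-(J)$ (legal since the good-lift condition guarantees that $\ell(v)$ and $\ell(r(v))$ have consecutive subscripts) and then applying the refined AS once at each of the $|U(J)|$ univalent vertices to flip bars. Each refined STU contributes a correction $\ss(\tilde{\delta}_{v,r(v)}(\tilde{J}))$, and each refined AS contributes a sign $-1$ on the leading term together with a correction $\ss(\tilde{\delta}_v(\tilde{J}))$. Since $|U(J)|=n+2-2l\equiv n\pmod 2$, the cumulative sign on the leading term is $(-1)^n$, and combining with $\lss(\rev(\tilde{J}))=\lss(r_\ast\tilde{J})=\lss(\tilde{J})$ yields
\[
(1-(-1)^n)\lss(\tilde{J})=-\sum_{v\in U(J)}\ss(\tilde{\delta}_v(\tilde{J}))-\sum_{v\in U^-(J)}\ss(\tilde{\delta}_{v,r(v)}(\tilde{J})).
\]
For $n$ odd the left-hand side equals $2\lss(\tilde{J})$, immediately giving statement~(2). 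For $n$ even the left-hand side vanishes; to reduce to statement~(1) one verifies that at each paired $(v,r(v))$ the contribution $\ss(\tilde{\delta}_v(\tilde{J})+\tilde{\delta}_{r(v)}(\tilde{J}))$ cancels, because the extended symmetry $r$ identifies the doubled diagrams $J_v$ and $J_{r(v)}$ up to reversal of all $n+1$ cyclic orders, giving $J_{r(v)}=-J_v$ in $\A_{n+1}^c$ for $n$ even, so that only the fixed-vertex terms survive in the $\tilde{\delta}_v$-sum.

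The principal obstacle I anticipate is the careful tracking of signs arising from the alternating sign on successive refined-AS bar flips interacting with the $(-1)^k$ factors in the definitions of $\tilde{\delta}_v$ and $\tilde{\delta}_{v,r(v)}$. Showing that these signs reorganize into the symmetric form stated in the theorem, independently of the order in which the moves are performed, rests on the fact that in $\A_{n+1}^c$ the diagrams underlying $\tilde{\delta}_v$ and $\tilde{\delta}_{v,r(v)}$ depend only on base labels and cyclic orders once all bar information has been discarded by $\varpi$, combined with the Euler-characteristic parity $|U(J)|\equiv n\pmod 2$.
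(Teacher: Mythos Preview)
Your proposal is correct and follows essentially the same route as the paper's proof: exploit $\lss(\rev(\tilde J))=\lss(\tilde J)$, pass from $\rev(\tilde J)$ to a diagram isomorphic to $\tilde J$ by applying the refined AS relation at every univalent vertex (picking up the global sign $(-1)^{|U(J)|}=(-1)^n$ and the $\tilde\delta_v$-corrections) and the refined STU relation at each pair $(v,r(v))$ with $v\in U^-(J)$ (picking up the $\tilde\delta_{v\,r(v)}$-corrections), then for even $n$ kill the non-fixed $\tilde\delta_v$-terms via $\tilde\delta_v(\tilde J)=-\tilde\delta_{r(v)}(\tilde J)$. The paper organizes the same computation slightly differently, doing AS first to produce an intermediate diagram $\tilde J_1$ (your $r_\ast\tilde J$ up to the graph isomorphism $r$) and then STU; since the refined STU corollary is only stated for unbarred labels, that order avoids the need to extend it, so you may want to swap your two steps to match---but this is cosmetic and does not affect the argument.
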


\begin{proof}
The line symmetry $r\colon J\to J$ implies that
the Jacobi diagram $\rev(\tilde{J})$ is isomorphic to $\tilde{J}$ if we map the labels of each Jacobi diagram by $\varpi$.
Since $\tilde{J}$ is a lift,
all the labels of $\tilde{J}$ and $\rev(\tilde{J})$ are in $\{1_j^{\pm},\ldots, g_j^{\pm}\mid j\in\Z_{\ge1}\}$ and $\{\bar{1}_j^{\pm},\ldots, \bar{g}_j^{\pm}\mid j\in\Z_{\ge1}\}$, respectively.
Change all the labels of univalent vertices in $\rev(\tilde{J})$ as $a\mapsto \bar{a}$,
and denote it by $\tilde{J}_1$.
By the refined AS relation among claspers and the AS relation, we have
\begin{equation}\label{eq:sym2}
\lss(\rev(\tilde{J})) = (-1)^n\sum_{v\in U(J)}\ss(\tilde{\delta}_v(\tilde{J}))+(-1)^n\lss(\tilde{J}_1)\in Y_n\I\C/Y_{n+2}.
\end{equation}

The difference between $\tilde{J}_1$ and $\tilde{J}$ is only the subscripts of the labels.
Thus, if we exchange the subscripts of the labels of all $v\in U^-(J)$ in $\tilde{J}_1$ with that of $r(v)$,
we obtain $\tilde{J}$.
Since $\tilde{J}$ is a good lift,
the subscript of $\ell(v)$ in $\tilde{J}$ is lower than that of $\ell(r(v))$ by one for $v\in U^-(J)$.
By the refined STU relation, we have
\begin{equation}\label{eq:sym3}
\lss(\tilde{J}_1)=\sum_{v\in U^-(J)}\ss(\tilde{\delta}_{v\,r(v)}(\tilde{J}))+\lss(\tilde{J})\in Y_n\I\C/Y_{n+2}.
\end{equation}

By the equalities \eqref{eq:rev}, \eqref{eq:sym2}, and \eqref{eq:sym3}, we obtain
\[
(1-(-1)^n)\lss(\tilde{J})
=(-1)^n\sum_{v\in U(J)}\ss(\tilde{\delta}_v(\tilde{J}))+(-1)^n\sum_{v\in U^-(J)}\ss(\tilde{\delta}_{v\,r(v)}(\tilde{J})).
\]
When $n$ is odd, the conclusion follows from this equality.
Consider the case where $n$ is even.
The line symmetry $r$ gives an isomorphism between the uni-trivalent graphs each of which represents $\tilde{\delta}_v(\tilde{J})$ and $\tilde{\delta}_{r(v)}(\tilde{J})$ for $v\in U(J)$, respectively.
Thus, we have $\tilde{\delta}_v(\tilde{J})=-\tilde{\delta}_{r(v)}(\tilde{J})$ by the AS relation.
It implies that
\[
\sum_{v\in U(J)}\ss(\tilde{\delta}_v(\tilde{J}))
=\sum_{\substack{v\in U(J)\\r(v)=v}} \ss(\tilde{\delta}_v(\tilde{J})),
\]
and the conclusion follows.
\end{proof}

\begin{remark}
In the notation of Theorem~\ref{thm:symrelation},
$\tilde{\delta}_{v\, r(v)}(\tilde{J})\in\A_{n+1}^c$ for $v\in U^{-}(J)$ is represented by a symmetric Jacobi diagram.
Thus, we have $2\tilde{\delta}_{v\, r(v)}(\tilde{J})=0$ when $n$ is even.
We also have 
\[
\tilde{\delta}_{v\, r(v)}(\tilde{J})
=-\tilde{\delta}_{r(v)\, v}(\tilde{J})\in\A_{n+1}^c
\]
by the definition of $\tilde{\delta}_{v\, r(v)}(\tilde{J})$ and the AS relation.
Thus, the relator obtained in Theorem~\ref{thm:symrelation}(\ref{item:even}) is essentially independent of the choice of a good lift.
\end{remark}

For elements $a_i, b_j, c_k$ in $\mathcal{L}$ or $\{1^{\pm},\ldots,g^{\pm}\}$,
let us denote
\begin{equation}
\theta(a_1,\dots,a_p; b_1,\dots,b_q; c_1,\dots,c_r) =
\thetapqr\ ,
\label{eq:theta}
\end{equation}
which is a Jacobi diagram of $\ideg=p+q+r+2$.

\begin{example}
\label{ex:deg3}
Let us see examples of Theorem 3.14 in a small degree.
For $i,j,k \in \{1^\pm,\dots,g^\pm\}$ distinct, we have
\begin{align*}
 2\lss(T(i_1,j_1,k_1,j_2,i_2)) &= -\ss( T(i,j,k,k,j,i)+2T(i,i,j,k,j,i)+2T(i,j,j,k,j,i) \\
 &\qquad +O(k,j,i,j)+O(k,i,j,i)), \\
 2\lss(O(i_1,j_1,i_2)) &= -\ss(O(i,j,j,i) +2O(i,i,j,i) +\theta(;i;j)),
\end{align*}
which are the case $m=2$ in Corollary~\ref{cor:2torsion} below.
\end{example}

Setting $J=O(a_1,a_2,\ldots,a_m,\ldots, a_3, a_2)$ in Theorem~\ref{thm:symrelation}(\ref{item:even}) and considering the line symmetry $r$ which fixes the univalent vertices labeled by $a_1$ and $a_m$,
we obtain the following.
It is used in the proof of Theorem~\ref{thm:Ker_sn1} in Section~\ref{sec:Ker_OneLoop}.

\begin{corollary}
\label{cor:1LoopRel}
For $m\ge2$ and $a_1, \dots, a_m \in \{1^\pm, \dots, g^\pm\}$,
\begin{align*}
&O(a_1,\ldots,a_{m-1},a_m,a_{m-1},\ldots,a_1)
+O(a_m,\ldots,a_{2},a_1,a_{2},\ldots,a_m)\\
&+\sum_{i=2}^{m-1}\theta(a_{i-1},\ldots,a_1,\ldots,a_{i-1};a_i;a_{i+1},\ldots,a_m,\ldots,a_{i+1})\in\Ker \ss.
\end{align*}
\end{corollary}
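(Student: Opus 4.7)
The plan is to apply Theorem~\ref{thm:symrelation}(\ref{item:even}) to the symmetric $1$-loop Jacobi diagram
\[
J = O(a_1, a_2, \ldots, a_{m-1}, a_m, a_{m-1}, \ldots, a_3, a_2) \in \A_{2m-2}^{c},
\]
whose line symmetry $r$ fixes the two leaves labeled $a_1$ and $a_m$ and swaps each pair of $a_j$-leaves for $j \in \{2, \ldots, m-1\}$. I will choose a good lift $\tilde{J}$ of $J$ using consecutive subscripts on each swapped pair and no bars; then $(-1)^k = 1$ in every contribution $\tilde\delta_v(\tilde J)$ and $\tilde\delta_{v\,r(v)}(\tilde J)$, so each term can be read directly off the blow-up or $Y$-join construction in Definition~\ref{def:delta}.

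For each $j \in \{2, \ldots, m-1\}$, take $v = v_j$ at position $j$ as the representative in $U^-(J)$. The merge $\tilde\delta_{v_j\,r(v_j)}(\tilde J)$ joins the two $a_j$-leaves on the circle by a $Y$-shape bearing one new $a_j$-leaf, producing a theta-graph whose three arcs are (i) the circular arc of $J$ from position $j$ through position~$1$ to position $2m-j$, carrying leaves $a_{j-1},\ldots,a_1,\ldots,a_{j-1}$, (ii) the new chord through the $Y$-vertex with a single $a_j$-leaf, and (iii) the circular arc through position $m$, carrying $a_{j+1},\ldots,a_m,\ldots,a_{j+1}$. This is precisely
\[
\theta(a_{j-1},\ldots,a_1,\ldots,a_{j-1};\, a_j;\, a_{j+1},\ldots,a_m,\ldots,a_{j+1}).
\]

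For the fixed vertex $v_1$ labeled $a_1$, the first term of Definition~\ref{def:delta} prescribes $\tilde\delta_{v_1}(\tilde J) = J_{v_1}$ as the blow-up of the trivalent vertex $t_1$ adjacent to $v_1$: $t_1$ is replaced by two trivalent vertices joined by a new edge, each carrying a new $a_1$-leaf. The resulting $1$-loop diagram has an enlarged circle whose cyclic label sequence, read from one of the new vertices, is $a_1, a_1, a_2, a_3, \ldots, a_{m-1}, a_m, a_{m-1}, \ldots, a_2$, i.e.\ a rotation of $a_1, a_2, \ldots, a_m, \ldots, a_2, a_1$. Hence $\tilde\delta_{v_1}(\tilde J) = O(a_1, a_2, \ldots, a_m, \ldots, a_2, a_1) \in \A_{2m-1}^c$, and the identical argument at $v_m$ gives $\tilde\delta_{v_m}(\tilde J) = O(a_m, a_{m-1}, \ldots, a_1, \ldots, a_{m-1}, a_m)$.

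Plugging these identifications into the conclusion of Theorem~\ref{thm:symrelation}(\ref{item:even}) shows that $\ss$ of the sum displayed in the corollary vanishes in $Y_{2m-1}\I\C/Y_{2m}$, which is the desired claim. The one substantive point that requires care is the geometric identification of the blow-up $J_{v_1}$ with the symmetric $O$-diagram by reading off labels around the enlarged circle; the $\theta$-contributions from the swapped pairs and the sign bookkeeping (no bars, hence $(-1)^k = 1$) are immediate from the definitions.
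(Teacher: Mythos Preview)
Your proof is correct and follows exactly the approach the paper takes: apply Theorem~\ref{thm:symrelation}(\ref{item:even}) to $J=O(a_1,a_2,\ldots,a_m,\ldots,a_3,a_2)$ with the line symmetry fixing the $a_1$- and $a_m$-leaves. The paper states only this setup and leaves the identification of the $\tilde\delta_v$ and $\tilde\delta_{v\,r(v)}$ terms to the reader, while you spell them out explicitly; your computations of those terms are accurate.
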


\begin{remark}
\label{rem:TrueKer}
By Corollary~\ref{cor:1LoopRel}, we have $O(a_1,a_2,a_1)+O(a_2,a_1,a_2) \in \Ker\ss$, which is proved in \cite[Lemma~6.6(1)]{NSS21} in a different way.
Now it is natural to ask whether
\[
O(a_1, \dots, a_{m-1}, a_m, a_{m-1}, \dots, a_1) + O(a_m, \dots, a_2, a_1, a_2, \dots, a_m) \in \Ker \ss
\]
for $m\geq 3$.
We see that the case $m=3$ is true by Corollary~\ref{cor:1LoopRel} and Lemma~\ref{lem:Theta111} below.
\end{remark}

\begin{lemma}
\label{lem:Theta111}
$\theta(a;b;c)=0 \in \A_{5,2}^{c}$.
\end{lemma}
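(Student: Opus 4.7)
The plan is to prove $\theta(a;b;c)=0$ in $\A_{5,2}^{c}$ via a two-step IHX argument. First, three IHX applications at the edges incident to one pole reduce $\theta(a;b;c)$ to twice a simpler 2-loop diagram. Second, two further IHX applications on this simpler diagram show that it is already 2-torsion, completing the proof.

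For the first step, let $P$ be one of the two pole vertices of $\theta(a;b;c)$, and let $A$, $B$, $C$ denote the middle trivalent vertices on the three strands carrying the leaves $a$, $b$, $c$, respectively. Applying the IHX relation to each of the edges $PA$, $PB$, $PC$ expresses $\theta(a;b;c)$ as a difference of two 2-loop Jacobi diagrams, each of which is itself a theta graph with one empty strand and one strand bearing two leaves. Setting
\[
\alpha=\theta(a;b,c;\,),\qquad \beta=\theta(b;a,c;\,),\qquad \gamma=\theta(c;a,b;\,)
\]
with the convention that the first label on a 2-leaf middle strand is adjacent to the left pole, and invoking the strand-reversal sign identity $\theta(x;y_1,y_2;\,)=-\theta(x;y_2,y_1;\,)$ (which follows from applying AS at the five cyclic-order-reversing vertices under the pole swap), the three IHX relations become
\[
\theta(a;b;c)=\gamma-\beta=\alpha+\gamma=\alpha-\beta.
\]
Pairwise subtraction then forces $\alpha=\gamma=-\beta$, and hence $\theta(a;b;c)=2\alpha$.

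For the second step, I apply IHX to the internal edge $N_{1}N_{2}$ of the 2-leaf middle strand of $\alpha$. One of the two resolutions is $\theta(a;c,b;\,)=-\alpha$ by the strand-reversal identity; the other, call it $H$, is a 2-loop Jacobi diagram in which the leaves $b$ and $c$ both attach to a single new trivalent vertex via a Y-shaped subtree. The IHX reads $\alpha=H-(-\alpha)$, forcing $H=0$. Applying IHX once more, now to the single Y-edge of $H$, resolves $H$ as $\alpha-\theta(a;c,b;\,)=2\alpha$. Combining gives $2\alpha=H=0$, and therefore $\theta(a;b;c)=2\alpha=0$.

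The main technical obstacle throughout is the bookkeeping of cyclic orders: in particular, verifying the strand-reversal sign identity $\theta(x;y_1,y_2;\,)=-\theta(x;y_2,y_1;\,)$ via an odd-parity AS count under the pole swap, and correctly identifying which of the two IHX resolutions at each step produces the expected theta graph on the nose (up to AS signs).
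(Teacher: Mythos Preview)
Your second step is circular. The two IHX moves you perform---on the internal edge $N_1N_2$ of $\alpha=\theta(a;b,c;\,)$ and then on the Y-edge of $H$---resolve the \emph{same} four half-edges (the two leaf half-edges $b,c$ and the two half-edges running toward the poles) against the same rest-of-diagram. The IHX relation among the three resolutions of a fixed four-half-edge configuration is a \emph{single} linear relation; re-declaring which resolution plays the role of ``$I$'' does not manufacture an independent equation. Hence you cannot legitimately extract both $H=0$ and $H=2\alpha$: whichever sign assignment is correct, the second computation merely reproduces it and yields no new information. With step~2 collapsed, your argument reduces to the unproved claim $2\alpha=0$. In fact this claim is false for distinct $a,b,c$: one checks, for instance via the $\mathfrak{sl}_2$ weight system of Section~\ref{sec:Ker_HigherLoop} and Lemma~\ref{lem:Weight-bu}, that $\alpha=\theta(a;b,c;\,)$ has nonzero image in the torsion-free module $S(H\otimes\g)$, so $\alpha$ is non-torsion in $\A_{5,2}^c$. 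This also tells you there must be a sign error among the three pole-IHX applications in step~1, since together they would give $\theta(a;b;c)=2\alpha\neq 0$, contradicting the lemma.

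The paper's argument is different and sidesteps these sign traps entirely. A single IHX at one pole writes $\theta(a;b;c)$ as the difference $\theta(a;\,;c,b)-\theta(a,b;\,;c)$, and then one observes that both terms on the right are obtained by blowing up the \emph{same} diagram $O(a,b,c)$ at different trivalent vertices. The well-definedness of the blow-up map $\bu$ (independence of the chosen vertex) forces these two terms to coincide, whence $\theta(a;b;c)=0$.
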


\begin{proof}
By the IHX relation, we have $\theta(a;;c,b) = \theta(a;b;c)+\theta(a,b;;c)$, where we use the notation \eqref{eq:theta} in the case $q=0$.
Here well-definedness of the map $\bu$ defined in Section~\ref{sec:Jacobi_diagrams} implies that $\theta(a;;c,b) = \bu(O(a,b,c)) = \theta(a,b;;c)$, and hence $\theta(a;b;c)=0$.
\end{proof}

The Jacobi diagrams $T(a_1,a_2,\ldots,a_{m+1},\ldots,a_2,a_1)$ and $O(a_1,a_2,\ldots,a_m,\ldots,a_2,a_1)$ are symmetric with respect to the apparent lines.
The next corollary is obtained by applying Theorem~\ref{thm:symrelation}(\ref{item:odd}) to these Jacobi diagrams.

\begin{corollary}\label{cor:2torsion}
Let $m\ge2$ and $a_1, \dots, a_m \in \{1^\pm, \dots, g^\pm\}$.
Let $J\in\Z\widetilde{\J}_{2m-1}^c$ and $J'\in\Z\widetilde{\J}_{2m-1}^c$ be good lifts respectively of the Jacobi diagrams $T(a_1,a_2,\ldots,a_{m+1},\ldots,a_2,a_1)$ and $O(a_1,a_2,\ldots,a_m,\ldots,a_2,a_1)$ with respect to the line symmetries.
Then, we have
\begin{align*}
2\lss(J)&=-\ss(T(a_1,a_2,\ldots,a_{m+1},a_{m+1},\ldots,a_2,a_1))\\
&\quad-2\sum_{i=1}^m\ss(T(a_1,a_2,\ldots, a_{i-1},a_i,a_i,a_{i+1}\ldots, a_m,a_{m+1},a_m,\ldots,a_2,a_1))\\
&\quad\pm\ss(O(a_{m+1},a_m,a_{m-1},\ldots,a_2,a_1,a_2,\ldots,a_{m-1},a_m))\\
&\quad+\sum_{i=2}^{m}\ss(\pm O(a_{m+1},a_m,a_{m-1},\ldots,a_{i+1},v_{i-1},a_i,v_{i-1},a_{i+1},\ldots,a_{m-1},a_m)),
\displaybreak[1]\\
2\lss(J')&=-\ss(O(a_1,a_2,\ldots,a_m,a_m,\ldots,a_2,a_1))\\
&\quad-2\sum_{i=1}^{m-1}\ss(O(a_1,a_2,\ldots, a_{i-1},a_i,a_i,a_{i+1}, \ldots,a_m,\ldots,a_2,a_1))\\
&\quad +\sum_{i=1}^{m-1}\ss(\pm\theta(a_{i-1},\ldots,a_1,a_1\ldots,a_{i-1};a_i;a_{i+1},\ldots,a_m,\ldots,a_{i+1}))\in Y_{2m-1}\I\C/Y_{2m+1}.
\end{align*}
Here, $O(a_{m+1},a_m,a_{m-1},\ldots,a_{i+1},v_{i-1},a_i,v_{i-1},a_{i+1},\ldots,a_{m-1},a_m)$ denotes the Jacobi diagram obtained by attaching $O(a_{m+1},a_m,a_{m-1},\ldots,a_{i+1},*,a_i,*,a_{i+1},\ldots,a_{m-1},a_m)$ to two copies of $v_{i-1}=T(*,a_{i-1},\ldots, a_2,a_1)$ at the vertices labeled by $*$,
and the signs $\pm$ depend on the choice of a good lift.
\end{corollary}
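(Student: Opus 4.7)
The plan is to deduce Corollary~\ref{cor:2torsion} as a direct specialization of Theorem~\ref{thm:symrelation}(\ref{item:odd}) applied to the two symmetric Jacobi diagrams $J_T := T(a_1,\ldots,a_{m+1},\ldots,a_2,a_1)$ and $J_O := O(a_1,\ldots,a_m,\ldots,a_2,a_1)$, both of odd internal degree $n=2m-1$. For odd $n$ the theorem reads
\[
2\lss(\tilde J) = -\sum_{v\in U(J)}\ss(\tilde\delta_v(\tilde J)) - \sum_{v\in U^-(J)}\ss(\tilde\delta_{v\,r(v)}(\tilde J)),
\]
so it suffices, for each of $J_T$ and $J_O$, to identify the line symmetry $r$, compute $\tilde\delta_v(\tilde J)$ at each univalent vertex $v$, and compute $\tilde\delta_{v\,r(v)}(\tilde J)$ for each $v\in U^-(J)$.

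First I would handle $J_T$. Its reflection $r$ fixes exactly one univalent vertex $v_0$, namely the middle top leg labeled $a_{m+1}$; the remaining $2m$ univalent vertices split into $m$ pairs with common labels $a_1,\ldots,a_m$. Since $\tilde\delta_v(\tilde J)$ has $n+1=2m$ trivalent vertices, reversing all cyclic orders produces the sign $(-1)^{n+1}=+1$, so $\tilde\delta_v(\tilde J)=\tilde\delta_{r(v)}(\tilde J)$ and each symmetric pair contributes twice to the $v$-sum. Doubling the edge at $v_0$ gives the tree $T(a_1,\ldots,a_{m+1},a_{m+1},\ldots,a_1)$, and doubling at a leaf labeled $a_i$ gives $T(a_1,\ldots,a_i,a_i,\ldots,a_{m+1},\ldots,a_1)$; these are exactly the first two lines of the $T$-identity. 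For $v\in U^-(J_T)$, attaching a $Y$-shape between the symmetric pair labeled $a_i$ closes the comb-shaped tree into a $1$-loop diagram: the case $i=1$ joins the two endpoints of the horizontal spine directly, producing $O(a_{m+1},a_m,\ldots,a_2,a_1,a_2,\ldots,a_{m-1},a_m)$, whereas for $i\geq 2$ the two subtrees $T(\ast,a_{i-1},\ldots,a_2,a_1)$ remain as the pendants $v_{i-1}$ attached to the resulting circle, yielding the diagrams in the last line.

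Next I would handle $J_O$ in the same way. The reflection fixes the unique leg labeled $a_m$, and $U^-(J_O)$ has one representative from each of the $m-1$ pairs labeled $a_1,\ldots,a_{m-1}$. Doubling at the fixed vertex produces $O(a_1,\ldots,a_m,a_m,\ldots,a_1)$ with coefficient $1$, and doubling at a leaf labeled $a_i$ produces $O(a_1,\ldots,a_i,a_i,\ldots,a_m,\ldots,a_1)$ with coefficient $2$ by the same $\tilde\delta_v=\tilde\delta_{r(v)}$ argument; together they account for the first two lines of the $O$-identity. Connecting a symmetric pair of legs of the wheel by a $Y$-shape splits the circle into the two arcs running in opposite directions between the two $a_i$-legs, joined by the new edge carrying the central label $a_i$; this is precisely the theta-diagram $\theta(a_{i-1},\ldots,a_1,a_1,\ldots,a_{i-1};\,a_i;\,a_{i+1},\ldots,a_m,\ldots,a_{i+1})$ in the last line.

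The main obstacle is sign bookkeeping. The definition of $\tilde\delta_v$ and $\tilde\delta_{v\,r(v)}$ carries a factor $(-1)^k$ from the number of barred labels present in the good lift, and drawing the output diagram in the canonical $T$-, $O$-, or $\theta$-form may force one further AS-relation application at the newly created trivalent vertex to normalize its cyclic order. These two sources of sign combine into the explicit $\pm$ in the statement, and I would determine them by fixing once and for all the good lift in which the leaves on one side of the symmetry line get the smaller subscripts and are assigned unbarred labels, and then reading off the sign from the local picture near each created trivalent vertex. Once this has been done, the identities follow directly from Theorem~\ref{thm:symrelation}(\ref{item:odd}) and no further clasper calculus is needed.
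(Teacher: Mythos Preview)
Your proposal is correct and follows exactly the paper's approach: the paper states the corollary is ``obtained by applying Theorem~\ref{thm:symrelation}(\ref{item:odd}) to these Jacobi diagrams,'' and you have spelled out precisely that specialization, correctly identifying the fixed univalent vertex, the pairing $\tilde\delta_v(\tilde J)=\tilde\delta_{r(v)}(\tilde J)$ via $(-1)^{n+1}=+1$, and the shape of each $\tilde\delta_{v\,r(v)}$ term. One very minor point: a good lift by definition carries only unbarred labels, so the factor $(-1)^k$ is always $+1$; the genuine sign ambiguity in the $\pm$ comes solely from which vertex of each symmetric pair receives the lower subscript (cf.\ the remark $\tilde\delta_{v\,r(v)}=-\tilde\delta_{r(v)\,v}$ following Theorem~\ref{thm:symrelation}), together with the AS normalization you mention.
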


Corollary~\ref{cor:2torsion} implies that most of the images of symmetric Jacobi diagrams of $\ideg=2m-1$ with $0$-loop or $1$-loop under $\ss$ do not lift to torsion elements in $Y_{2m-1}\I\C/Y_{2m+1}$.
Moreover, in the case $m=2$, we prove that the abelian group $Y_3\I\C/Y_5$ is torsion-free.

In the following proof, we use Proposition~\ref{prop:A4}: $\A_4^c \cong Y_4\I\C/Y_5$ (to be shown later).

\begin{proof}[Proof of Theorem~\textup{\ref{thm:Y3C/Y5}}]
Consider the exact sequence of abelian groups
\[
0 \to Y_4\I\C/Y_5 \to Y_3\I\C/Y_5 \to Y_3\I\C/Y_4 \to 0.
\]
By \cite[Theorem~1.7]{NSS21}, we have $\tor(Y_3\I\C/Y_4) \cong (L_3 \oplus S^2(H))\otimes \Z/2\Z$, where $L_n$ denotes the degree $n$ part of the free Lie algebra on $H$.
Thus, the image of the set
\[
X=
\{T(a,b,c,b,a), T(b,c,a,c,b), T(a,b,b,b,a) \mid a \prec b \prec c\}
\cup
\{O(a',b',a') \mid a'\preceq b'\}
\]
under the map $\ss$ is a basis of $\tor(Y_3\I\C/Y_4)$ over $\Z/2\Z$, where $\prec$ is a total order on the set $\{1^\pm,\dots,g^\pm\}$.
For each $x \in X$, Example~\ref{ex:deg3} (or Corollary~\ref{cor:2torsion}) gives an element $x' \in \A_4^c$ satisfying $\ss(x') = 2\tilde{\ss}(\tilde{x}) \in Y_3\I\C/Y_5$, where $\tilde{x} \in \widetilde{\J}^c_3$ is a good lift of $x$.
If the set $\{x' \mid x \in X\}$ extends to a basis of $\A_4^c$, then $Y_3\I\C/Y_5$ is torsion-free.
By focusing on 0-loop Jacobi diagrams with three pairs of identical labels in Corollary~\ref{cor:2torsion}, it suffices to see that the set
\[
\{T(a,b,c,c,b,a), T(b,c,a,a,c,b) \mid a \prec b \prec c\} \cup \{O(a,b,b,b), O(a',b',b',a') \mid a \prec b,\ a'\preceq b'\}
\]
extends to a basis of $\A_4^c = \bigoplus_{l=0}^3 \A_{4,l}^c$.
This is shown by Example~\ref{ex:A40} and \cite[Proposition~5.2]{NSS21}.
\end{proof}

\begin{remark}
\label{rem:Y3C/Y5}
Theorem~\ref{thm:Y3C/Y5} and the above exact sequence completely determine $Y_3\I\C/Y_5$.
Indeed, $Y_4\I\C/Y_5$ and $(Y_3\I\C/Y_4)/{\tor(Y_3\I\C/Y_4)}$ are computed in Proposition~\ref{prop:A4} and \cite[Theorem~1.7]{NSS21}.

Moreover, once $Y_5\I\C/Y_6$ is determined, we accomplish the determination of the abelian group $Y_3\I\C/Y_6$.
To see it, we use the exact sequence
\begin{align*}
 0 \to Y_5\I\C/Y_6 \to Y_3\I\C/Y_6 \to Y_3\I\C/Y_5 \to 0.
\end{align*}
Since this sequence splits by Theorem~\ref{thm:Y3C/Y5}, one can determine $Y_3\I\C/Y_6$.
In particular, the inclusion induces an isomorphism $\tor(Y_5\I\C/Y_6) \xrightarrow{\cong} \tor(Y_3\I\C/Y_6)$.
\end{remark}

We end this section by discussing the homology cobordism group $\I\H$ of homology cylinders (see \cite[Section~2.1]{NSS21} for example).
Recall that there is a natural projection $\I\C \twoheadrightarrow \I\H$, and let $Y_n\I\H$ denote the image of $Y_n\I\C$.
Also, $[M], [N] \in \I\H$ are said to be \emph{$Y_n$-equivalent} if there is a sequence $M = M_1, M_2, \dots, M_r = N$ in $\I\C$ such that $M_i$ and $M_{i+1}$ are $Y_n$-equivalent or homology cobordant for $i=1,2,\dots,r-1$.
Then we obtain an analogous sequence
\begin{align}
0 \to Y_4\I\H/Y_5 \to Y_3\I\H/Y_5 \to Y_3\I\H/Y_4 \to 0 \label{eq:Y3H/Y5}
\end{align}
to the sequence in the proof of Theorem~\ref{thm:Y3C/Y5}.
However, this is not necessarily exact at the middle since the inclusion $Y_n\I\H \subset \{[M] \in \I\H \mid [M]\sim_{Y_n}[\Sigma_{g,1}\times[-1,1]]\}$ might be proper.

\begin{theorem}
\label{thm:Y3H/Y5}
The sequence \eqref{eq:Y3H/Y5} is exact and the module $Y_3\I\H/Y_5$ is torsion-free.
\end{theorem}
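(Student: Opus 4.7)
My plan is to adapt the argument for Theorem~\ref{thm:Y3C/Y5} to the homology cobordism setting. I would begin with the commutative diagram
\[
\xymatrix{
0 \ar[r] & Y_4\I\C/Y_5 \ar[r] \ar@{->>}[d] & Y_3\I\C/Y_5 \ar[r] \ar@{->>}[d] & Y_3\I\C/Y_4 \ar[r] \ar@{->>}[d] & 0 \\
         & Y_4\I\H/Y_5 \ar[r]              & Y_3\I\H/Y_5 \ar[r]              & Y_3\I\H/Y_4 \ar[r]              & 0
}
\]
whose top row is exact by Theorem~\ref{thm:Y3C/Y5} and Remark~\ref{rem:Y3C/Y5} and whose vertical maps, induced by $\I\C \twoheadrightarrow \I\H$, are surjective. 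Left-injectivity of the bottom row, its right-surjectivity, and the vanishing of its composite follow immediately from the definitions.

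For exactness at the middle of \eqref{eq:Y3H/Y5}, take $[M] \in Y_3\I\H/Y_5$ mapping to $0$ in $Y_3\I\H/Y_4$. By definition of the $Y_4$-equivalence on $\I\H$, there is a chain $M = N_0, N_1, \dots, N_r$ in $\I\C$ with $[N_r] = [\Sigma_{g,1} \times [-1,1]]$ in $\I\H$, each consecutive pair either $Y_4$-equivalent in $\I\C$ or homology cobordant. All classes $[N_i]$ lie in $Y_3\I\H$; after choosing representatives in $Y_3\I\C$ when necessary, each step contributes to $[N_i] - [N_{i+1}]$ in the abelian group $Y_3\I\H/Y_5$ either zero (for a homology cobordism, the identity in $\I\H$) or a lift of $\ss(G) \in Y_4\I\C/Y_5$ for a degree-$4$ surgery $N_{i+1} = (N_i)_G$. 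Both alternatives land in the image of $Y_4\I\H/Y_5$, so telescoping places $[M]$ there.

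For torsion-freeness I mimic the proof of Theorem~\ref{thm:Y3C/Y5}. Levine's result \cite{Lev01} kills the loop parts $\A_{n,\geq 1}^c$ in $Y_n\I\H/Y_{n+1}$; combined with Proposition~\ref{prop:A4} (identifying $\A_4^c \cong Y_4\I\C/Y_5$) this yields $Y_4\I\H/Y_5 \cong \A_{4,0}^c$, a torsion-free group. The exact sequence just established then embeds $\tor(Y_3\I\H/Y_5)$ into $\tor(Y_3\I\H/Y_4) \cong L_3 \otimes \Z/2\Z$, which by \cite[Theorem~1.7]{NSS21} (after killing loops) is generated by the $\ss$-images of the $0$-loop trees $T(a,b,c,b,a)$, $T(b,c,a,c,b)$, $T(a,b,b,b,a)$. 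For each such generator $x$, Corollary~\ref{cor:2torsion} produces a good lift $\tilde{x}$ and $x' \in \A_4^c$ with $\ss(x') = 2\tilde{\ss}(\tilde{x})$ in $Y_3\I\C/Y_5$; the $1$-loop part of $x'$ vanishes in $Y_3\I\H/Y_5$, leaving $2\tilde{\ss}(\tilde{x}) = \ss(x'_0)$ with $x'_0 \in \A_{4,0}^c$ whose reduction modulo $2$ is the corresponding doubled tree $T(a,b,c,c,b,a)$, $T(b,c,a,a,c,b)$, or $T(a,b,b,b,b,a)$.

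The main obstacle is verifying that the $\Z/2\Z$-reductions $\overline{\ss(x'_0)}$ are linearly independent in $\A_{4,0}^c/2\A_{4,0}^c$; equivalently, that the corresponding doubled trees extend to a basis of the free abelian group $\A_{4,0}^c$. This is obtained by restricting to the $0$-loop summand the basis-extension verification already carried out in the proof of Theorem~\ref{thm:Y3C/Y5}. Once this is in hand, the same diagram chase applies: any $2$-torsion element $z$ of $Y_3\I\H/Y_5$, after subtracting the appropriate lifts $\tilde{\ss}(\tilde{x})$ of its image in $\tor(Y_3\I\H/Y_4)$, yields an element of the torsion-free group $\A_{4,0}^c$ whose double is a $\Z/2\Z$-linear combination of the doubled trees; the basis property forces all coefficients to vanish, so $z = 0$.
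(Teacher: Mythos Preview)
Your exactness argument has a genuine gap at exactly the point the paper warns about just before the statement. You assert that ``all classes $[N_i]$ lie in $Y_3\I\H$'' and that after a homology cobordism step you may ``choose representatives in $Y_3\I\C$ when necessary''. But once you replace $N_{i+1}$ by some $N_{i+1}' \in Y_3\I\C$ with $[N_{i+1}']=[N_{i+1}]$, the next $Y_4$-equivalence $N_{i+1}\sim_{Y_4}N_{i+2}$ in the chain is a relation in $\I\C$ between $N_{i+1}$ and $N_{i+2}$, not between $N_{i+1}'$ and $N_{i+2}$; there is no mechanism to transport the degree-$4$ clasper $G\subset N_{i+1}$ across a homology cobordism. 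So the difference $[N_{i+1}']-[N_{i+2}]$ is not visibly in the image of $Y_4\I\H/Y_5$, and the telescoping collapses. Equivalently, you are implicitly using that $\{[M]\in\I\H \mid [M]\sim_{Y_4} 1\}\cap Y_3\I\H$ equals the image of $Y_4\I\H$ in $Y_3\I\H/Y_5$, which is precisely what exactness asserts and cannot be obtained by this formal chain argument alone.

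The paper does not attempt a direct element chase. Instead it invokes \cite[Corollaries~50 and 51]{CST16} to identify $Y_4\I\H/Y_5\cong\A_{4,0}^c$ and $Y_3\I\H/Y_4\cong\A_{3,0}^c/\Im\Delta_{1,0}$, and inserts an intermediate row
\[
0 \to \A_{4,0}^c \to \frac{Y_3\I\C/Y_5}{\ss(\A_{4,\geq 1}^c)+\tilde{\ss}(\Z\widetilde{\J}_{3,\geq 1}^c)} \to \A_{3,0}^c/\Im\Delta_{1,0} \to 0
\]
between the $\I\C$-row and the $\I\H$-row. Exactness of this middle row follows by diagram chasing from the top (exact) row, and the outer vertical maps to the $\I\H$-row are the isomorphisms from \cite{CST16}; Levine's theorem supplies the middle vertical surjection. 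Exactness of \eqref{eq:Y3H/Y5} then follows. Note in particular that your identification $Y_4\I\H/Y_5\cong\A_{4,0}^c$ is not a consequence of Levine's vanishing result plus Proposition~\ref{prop:A4}: Levine only gives a surjection $\A_{4,0}^c\twoheadrightarrow Y_4\I\H/Y_5$, and injectivity is the content of \cite[Corollary~50]{CST16}. The same remark applies to your description of $\tor(Y_3\I\H/Y_4)$.

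Your torsion-freeness argument, once the exact sequence and these identifications are in place, is essentially the paper's: it says ``the latter half of the statement is proved almost in the same way as Theorem~\ref{thm:Y3C/Y5}'', and your reduction to the $0$-loop basis-extension in $\A_{4,0}^c$ is exactly that. So the second half of your proposal is fine modulo the missing citations; the first half needs to be replaced by the \cite{CST16}-based diagram argument.
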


\begin{proof}
In \cite[Corollary~51]{CST16}, it is shown that $\Ker(\ss\colon \A_{2n+1,0}^c \to Y_{2n+1}\I\H/Y_{2n+2})$ coincides with $\Im(\Delta_{n,0}\colon \A_{n,0}^c \to \A_{2n+1,0}^c)$ when $n$ is odd.
Here the map $\Delta_{n,0}$ is defined in \cite[Definition~3.4]{NSS21}, which is denoted by $\Delta_{2n+1}$ in \cite[Definition~4.3]{CST12W}.
Also, $\ss\colon \A_{2n,0}^c \to Y_{2n}\I\H/Y_{2n+1}$ is an isomorphism for all $n$ (\cite[Corollary~50]{CST16}).
Then, we have a commutative diagram
\[
\xymatrix{
0 \ar[r] & \A_{4}^c \ar[r]^-{\ss} \ar@{->>}[d] & Y_3\I\C/Y_5 \ar[r] \ar@{->>}[d] & Y_3\I\C/Y_4 \ar[r] \ar@{<-_)}[d]_-{\ss} & 0 \\
0 \ar[r] & \A_{4,0}^c \ar[r]^-{\ss} \ar[d]^-{\ss}_-{\cong} & \frac{Y_3\I\C/Y_5}{\ss(\A_{4,\geq 1}^c)+\tilde{\ss}(\Z\widetilde{\J}_{3,\geq 1}^c)} \ar@{-->}[r] \ar@{-->>}[d] & \A_{3,0}^c/\Im\Delta_{1,0} \ar[r] \ar[d]^-{\ss}_-{\cong} & 0 \\
0 \ar[r] & Y_4\I\H/Y_5 \ar[r] & Y_3\I\H/Y_5 \ar[r] & Y_3\I\H/Y_4 \ar[r] & 0.
}\]
Here the vertical dashed arrow is induced by \cite[Theorem~2]{Lev01}.
The top and middle rows are exact by Proposition~\ref{prop:A4} and diagram chasing, respectively.
Then the bottom row is also exact.

Now, the latter half of the statement is proved almost in the same way as Theorem~\ref{thm:Y3C/Y5}.
\end{proof}

\section{Structures of modules of Jacobi diagrams}
\label{sec:Jacobi_diagrams}

\subsection{Maps $\bu$ and $\bd$ between Jacobi diagrams}
\label{subsec:bu_bd}

We introduce some maps between modules of Jacobi diagrams, which enable us to understand well the value of the invariant $\bar{z}_{2m-1}$ for $\ss(O(a_1, \dots, a_m, \dots, a_1))$ in Section~\ref{sec:Ker_OneLoop}.
Also, we prove Theorem~\ref{thm:bu_isom} in this subsection.

\begin{definition}[\cite{CDM12}]
The map $\bu \colon \A_{n,l}^c \to \A_{n+2,l+1}^c$ is defined by blowing up a trivalent vertex, that is, by replacing a trivalent vertex of a Jacobi diagram with a loop as in Figure~\ref{fig:bu}.
\end{definition}

The map $\bu$ is well-defined, namely independent of the choice of a trivalent vertex.
Indeed, since $J \in \A_n^c$ is connected, it suffices to compare the results of blowing up at adjacent trivalent vertices.
One can directly check it by the AS and IHX relations.

The map $\bu$ is called the insertion of a triangle into a vertex in \cite[Section~7.2.2]{CDM12} (only for trivalent graphs).
Also, in \cite[Remark~7.10]{CDM12}, it is mentioned that the insertion of a bubble into an edge, which equals $2\bu$, is not necessarily injective.

Recall that $\J_{n}^c$ denotes the set of connected Jacobi diagrams of $\ideg=n$.
Let $\J_{n,2}^c$ be the subset of $\J_{n}^c$ consisting of Jacobi diagrams whose first Betti numbers are two, and let $\R$ be the submodule of the free $\Z$-module $\Z\J_{n,2}^c$ generated by the AS, IHX, and self-loop relators.
Then $\A_{n,2}^c = \Z\J_{n,2}^c/\R$.
Here note that we call an element of $\J_{n,2}^c$ a Jacobi diagram which is not yet an equivalence class.
The \emph{spine} of a Jacobi diagram $J$ is defined to be the graph obtained by collapsing edges incident to univalent vertices until there is no univalent vertex.
When $J \in \J_{n,2}^c$, its spine is either the theta graph or eyeglass graph.
Let $\Theta_n$ be the subset consisting of $J \in \J_{n,2}^c$ whose spine is the theta graph, and let $\R_\Theta$ be the submodule of $\Z\Theta_n$ generated by the AS and IHX relators among diagrams in $\Theta_n$.

Let us define a map $f \colon \Z\J_{n,2}^c/\AS \twoheadrightarrow \Z\Theta_n/\R_\Theta$.
For $J \in \Theta_n$, we simply set $f(J)=J$.
For $J \notin \Theta_n$, under the AS relation, we may assume $J$ is of the form
\begin{align}
\label{eq:eyeglass}
\begin{tikzpicture}[scale=0.3, baseline={(0,0)}, densely dashed]
 \draw (1,0) -- (1,1) node[anchor=south] {$t_1$};
 \node at (2.5,1) {$\cdots$};
 \draw (4,0) -- (4,1) node[anchor=south] {$t_r$};
 \draw (0,0) -- (5,0);
\begin{scope}[xshift=-2cm]
 \draw (0,0) circle [radius=2];
 \draw (135:2) -- (135:3) node[anchor=east] {$\ast$};
 \draw (-135:2) -- (-135:3) node[anchor=east] {$\ast$};
 \node at (-3,0.5) {$\vdots$};
\end{scope}
\begin{scope}[xshift=7cm]
 \draw (0,0) circle [radius=2];
 \draw (45:2) -- (45:3) node[anchor=west] {$\ast$};
 \draw (-45:2) -- (-45:3) node[anchor=west] {$\ast$};
 \node at (3,0.5) {$\vdots$};
\end{scope}
\end{tikzpicture}\ ,
\end{align}
where $t_1,\dots,t_r$ and $\ast$'s are (rooted) trees.
Note that $\ast$'s are not important to define $f$.
In this case, $f(J)$ is defined by
\begin{align}
\label{eq:eyeglass_to_theta}
f(J)=
\sum
\begin{tikzpicture}[scale=0.3, baseline={(0,0)}, densely dashed]
 \draw (0,-2) -- (0,2);
 \draw (0,2) -- (5,2);
 \draw (1,2) -- (1,3) node[anchor=south] {$t_{a_1}$};
 \node at (2.5,3) {$\cdots$};
 \draw (4,2) -- (4,3) node[anchor=south] {$t_{a_p}$};
 \draw (0,-2) -- (5,-2);
 \draw (1,-2) -- (1,-1) node[anchor=south] {$t_{b_1}$};
 \node at (2.5,-1) {$\cdots$};
 \draw (4,-2) -- (4,-1) node[anchor=south] {$t_{b_q}$};
\begin{scope}[xshift=0cm]
 \draw (0,2) arc (90:270:2);
 \draw (135:2) -- (135:3) node[anchor=east] {$\ast$};
 \draw (-135:2) -- (-135:3) node[anchor=east] {$\ast$};
 \node at (-3,0.5) {$\vdots$};
\end{scope}
\begin{scope}[xshift=5cm]
 \draw (0,2) arc (90:-90:2);
 \draw (45:2) -- (45:3) node[anchor=west] {$\ast$};
 \draw (-45:2) -- (-45:3) node[anchor=west] {$\ast$};
 \node at (3,0.5) {$\vdots$};
\end{scope}
\end{tikzpicture}
-\sum
\begin{tikzpicture}[scale=0.3, baseline={(0,0)}, densely dashed]
 \draw (-2,-2) -- (0,2);
 \draw (-2,2) -- (0,-2);
 \draw (0,-2) -- (0,2);
 \draw (0,2) -- (5,2);
 \draw (1,2) -- (1,3) node[anchor=south] {$t_{a_1}$};
 \node at (2.5,3) {$\cdots$};
 \draw (4,2) -- (4,3) node[anchor=south] {$t_{a_p}$};
 \draw (0,-2) -- (5,-2);
 \draw (1,-2) -- (1,-1) node[anchor=south] {$t_{b_1}$};
 \node at (2.5,-1) {$\cdots$};
 \draw (4,-2) -- (4,-1) node[anchor=south] {$t_{b_q}$};
\begin{scope}[xshift=-2cm]
 \draw (0,2) arc (90:270:2);
 \draw (135:2) -- (135:3) node[anchor=east] {$\ast$};
 \draw (-135:2) -- (-135:3) node[anchor=east] {$\ast$};
 \node at (-3,0.5) {$\vdots$};
\end{scope}
\begin{scope}[xshift=5cm]
 \draw (0,2) arc (90:-90:2);
 \draw (45:2) -- (45:3) node[anchor=west] {$\ast$};
 \draw (-45:2) -- (-45:3) node[anchor=west] {$\ast$};
 \node at (3,0.5) {$\vdots$};
\end{scope}
\end{tikzpicture}\ ,
\end{align}
where the sums are taken over all $(p,q)$-shuffles for $p,q \geq 0$ with $p+q=r$, that is, $a_1<\dots<a_p$ and $b_1<\dots<b_q$.
Note that each summation has $2^r$ terms.

\begin{proposition}\label{prop:eyeglass_to_theta}
$f$ is well-defined and induces an isomorphism $\A_{n,2}^c \to \Z\Theta_n/\R_\Theta$, namely all relations among $2$-loop Jacobi diagrams arise from the theta graph.
Moreover, the inverse is the map induced by the inclusion $\Z\Theta_n \hookrightarrow \Z\J_{n,2}^{c}$.
\end{proposition}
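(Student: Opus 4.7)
The plan is to show that $f$ and the inclusion-induced map $\iota\colon \Z\Theta_n/\R_\Theta \to \A_{n,2}^c$ are mutually inverse. Since $f(J) = J$ for $J \in \Theta_n$, the composition $f \circ \iota$ equals the identity on $\Z\Theta_n/\R_\Theta$ by definition. The two substantive tasks are therefore (I) showing $\iota \circ f = \id$ on $\A_{n,2}^c$, i.e., $J \equiv \iota(f(J)) \pmod{\R}$ for every eyeglass representative $J$; and (II) showing $f$ descends from $\Z\J_{n,2}^c$ to $\A_{n,2}^c$.

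The heart of the argument is (I), which I would prove by induction on the length $r$ of the bridge in the canonical eyeglass representative (\ref{eq:eyeglass}). For the base case $r = 0$ (single bridge edge with no trees $t_i$), applying IHX at an edge of the left loop adjacent to the bridge yields three terms: two are theta graphs matching the two summands of (\ref{eq:eyeglass_to_theta}) with $p = q = 0$, and the third contains a separating edge and vanishes after reduction. For the inductive step, apply IHX at the bridge trivalent vertex where $t_r$ attaches; one of the three terms reproduces an eyeglass with bridge length $r-1$ (handled by induction), while the remaining two correspond to inserting $t_r$ at the last position of either the top or the bottom strand of the theta expansion. These two options match the recursive structure of the shuffle sum in (\ref{eq:eyeglass_to_theta}), obtained by splitting each shuffle according to whether $t_r$ lies on the top or bottom.

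For (II), the self-loop relation is subsumed by IHX for connected diagrams of $\ideg \geq 2$, so only AS and IHX require verification. The AS relation at a trivalent vertex corresponds to antisymmetry of the shuffle sum under reversal of a cyclic order, which is a direct inspection of the definition. IHX applied to an edge inside a tree $t_i$ or an $\ast$-tree is local and commutes with the shuffle-sum construction. The remaining and most delicate case is IHX applied to a spine edge of an eyeglass, where the underlying bridge topology changes nontrivially; here one reuses (I) to reduce the check to an identity among shuffle sums that is formally equivalent to the recursion in the inductive step of (I).

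The apparent circularity — (II) invokes (I), but (I) requires $f$ to be defined — is resolved by carrying out (I) at the level of $\Z\J_{n,2}^c$, proving $J - \iota(f(J)) \in \R$ as an equality in the free module rather than in the quotient. The main obstacle is then the combinatorial bookkeeping in (I): tracking the signs arising from the AS relation at the junction vertices between the bridge and the two loops, and verifying that the shuffle sum correctly aggregates all contributions from the inductive expansion (including the "crossed" second summand in (\ref{eq:eyeglass_to_theta})). Once this is organized, (II) follows essentially as a corollary, since the existence of a left inverse forces $\iota$ to be injective, so that any element of $\R$ sent by $f$ into the kernel of $\iota$ must already lie in $\R_\Theta$.
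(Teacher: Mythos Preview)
Your overall strategy---show $\iota\circ f=\id$ via induction on bridge length, then verify $f$ descends---is sound and organizes the argument differently from the paper, which instead spends most of its effort on the well-definedness of $f$ under the two planar presentations of an eyeglass (the $180^\circ$ rotation) and then dispatches three explicit IHX relators $r_1,r_2,r_3$ directly. Your inductive step for (I) is a reasonable replacement for the paper's implicit ``the inverse is the inclusion by definition of $f$.''

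However, the final paragraph contains a genuine logical gap. You claim that (II) follows once (I) is done because ``the existence of a left inverse forces $\iota$ to be injective.'' But the left inverse you have in mind is the induced map $\bar f\colon \A_{n,2}^c\to \Z\Theta_n/\R_\Theta$, and this map \emph{exists} only after (II) is established. Working at the level of free modules, (I) gives $\iota(f(r))\in\R$ for every $r\in\R$, hence $f(r)\in\R\cap\Z\Theta_n$; but you need $f(r)\in\R_\Theta$, and the equality $\R\cap\Z\Theta_n=\R_\Theta$ is precisely the injectivity of $\bar\iota$ that you are trying to prove. So (II) must be checked independently, as the paper does.

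A second, related point: the AS check in (II) is not ``direct inspection.'' The formula \eqref{eq:eyeglass_to_theta} depends on a choice of planar presentation of the eyeglass, and an abstract eyeglass diagram has two such presentations differing by the $180^\circ$ rotation together with $r$ applications of AS along the bridge. Verifying that $f$ gives the same answer on both is exactly the paper's main technical computation (equations \eqref{eq:left_ver}--\eqref{eq:left_to_right} and the pairing-cancellation argument). Your outline does not account for this; it must be done either as a single global check (as in the paper) or by carefully tracking how a single bridge-vertex AS flip interacts with the shuffle sum, which is not immediate.
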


\begin{proof}
Let $J \in \J_{n,2}^c$ be a Jacobi diagram of the form \eqref{eq:eyeglass}.
By a 180 degree rotation and the AS relation, $J$ has another expression $\check{J}$ as \eqref{eq:eyeglass}, with sign $(-1)^r$.
Thus, we have to check $f(J)=f((-1)^r\check{J})$.
Applying a 180 degree rotation and the AS relation to each term of $f((-1)^r\check{J})$, we obtain
\begin{align}
\label{eq:left_ver}
\sum
\begin{tikzpicture}[scale=0.3, baseline={(0,0)}, densely dashed]
 \draw (5,-2) -- (5,2);
 \draw (0,2) -- (5,2);
 \draw (1,2) -- (1,3) node[anchor=south] {$t_{a_1}$};
 \node at (2.5,3) {$\cdots$};
 \draw (4,2) -- (4,3) node[anchor=south] {$t_{a_p}$};
 \draw (0,-2) -- (5,-2);
 \draw (1,-2) -- (1,-1) node[anchor=south] {$t_{b_1}$};
 \node at (2.5,-1) {$\cdots$};
 \draw (4,-2) -- (4,-1) node[anchor=south] {$t_{b_q}$};
\begin{scope}[xshift=0cm]
 \draw (0,2) arc (90:270:2);
 \draw (135:2) -- (135:3) node[anchor=east] {$\ast$};
 \draw (-135:2) -- (-135:3) node[anchor=east] {$\ast$};
 \node at (-3,0.5) {$\vdots$};
\end{scope}
\begin{scope}[xshift=5cm]
 \draw (0,2) arc (90:-90:2);
 \draw (45:2) -- (45:3) node[anchor=west] {$\ast$};
 \draw (-45:2) -- (-45:3) node[anchor=west] {$\ast$};
 \node at (3,0.5) {$\vdots$};
\end{scope}
\end{tikzpicture}
-\sum
\begin{tikzpicture}[scale=0.3, baseline={(0,0)}, densely dashed]
 \draw (5,-2) -- (7,2);
 \draw (5,2) -- (7,-2);
 \draw (5,-2) -- (5,2);
 \draw (0,2) -- (5,2);
 \draw (1,2) -- (1,3) node[anchor=south] {$t_{a_1}$};
 \node at (2.5,3) {$\cdots$};
 \draw (4,2) -- (4,3) node[anchor=south] {$t_{a_p}$};
 \draw (0,-2) -- (5,-2);
 \draw (1,-2) -- (1,-1) node[anchor=south] {$t_{b_1}$};
 \node at (2.5,-1) {$\cdots$};
 \draw (4,-2) -- (4,-1) node[anchor=south] {$t_{b_q}$};
\begin{scope}[xshift=0cm]
 \draw (0,2) arc (90:270:2);
 \draw (135:2) -- (135:3) node[anchor=east] {$\ast$};
 \draw (-135:2) -- (-135:3) node[anchor=east] {$\ast$};
 \node at (-3,0.5) {$\vdots$};
\end{scope}
\begin{scope}[xshift=7cm]
 \draw (0,2) arc (90:-90:2);
 \draw (45:2) -- (45:3) node[anchor=west] {$\ast$};
 \draw (-45:2) -- (-45:3) node[anchor=west] {$\ast$};
 \node at (3,0.5) {$\vdots$};
\end{scope}
\end{tikzpicture}\ ,
\end{align}
where the sums are again taken over all $(p,q)$-shuffles.
Let us show that the first (resp.\ second) sum above is equal to the first (resp.\ second) sum in the definition of $f$.
We discuss only the first sum, and the proof for the second sum is almost the same.
In the following proof, it is convenient to regard the IHX relation as Kirchhoff's law.
Now, by the IHX relation, we have
\begin{align}
\label{eq:left_to_right}
\begin{tikzpicture}[scale=0.3, baseline={(0,0)}, densely dashed]
 \draw (5,-2) -- (5,2);
 \draw (0,2) -- (5,2);
 \draw (1,2) -- (1,3) node[anchor=south] {$t_{a_1}$};
 \node at (2.5,3) {$\cdots$};
 \draw (4,2) -- (4,3) node[anchor=south] {$t_{a_p}$};
 \draw (0,-2) -- (5,-2);
 \draw (1,-2) -- (1,-1) node[anchor=south] {$t_{b_1}$};
 \node at (2.5,-1) {$\cdots$};
 \draw (4,-2) -- (4,-1) node[anchor=south] {$t_{b_q}$};
\begin{scope}[xshift=0cm]
 \draw (0,2) arc (90:270:2);
 \draw (135:2) -- (135:3) node[anchor=east] {$\ast$};
 \draw (-135:2) -- (-135:3) node[anchor=east] {$\ast$};
 \node at (-3,0.5) {$\vdots$};
\end{scope}
\begin{scope}[xshift=5cm]
 \draw (0,2) arc (90:-90:2);
 \draw (45:2) -- (45:3) node[anchor=west] {$\ast$};
 \draw (-45:2) -- (-45:3) node[anchor=west] {$\ast$};
 \node at (3,0.5) {$\vdots$};
\end{scope}
\end{tikzpicture}
=\sum
\begin{tikzpicture}[scale=0.3, baseline={(0,0)}, densely dashed]
 \draw (0,-4) -- (0,4);
 \draw (-2,-4) -- (4,-4);
 \draw (-2,4) -- (4,4);
 \draw (1,4) -- (1,5) node[anchor=south] {$t_{a''_1}$};
 \node at (2.5,5) {$\cdots$};
 \draw (4,4) -- (4,5) node[anchor=south] {$t_{a''_{p''}}$};
 \draw (0,3) -- (1,3) node[anchor=west] {$t_{a'_1}$};
 \node at (1,2.5) {$\vdots$};
 \draw (0,1) -- (1,1) node[anchor=west] {$t_{a'_{p'}}$};
 \draw (0,-1) -- (-1,-1) node[anchor=east] {$t_{b'_{q'}}$};
 \node at (-1,-1.5) {$\vdots$};
 \draw (0,-3) -- (-1,-3) node[anchor=east] {$t_{b'_{1}}$};
 \draw (1,-4) -- (1,-3) node[anchor=south] {$t_{b''_1}$};
 \node at (2.5,-3) {$\cdots$};
 \draw (4,-4) -- (4,-3) node[anchor=south] {$t_{b''_{q''}}$};
\begin{scope}[xshift=-2cm]
 \draw (0,4) arc (90:270:4);
 \draw (135:4) -- (135:5) node[anchor=east] {$\ast$};
 \draw (-135:4) -- (-135:5) node[anchor=east] {$\ast$};
 \node at (-5,0.5) {$\vdots$};
\end{scope}
\begin{scope}[xshift=4cm]
 \draw (0,4) arc (90:-90:4);
 \draw (45:4) -- (45:5) node[anchor=west] {$\ast$};
 \draw (-45:4) -- (-45:5) node[anchor=west] {$\ast$};
 \node at (5,0.5) {$\vdots$};
\end{scope}
\end{tikzpicture}\ ,
\end{align}
where the sum is taken over all $(p',p'')$-shuffles and $(q',q'')$-shuffles for $p'+p''=p$ and $q'+q''=q$.
The terms with $p'=q'=0$ appear in the definition of $f(J)$ as well.
Therefore, one should check that the rest of the terms cancel with each other when we expand each term of the first sum in \eqref{eq:left_ver} as \eqref{eq:left_to_right}.
To see it, let $m = \max\{a'_1,\dots,a'_{p'},b'_1,\dots,b'_{q'}\}$ and focus on the tree $t_m$.
Each Jacobi diagram $J'$ in the summation pairs with another one which is identical with $J'$ except that the edge labeled by $t_m$ is attached on the opposite side of the vertical edge of the spine.
Such pairs are canceled by the AS relation.

In the rest of the proof, we show that $f$ preserves the IHX and self-loop relations.
Any Jacobi diagram $J \in \J_{n,2}^c$ with self-loop is, under the AS relation, of the from \eqref{eq:eyeglass} such that the left or right circle has a single trivalent vertex.
Then $f(J)=0$ by the definition and well-definedness of $f$.
When the IHX relation is applied except the central edge of the eyeglass graph, it is obviously preserved by $f$.
Therefore, what we should consider are the following three cases:
\begin{align*}
r_1 &=
\begin{tikzpicture}[scale=0.3, baseline={(0,-0.1)}, densely dashed]
 \draw (0,-2) -- (0,2);
 \draw (0,0) circle [radius=2];
 \draw (135:2) -- (135:3) node[anchor=east] {$\ast$};
 \draw (-135:2) -- (-135:3) node[anchor=east] {$\ast$};
 \node at (-3,0.5) {$\vdots$};
  \draw (45:2) -- (45:3) node[anchor=west] {$\ast$};
 \draw (-45:2) -- (-45:3) node[anchor=west] {$\ast$};
 \node at (3,0.5) {$\vdots$};
\end{tikzpicture}
-
\begin{tikzpicture}[scale=0.3, baseline={(0,-0.1)}, densely dashed]
 \draw (0,0) -- (2,0);
\begin{scope}[xshift=-2cm]
 \draw (0,0) circle [radius=2];
 \draw (135:2) -- (135:3) node[anchor=east] {$\ast$};
 \draw (-135:2) -- (-135:3) node[anchor=east] {$\ast$};
 \node at (-3,0.5) {$\vdots$};
\end{scope}
\begin{scope}[xshift=4cm]
 \draw (0,0) circle [radius=2];
 \draw (45:2) -- (45:3) node[anchor=west] {$\ast$};
 \draw (-45:2) -- (-45:3) node[anchor=west] {$\ast$};
 \node at (3,0.5) {$\vdots$};
\end{scope}
\end{tikzpicture}
+
\begin{tikzpicture}[scale=0.3, baseline={(0,-0.1)}, densely dashed]
 \draw (-1,2) -- (1,-2);
 \draw (-1,-2) -- (1,2);
 \draw (-0.5,-1) -- (0.5,-1);
\begin{scope}[xshift=-1cm]
 \draw (0,2) arc (90:270:2);
 \draw (135:2) -- (135:3) node[anchor=east] {$\ast$};
 \draw (-135:2) -- (-135:3) node[anchor=east] {$\ast$};
 \node at (-3,0.5) {$\vdots$};
\end{scope}
\begin{scope}[xshift=1cm]
 \draw (0,2) arc (90:-90:2);
 \draw (45:2) -- (45:3) node[anchor=west] {$\ast$};
 \draw (-45:2) -- (-45:3) node[anchor=west] {$\ast$};
 \node at (3,0.5) {$\vdots$};
\end{scope}
\end{tikzpicture}\ ,
\\
r_2 &=
\begin{tikzpicture}[scale=0.3, baseline={(0,-0.1)}, densely dashed]
 \node at (-1,0) {$\cdots$};
 \draw (1,0) -- (1,1) node[anchor=south] {$t$};
 \draw (0,0) -- (2,0);
\begin{scope}[xshift=4cm]
 \draw (0,0) circle [radius=2];
 \draw (45:2) -- (45:3) node[anchor=west] {$\ast$};
 \draw (-45:2) -- (-45:3) node[anchor=west] {$\ast$};
 \node at (3,0.5) {$\vdots$};
\end{scope}
\end{tikzpicture}
-
\begin{tikzpicture}[scale=0.3, baseline={(0,-0.1)}, densely dashed]
 \node at (-1,0) {$\cdots$};
 \draw (0,0) -- (2,0);
\begin{scope}[xshift=4cm]
 \draw (0,0) circle [radius=2];
 \draw (135:2) -- (135:3) node[anchor=east] {$t$};
 \draw (45:2) -- (45:3) node[anchor=west] {$\ast$};
 \draw (-45:2) -- (-45:3) node[anchor=west] {$\ast$};
 \node at (3,0.5) {$\vdots$};
\end{scope}
\end{tikzpicture}
+
\begin{tikzpicture}[scale=0.3, baseline={(0,-0.1)}, densely dashed]
 \node at (-1,0) {$\cdots$};
 \draw (0,0) -- (2,0);
\begin{scope}[xshift=4cm]
 \draw (0,0) circle [radius=2];
 \draw (-135:2) -- (-135:3) node[anchor=east] {$t$};
 \draw (45:2) -- (45:3) node[anchor=west] {$\ast$};
 \draw (-45:2) -- (-45:3) node[anchor=west] {$\ast$};
 \node at (3,0.5) {$\vdots$};
\end{scope}
\end{tikzpicture}\ ,
\\
r_3 &=
\begin{tikzpicture}[scale=0.3, baseline={(0,0)}, densely dashed]
 \node at (-3,0) {$\cdots$};
 \draw (0,1) -- (-1,2) node[anchor=south] {$t$};
 \draw (0,0) -- (0,1);
 \draw (0,1) -- (1,2) node[anchor=south] {$t'$};
 \node at (3,0) {$\cdots$};
 \draw (-2,0) -- (2,0);
\end{tikzpicture}
-
\begin{tikzpicture}[scale=0.3, baseline={(0,0)}, densely dashed]
 \node at (-3,0) {$\cdots$};
 \draw (-1,0) -- (-1,1) node[anchor=south] {$t$};
 \draw (1,0) -- (1,1) node[anchor=south] {$t'$};
 \node at (3,0) {$\cdots$};
 \draw (-2,0) -- (2,0);
\end{tikzpicture}
+
\begin{tikzpicture}[scale=0.3, baseline={(0,0)}, densely dashed]
 \node at (-3,0) {$\cdots$};
 \draw (-1,0) -- (-1,1) node[anchor=south] {$t'$};
 \draw (1,0) -- (1,1) node[anchor=south] {$t$};
 \node at (3,0) {$\cdots$};
 \draw (-2,0) -- (2,0);
\end{tikzpicture}\ .
\end{align*}
It follows from \eqref{eq:eyeglass_to_theta} that $f(r_1)=f(r_2)=0$.
To prove $f(r_3)=0$, we separate the terms in $f(r_3)$ into two cases:
(I) either $t$ or $t'$ is in the upper-side and the other is in the lower-side in \eqref{eq:eyeglass_to_theta};
(II) $t$ and $t'$ are in the same side.
All terms in (I) come from the last two terms of $r_3$, and simply cancel pairwise.
The terms in (II) coming from the last two terms of $r_3$ cancel with those coming from the first one by the IHX relation in $\R_\Theta$.

We now have a well-defined homomorphism $f\colon \A_{n,2}^c \to \Z\Theta_n/\R_\Theta$.
Here the inclusion $\Z\Theta_n \hookrightarrow \Z\J_{n,2}^{c}$ induces a map $\Z\Theta_n/\R_\Theta \to \A_{n,2}^c$, which is the inverse of $f$ by the definition of $f$.
\end{proof}

Using Proposition~\ref{prop:eyeglass_to_theta}, let us prove that $\bu\colon \A_{n-2,1}^c \to \A_{n,2}^c/\ang{\Theta_n^{\geq 1}}$ is an isomorphism.
For $J \in \Theta_n$, let $\gamma_1$, $\gamma_2$, $\gamma_3$ be paths in $J$ corresponding to the three edges of the spine of $J$.
We write $\Theta_n^{\geq 1}$ for the subset of $\Theta_n$ consisting of $J$ such that each $\gamma_j$ has at least one vertex except the endpoints.
Note that the submodule $\ang{\Theta_n^{\geq1}}$ of $\A_{n,2}^c$ defined in Section~\ref{sec:Intro} coincides with the submodule generated by $\Theta_n^{\geq 1}$ due to the AS and IHX relations.

\begin{proof}[Proof of Theorem~\textup{\ref{thm:bu_isom}}]
Let us construct the inverse of $\bu\colon \A_{n-2,1}^c \to \A_{n,2}^c/\ang{\Theta_n^{\geq 1}}$ via an isomorphism $\A_{n,2}^c/\ang{\Theta_n^{\geq 1}} \cong \Z\Theta_n/\ang{\R_\Theta \cup \Theta_n^{\geq 1}}$ coming from Proposition~\ref{prop:eyeglass_to_theta}.
We define a homomorphism $\Z\Theta_n/\ang{\R_\Theta \cup \Theta_n^{\geq 1}} \to \A_{n-2,1}^c$ by sending $J=\theta(a_1,\dots,a_p; b_1,\dots,b_q; c_1,\dots,c_r)$ ($p+q+r=n-2$ and $p\geq r\geq q$) to
\[
\begin{cases}
 0 & \text{if $q \neq 0$,} \\
 O(a_1,\dots,a_p,c_r,\dots,c_1) & \text{if $q=0$ and $r\neq 0$,} \\
 2O(a_1,a_2\dots,a_{n-2}) & \text{if $q=r=0$.}
\end{cases}
\]
This map is well-defined since the IHX relation around trivalent vertices of the spine of $J$ is preserved.
Note that when $q=0$ and $p=r$ the relation $J=(-1)^{n}\theta(c_1,\dots,c_p; ; a_1,\dots,a_p)$ is preserved as well.
By definition, this map is the inverse of $\bu$.
\end{proof}

\begin{definition}
Let $n \geq 3$.
Define the map $\bd \colon \A_{n,2}^c/\ang{\Theta_n^{\geq 1}} \to \A_{n-2,1}^c$ to be the isomorphism defined in the proof of Theorem~\ref{thm:bu_isom}.
That is, $\bd(J)$ is obtained by blowing down a circle in $J$ with three vertices.
\end{definition}

The maps $\bu$ and $\bd$ play an important role to investigate $\A_{n,2}^c$ in Section~\ref{sec:Ker_OneLoop}.
Here we give an easy application of Theorem~\ref{thm:bu_isom}, which is used in the proofs of Theorem~\ref{thm:HigherLoop} and Proposition~\ref{prop:A4}.

\begin{lemma}
\label{lem:bu_small}
The map $\bu\colon \A_{n,1}^c \to \A_{n+2,2}^c$ is an isomorphism when $n \leq 3$, and thus $\A_{4,2}^{c} \cong S^2(H)$ and $\A_{5,2}^c \cong \A_{1,0}^{c}$.
\end{lemma}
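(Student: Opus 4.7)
The plan is to deduce Lemma~\ref{lem:bu_small} directly from Theorem~\ref{thm:bu_isom}, which provides the isomorphism $\bu\colon \A_{n,1}^c \to \A_{n+2,2}^c/\ang{\Theta_{n+2}^{\geq 1}}$ for $n\geq 1$. Hence it suffices to show that the submodule $\ang{\Theta_{n+2}^{\geq 1}}$ of $\A_{n+2,2}^c$ vanishes when $n\in\{1,2,3\}$, which upgrades the quotient to the identity.

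Recall that $\Theta_{n+2}^{\geq 1}$ consists of theta diagrams $\theta(a_1,\dots,a_p;b_1,\dots,b_q;c_1,\dots,c_r)$ with $p+q+r+2=n+2$ and $p,q,r\geq 1$. The constraint $p,q,r\geq 1$ forces $p+q+r\geq 3$, so $\Theta_{n+2}^{\geq 1}=\emptyset$ when $n\in\{1,2\}$. For $n=3$, the only admissible triple is $(p,q,r)=(1,1,1)$, giving diagrams of the form $\theta(a;b;c)$, and Lemma~\ref{lem:Theta111} asserts $\theta(a;b;c)=0$ in $\A_{5,2}^c$. Thus $\ang{\Theta_{n+2}^{\geq 1}}=0$ in all three cases, and $\bu\colon \A_{n,1}^c \to \A_{n+2,2}^c$ is an isomorphism when $n\leq 3$.

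For the explicit identifications, I would compute the small-degree modules directly. The module $\A_{2,1}^c$ is generated by the diagrams $O(a,b)$; since the $\pi$-rotation of the bigon is orientation-preserving, it preserves both cyclic orders, yielding $O(a,b)=O(b,a)$, and no IHX or self-loop relation is available in this degree. Hence $\A_{2,1}^c\cong S^2(H)$, and the case $n=2$ gives $\A_{4,2}^c\cong S^2(H)$. Similarly, $\A_{3,1}^c$ is generated by $O(a,b,c)$: combining the $\Z/3$-rotation of the triangle (orientation-preserving, no sign) with a reflection (orientation-reversing, contributing $(-1)^3$) makes $O(a,b,c)$ totally antisymmetric in its labels, while an IHX applied at the pendant edge of any bigon-plus-tail diagram reduces it to a combination of triangles $O(a,b,c)$. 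Therefore $\A_{3,1}^c\cong \A_{1,0}^c$, and the case $n=3$ yields $\A_{5,2}^c\cong \A_{1,0}^c$.

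The only nontrivial ingredient is the appeal to Lemma~\ref{lem:Theta111} in the $n=3$ case; the computations of $\A_{2,1}^c$ and $\A_{3,1}^c$ are elementary once one verifies that no instance of IHX produces relations beyond those already coming from the planar symmetries of the circle.
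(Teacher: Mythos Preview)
Your argument is essentially the paper's: reduce to Theorem~\ref{thm:bu_isom} and kill $\ang{\Theta_{n+2}^{\geq 1}}$ for $n\le 3$ (vacuously for $n\le 2$, via Lemma~\ref{lem:Theta111} for $n=3$). The only deviation is that the paper obtains the identifications $\A_{2,1}^c\cong S^2(H)$ and $\A_{3,1}^c\cong \A_{1,0}^c$ by citing \cite[Proposition~5.2]{NSS21}, whereas you compute them by hand; your conclusions are correct, but the sentence ``no IHX or self-loop relation is available in this degree'' is not literally true for $O(a,b)$ (the two circle arcs are internal edges), so you should instead check that the IHX relator on such an arc produces only a self-loop term and a term already implied by the dihedral symmetry, hence no new relation.
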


\begin{proof}
When $n \leq 3$, we have $\ang{\Theta_n^{\geq 1}} = \{0\}$ by Lemma~\ref{lem:Theta111}.
Hence, Theorem~\ref{thm:bu_isom} implies that $\bu\colon\A_{n,1}^c \to \A_{n+2,2}^c$ is an isomorphism.
The latter half of the statement follows from \cite[Proposition~5.1]{NSS21} for instance.
\end{proof}

The rest of this subsection is devoted to giving an interesting observation related with the facts
\begin{align*}
 & \Delta_{n,0}(J) \in \Ker(\A_{2n+1,0}^{c} \xrightarrow{\ss} Y_{2n+1}\I\C/Y_{2n+2} \twoheadrightarrow Y_{2n+1}\I\H/Y_{2n+2}), \\
 & \Delta_{n,0}(J) \in \Ker(\A_{2n+1,0}^{c} \xrightarrow{\ss} Y_{2n+1}\I\C/Y_{2n+2} \xrightarrow{\bar{z}_{2n+2}} \A_{2n+2}^c\otimes\Q/\Z)
\end{align*}
shown respectively in \cite[Lemma~42]{CST16} and \cite[Proposition~3.5]{NSS21}.
Recall that $\I\H$ denotes the homology cobordism group of homology cylinders.

\begin{proposition}
\label{prop:Delta_in_Ker}
Let $J \in \A_{n,0}^c$.
Then $\Delta_{n,0}(J) \in \Ker(\pi\circ\ss_{2n+3,1}\circ\bu)$.
\end{proposition}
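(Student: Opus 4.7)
The plan is to expose the palindromic structure inherent in $\Delta_{n,0}(J)$, choose $\bu$ to blow up at the symmetric center, and then feed the resulting 1-loop Jacobi diagram into Corollary~\ref{cor:1LoopRel}.

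First, I would recall the diagrammatic definition of $\Delta_{n,0}$ from \cite[Definition~3.4]{NSS21} / \cite[Definition~4.3]{CST12W}: $\Delta_{n,0}(J) \in \A_{2n+1,0}^c$ is a signed sum of tree Jacobi diagrams, each summand consisting of two mirrored copies of (a rooting of) $J$ joined to a central trivalent vertex together with one additional distinguished leaf. The crucial feature for this proof is that every such summand admits a line symmetry $r$ with $r^2=\mathrm{id}$ that fixes the central vertex together with the distinguished leaf and swaps the two mirrored copies.

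Next, since the map $\bu$ is well-defined regardless of which trivalent vertex is blown up, I would choose to blow up each summand at its central vertex. The resulting loop of length three, with two of its corners leading into the mirrored copies of $J$ and the third carrying the distinguished leaf, then defines a line-symmetric 1-loop Jacobi diagram. Sliding the interior vertices of the two copies around the loop by the IHX relation, I would rewrite this (modulo thetas, i.e.\ modulo $\ang{\Theta_{2n+3}^{\geq 1}}$, in the sense of Theorem~\ref{thm:bu_isom}) as a palindromic $O$-diagram of the form $O(a_1,\dots,a_m,c,a_m,\dots,a_1)$ appearing in Corollary~\ref{cor:1LoopRel}, with any remainder landing in the symmetric theta submodule $\ang{\Theta_{2n+3}^{\geq 1, s}}$.

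Finally, Corollary~\ref{cor:1LoopRel} yields a relation in $\Ker\ss$ pairing each such palindromic $O$-diagram with its cyclically rotated partner and a sum of theta diagrams whose three arcs carry palindromic label sequences; all these theta diagrams are symmetric and therefore their $\ss$-images are killed by $\pi$. The signed structure of $\Delta_{n,0}(J)$ provides exactly the complementary palindromes needed so that every summand of $\pi\circ\ss_{2n+3,1}(\bu(\Delta_{n,0}(J)))$ either cancels with its partner or already descends into $\ss(\ang{\Theta_{2n+3}^{\geq 1, s}})$. This forces $\pi\circ\ss_{2n+3,1}\circ\bu(\Delta_{n,0}(J)) = 0$.

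The main obstacle will be the sign bookkeeping: the AS relation, the orientation-doubling intrinsic to $\Delta_{n,0}$, and the choice of cyclic order on the blown-up triangle all carry potential sign contributions that must combine coherently to match the relative signs in Corollary~\ref{cor:1LoopRel}. I would address this by reducing to the tree generators $T(a_1,\dots,a_{n+2})$ of $\A_{n,0}^c$ and treating the parity of $\ideg J$ separately, where the even case is the more delicate one since the AS relation interacts nontrivially with the reversing map on $\Delta_{n,0}(J)$.
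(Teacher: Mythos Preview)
Your approach is essentially the paper's: both exploit the palindromic structure of $\Delta_{n,0}(J)$ and feed the resulting pairs of symmetric $O$-diagrams into Corollary~\ref{cor:1LoopRel}, with the symmetric thetas from that corollary absorbed by $\pi$. Two points deserve clarification.

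First, your step~3 contains a confusion of modules. The parenthetical ``modulo $\langle\Theta_{2n+3}^{\geq 1}\rangle$ in the sense of Theorem~\ref{thm:bu_isom}'' does not apply: that theorem concerns the 2-loop module, whereas $\bu(\Delta_{n,0}(J))$ lives in the 1-loop module $\A_{2n+3,1}^c$, and the IHX expansion into simple $O$-diagrams takes place entirely there. No 2-loop thetas appear until you apply $\ss$ and invoke Corollary~\ref{cor:1LoopRel}. Also, each summand expands into a \emph{sum} of palindromic $O$-diagrams, not a single one (the rooted tree $R$ corresponds to a Lie word, and its image under $\iota\colon L_{n+1}\hookrightarrow H^{\otimes(n+1)}$ has many terms).

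Second, the step you flag as ``sign bookkeeping''---showing that the palindromic $O$-diagrams produced across all summands of $\Delta_{n,0}(J)$ really do pair up as required by Corollary~\ref{cor:1LoopRel}---is the actual content of the proof. The paper handles it by passing through the isomorphism $\langle B_{2n+3}^s\rangle \cong H^{\otimes(n+2)}\otimes\Z/2\Z$ from \cite[Proposition~5.2]{NSS21}, under which $\bu(\Delta_{n,0}(J))$ is identified with $\iota\circ\eta(J)$ (the asymmetric $O$-terms cancelling automatically). Lemma~\ref{lem:iota_eta} then shows, by a short tree argument comparing terms $\ell(v)\otimes\cdots\otimes\ell(w)$ with their reverses, that $\iota\circ\eta(J)$ lies in the span of $a_0\otimes\cdots\otimes a_{n+1} + (-1)^n a_{n+1}\otimes\cdots\otimes a_0$, which is exactly the required pairing. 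This replaces your proposed reduction to tree generators and parity split, and disposes of all signs at once.
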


Now, it is natural to ask whether $\Delta_{n,0}(J) \in \Ker(\A_{2n+1,0}^{c} \to Y_{2n+1}\I\C/Y_{2n+2})$.
The case $n=1$ is shown in \cite[Lemma~6.6(2)]{NSS21}.
To prove Proposition~\ref{prop:Delta_in_Ker}, we prepare the lemma below.
Let $\eta \colon \A_{n,0}^c \to H\otimes L_{n+1}$ denote a homomorphism used in \cite[Section~5.2]{NSS21} or \cite[Section~1]{CST12L}, and $\iota \colon H\otimes L_{n+1} \hookrightarrow H^{\otimes(n+2)}$ be the homomorphism induced by the natural embedding of the degree $n+1$ part $L_{n+1}$ of the free Lie algebra on $H$.

\begin{lemma}
\label{lem:iota_eta}
For $J \in \A_{n,0}^c$, the element $\iota\circ\eta(J)$ is contained in the submodule
\[
\Ang{a_0\otimes a_1\otimes\dots\otimes a_{n+1} +(-1)^n a_{n+1}\otimes\dots\otimes a_1\otimes a_0 \mid a_i \in \{1^\pm,\dots,g^\pm\}}.
\]
\end{lemma}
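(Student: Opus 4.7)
The plan is to establish that $\iota\circ\eta(J)$ lies in the image of the operator $\Pi\colon H^{\otimes(n+2)} \to H^{\otimes(n+2)}$ defined by $\Pi(x) = x + (-1)^n\rev(x)$, where $\rev$ reverses tensor factors; this image is exactly the submodule in the statement. My approach has two main parts: first proving the eigenvalue identity $\rev(\iota\circ\eta(J)) = (-1)^n\iota\circ\eta(J)$, and then upgrading this to an integral decomposition via a universal-labels trick.

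For the eigenvalue identity, recall that $\eta(J) = \sum_{v\in U(J)}\ell(v)\otimes\lambda_v(J)$, where $\lambda_v(J)\in L_{n+1}$ is the Lie word obtained by rooting $J$ at the univalent vertex $v$. Using the classical identity $\rev(\iota(w)) = (-1)^{k-1}\iota(w)$ for any Lie word $w$ of degree $k$---which I will verify by a short induction on bracket depth using $[a,b] = -[b,a]$---one obtains $\rev(\iota(\lambda_v(J))) = (-1)^n\iota(\lambda_v(J))$, and hence
\[
\rev(\iota\circ\eta(J)) = (-1)^n\sum_{v\in U(J)}\iota(\lambda_v(J))\otimes\ell(v).
\]
Matching this to $(-1)^n\iota\circ\eta(J)$ reduces the identity to the auxiliary claim
\[
\sum_{v\in U(J)} \iota(\lambda_v(J))\otimes \ell(v) = \sum_{v\in U(J)} \ell(v)\otimes\iota(\lambda_v(J)),
\]
which I will prove by induction on the number $n$ of trivalent vertices via a grafting argument: removing an edge incident to a univalent vertex splits $J$ into subtrees $J_1, J_2$, after which each $\lambda_{v'}(J)$ can be expressed recursively in terms of $\lambda_{v'}(J_i)$ modified by the opposite subtree. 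The main obstacle will be tracking the signs introduced when the re-rooting path crosses a trivalent vertex, flipping its cyclic order.

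Granting the eigenspace identity, upgrading to a $\Z$-level decomposition is nontrivial when $n$ is even, since palindromic pure tensors lie in the $(+1)$-eigenspace of $\rev$ but only their doubles lie in the image of $\Pi$. To circumvent this I will pass to the universal tree $\tilde J$ obtained from $J$ by relabeling its $n+2$ univalent vertices with distinct formal symbols $e_1,\ldots,e_{n+2}$ in a free $\Z$-module $\tilde H = \bigoplus_i\Z e_i$, and compute $\iota\circ\tilde\eta(\tilde J) \in \tilde H^{\otimes(n+2)}$. Every pure tensor appearing is $e_{\sigma(1)}\otimes\cdots\otimes e_{\sigma(n+2)}$ for a permutation $\sigma$, and is therefore non-palindromic, so every orbit of $\rev$ on this basis has exactly two elements. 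The inherited eigenspace condition then forces
\[
\iota\circ\tilde\eta(\tilde J) = \sum_{[\sigma]} c_\sigma\bigl(e_{\sigma(1)}\otimes\cdots\otimes e_{\sigma(n+2)} + (-1)^n e_{\sigma(n+2)}\otimes\cdots\otimes e_{\sigma(1)}\bigr)
\]
with integer coefficients $c_\sigma$, where $[\sigma]$ ranges over orbit representatives. Specializing $e_i \mapsto \ell(v_i)$ then exhibits $\iota\circ\eta(J)$ as an integer linear combination of the claimed generators, completing the argument.
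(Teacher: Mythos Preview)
Your proposal is correct, but it takes a more circuitous route than the paper. Both you and the paper use the universal-labels trick, but the paper applies it from the outset rather than as a final upgrade, which lets it bypass your two-step structure entirely. Specifically, the paper fixes any two univalent vertices $v\neq w$, writes $J$ in caterpillar form with $\ell(v)$ and $\ell(w)$ at the two ends and rooted trees $t_1,\dots,t_r$ hanging from the path between them, and observes that mirroring this caterpillar (which reverses all $n$ cyclic orders, contributing $(-1)^n$ by AS) directly matches each term $\ell(v)\otimes a_1\otimes\cdots\otimes a_n\otimes\ell(w)$ in $\iota\circ\eta(J)$ with the term $(-1)^n\ell(w)\otimes a_n\otimes\cdots\otimes a_1\otimes\ell(v)$. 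This single observation simultaneously establishes the eigenvalue identity and the integral pairing, so there is no need for the separate Lie-reversal identity, your auxiliary claim $\sum_v\iota(\lambda_v(J))\otimes\ell(v)=\sum_v\ell(v)\otimes\iota(\lambda_v(J))$, or the palindrome analysis. Your auxiliary claim is true---indeed the same caterpillar bookkeeping proves it, since the part of $\iota(\lambda_v(J))$ ending in $\ell(w)$ and the part of $\iota(\lambda_w(J))$ beginning with $\ell(v)$ both equal $\iota(t_1)\cdots\iota(t_r)$---but your proposed inductive grafting proof is only sketched and would need careful sign tracking; you may find it cleaner to argue directly via the caterpillar form instead.
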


\begin{proof}
In this proof, when $v \neq w \in U(J)$, we distinguish $\ell(v)$ and $\ell(w)$ even if $\ell(v) = \ell(w)$.
Let us fix $v \neq w \in U(J)$ and compare terms of the form $\ell(v) \otimes\dots\otimes \ell(w)$ and $\ell(w) \otimes\dots\otimes \ell(v)$ in $\iota\circ\eta(J)$.
By the AS relation, we may assume $J$ is of the form
\[
\begin{tikzpicture}[scale=0.3, baseline={(0,0)}, densely dashed]
 \draw (0,0) -- (-3,0) node[anchor=east] {$\ell(v)$};
 \draw (-2,0) -- (-2,1) node[anchor=south] {$t_1$};
 \node at (0,1) {$\cdots$};
 \draw (2,0) -- (2,1) node[anchor=south] {$t_r$};
 \draw (0,0) -- (3,0) node[anchor=west] {$\ell(w)$};
\end{tikzpicture}
=
(-1)^n
\begin{tikzpicture}[scale=0.3, baseline={(0,0)}, densely dashed]
 \draw (0,0) -- (-3,0) node[anchor=east] {$\ell(w)$};
 \draw (-2,0) -- (-2,1) node[anchor=south] {$\bar{t}_r$}; 
 \node at (0,1) {$\cdots$};
 \draw (2,0) -- (2,1) node[anchor=south] {$\bar{t}_1$};
 \draw (0,0) -- (3,0) node[anchor=west] {$\ell(v)$};
\end{tikzpicture}\ ,
\]
where $t_i$ is a (rooted) tree and $\bar{t}_i$ is its mirror image, that is, the tree obtained by reversing the cyclic orders of $t_i$,
and the bar has a different meaning in Section~\ref{sec:refined}.
Therefore, for each term of the form $\ell(v) \otimes a_1 \otimes \dots \otimes a_{n} \otimes \ell(w)$, there is a term of the form $(-1)^n\ell(w) \otimes a_{n} \otimes \dots \otimes a_1 \otimes \ell(v)$.
\end{proof}

Recall that $\A_{n,l}^{c,s}$ denote the submodule of $\A_{n,l}^{c}$ generated by symmetric Jacobi diagrams.

\begin{proof}[Proof of Proposition~\textup{\ref{prop:Delta_in_Ker}}]
Under the isomorphism $\ang{B_{2n+3}^s} \cong H^{\otimes(n+2)}\otimes\Z/2\Z$ (\cite[Proposition~5.2]{NSS21}), we have $\bu(\Delta_{n,0}(J)) = \iota\circ\eta(J)$ since asymmetric Jacobi diagrams in $\bu(\Delta_{n,0}(J))$ are canceled each other.
By Lemma~\ref{lem:iota_eta}, $\bu(\Delta_{n,0}(J))$ is written as a sum of elements of the form $O(a_1, \dots, a_{n+2}, \dots, a_1) + O(a_{n+2}, \dots, a_1, \dots, a_{n+2})$.
By Corollary~\ref{cor:1LoopRel} and the definition of $\pi$ in Section~\ref{sec:Intro}, we conclude that $\bu(\Delta_{n,0}(J)) \in \Ker(\pi\circ\ss_{2n+3,1})$.
\end{proof}

\subsection{Goussarov-Habiro conjecture}
In this subsection, we prove the Goussarov-Habiro conjecture for the $Y_5$-equivalence as a corollary of Proposition~\ref{prop:A4}.
We write $L'_n$ for the degree $n$ part of the free quasi-Lie algebra on $H$.
Let $D_n$ (resp.\ $D'_n$) denotes the kernel of the bracket map $H\otimes L_{n+1} \to L_{n+2}$ (resp.\ $H\otimes L'_{n+1} \to L'_{n+2}$).
By \cite[Theorem~1.1]{CST12L}, we have an isomorphism $\A_{n,0}^c \cong D'_n$, and it is torsion-free if $n$ is even.
Since the natural projection $L'_{n} \twoheadrightarrow L_{n}$ is an isomorphism over $\Q$, one conclude $\rank D'_n = \rank D_n$, which is computed by Witt's formula for $\rank L_n$ (see \cite[Theorem~5.11]{MKS04} for example).

\begin{proposition}
\label{prop:A4}
The module $\A_4^c$ is isomorphic to the direct sum of $D_4'$, $\A_{4,1}^c$, $S^2(H)$, and $\Z$.
In particular, $\A_4^c$ is torsion-free, and thus $\ss\colon \A_4^c \to Y_4\I\C/Y_5$ is an isomorphism.
\end{proposition}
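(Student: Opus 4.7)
The plan is to compute $\A_4^c$ via its loop decomposition $\A_4^c = \bigoplus_{l\geq 0}\A_{4,l}^c$ and identify each summand explicitly. A connected Jacobi diagram with four trivalent vertices and first Betti number $l$ has $6-2l$ univalent vertices, so only $l = 0, 1, 2, 3$ contribute. The proposition therefore amounts to identifying each summand with a torsion-free module; the isomorphism $\ss\colon\A_4^c\xrightarrow{\cong}Y_4\I\C/Y_5$ will then follow formally from the known surjectivity and rational injectivity of $\ss$.

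For the individual pieces, I would argue as follows. For $l=0$, the Conant-Schneiderman-Teichner isomorphism $\A_{n,0}^c\cong D'_n$ recalled just above specializes to $\A_{4,0}^c\cong D'_4$, and since $n=4$ is even, $D'_4$ is torsion-free by \cite[Theorem~1.1]{CST12L}; an explicit basis is provided by Example~\ref{ex:A40}. For $l=2$, Lemma~\ref{lem:bu_small} has already established $\bu\colon\A_{2,1}^c\xrightarrow{\cong}\A_{4,2}^c$ together with $\A_{4,2}^c\cong S^2(H)$. For $l=3$, there are no univalent vertices, and since the unique connected trivalent graph on four vertices is $K_4$, the module $\A_{4,3}^c$ is generated by the tetrahedron; applying the AS relation together with the IHX relation on a single edge, and exploiting the edge-transitivity of $K_4$, one checks that these relations force no torsion and do not kill the tetrahedron, giving $\A_{4,3}^c\cong\Z$. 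For $l=1$, the torsion-freeness of $\A_{4,1}^c$ is already recorded in \cite[Proposition~5.1]{NSS21} for even $n$, so there is nothing further to compute.

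Putting the four identifications together yields the claimed direct-sum decomposition and shows that $\A_4^c$ is torsion-free. Since $\ss$ is surjective by \cite[Section~8.5]{Hab00C} and $\ss\otimes\id_\Q$ is an isomorphism by \cite[Theorem~7.11]{CHM08}, we have $\Ker\ss\subset\tor\A_4^c=0$, so $\ss\colon\A_4^c\to Y_4\I\C/Y_5$ is an isomorphism.

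The main obstacle is the computation $\A_{4,3}^c\cong\Z$. Because each IHX relation applied to an edge of $K_4$ produces three tetrahedra (with possibly different cyclic orders at the vertices), one must keep careful track of how the full automorphism group of $K_4$ acts on these cyclic-order data and show, via AS, that all such IHX identities become trivial. This is a finite combinatorial check, but the most delicate portion of the argument; everything else reduces to invoking existing results on $\A_{n,0}^c$, $\A_{n,1}^c$, and $\A_{n,2}^c$.
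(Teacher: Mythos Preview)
Your proposal is correct and follows essentially the same route as the paper: decompose $\A_4^c$ by loop number, identify each summand ($D'_4$ via \cite{CST12L}, $\A_{4,1}^c$ via \cite{NSS21}, $S^2(H)$ via Lemma~\ref{lem:bu_small}, and $\Z$ by direct inspection of the tetrahedron), and then deduce that $\ss$ is an isomorphism from surjectivity plus rational injectivity. The only minor discrepancies are that the paper cites \cite[Proposition~5.2]{NSS21} rather than 5.1 for the $1$-loop part, and it does not invoke Example~\ref{ex:A40} here (that example is used instead in the proof of Theorem~\ref{thm:Y3C/Y5}).
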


\begin{proof}
Let us investigate each direct summand of $\A_4^c = \bigoplus_{l=0}^3 \A_{4,l}^c$.
First, $\A_{n,0}^c \cong D'_n$ by \cite[Theorem~1.1]{CST12L}.
The 1-loop part $\A_{4,1}^c$ is determined in \cite[Proposition~5.2]{NSS21}, which is torsion-free (and its rank is given in Remark~\ref{rem:A41}).
It follows from Lemma~\ref{lem:bu_small} that $\A_{4,2}^{c} \cong \A_{2,1}^{c} \cong S^2(H)$.
Also, one can check that $\A_{4,3}^{c} \cong \Z$ (see \cite[Table~7.1]{CDM12} in the case of rational coefficients).

We then conclude that $\A_4^c$ is torsion-free.
Here recall the facts that $\ss\colon \A_4^c \to Y_4\I\C/Y_5$ is surjective and that it is an isomorphism over $\Q$.
These imply that $\ss\colon \A_4^c \to Y_4\I\C/Y_5$ is an isomorphism.
\end{proof}

\begin{remark}
\label{rem:A41}
By \cite[Proposition~5.2]{NSS21}, the rank of $\A_{4,1}^{c}$ is computed as follows:
\[
\frac{1}{2}\frac{1}{4}\left( \varphi(1)(2g)^4+\varphi(2)(2g)^2+\varphi(4)2g \right)+\frac{1}{4}(2g+1)(2g)^2 = 2g^4+2g^3+\frac{3}{2}g^2+\frac{1}{2}g,
\]
where $\varphi$ is Euler's totient function.
\end{remark}

As mentioned in Section~\ref{sec:Intro}, the Goussarov-Habiro conjecture is a fundamental question about a relation between the $Y_n$-equivalence and finite type invariants of homology cylinders.
For the definition of finite type invariant, we refer the reader to \cite{Gou99}, \cite{Hab00C}, and \cite[Section~2.3]{MaMe13}.

The degree 4 part of the $Y$-reduction of the LMO functor $\Ztilde$ gives a map $\Ztilde_4^Y\colon \I\C \to \A_4^Y\otimes\Q$, which is a finite type invariant of degree at most 4 (\cite[Theorem~7.11]{CHM08}), where $\A_n^Y$ is the submodule of $\A_n$ generated by Jacobi diagrams without strut.
Moreover, it induces a homomorphism $\Ztilde_4^Y\colon Y_4\I\C/Y_5 \to \A_4^c \subset \A_4^Y\otimes\Q$ satisfying $\Ztilde_4^Y \circ \ss_4 = \id_{\A_4^c}$ up to sign.
The next corollary is proved in much the same way as \cite[Proposition~6.12]{NSS21} and \cite[Section~5.1]{MaMe13}.

\begin{corollary}
\label{cor:GHC}
For $M, M' \in \I\C$, they are $Y_{5}$-equivalent if and only if $f(M)=f(M')$ for every finite type invariant $f$ of degree at most $4$.
\end{corollary}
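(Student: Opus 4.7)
The ``only if'' direction is standard: any finite type invariant of degree at most $4$ vanishes on the alternating sum over subfamilies of a family of disjoint claspers of total degree at least $5$, and therefore takes the same value on $Y_5$-equivalent homology cylinders.

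For the ``if'' direction, I would follow the template of \cite[Section~5.1]{MaMe13} and \cite[Proposition~6.12]{NSS21} and argue by induction on $k\in\{0,1,2,3,4\}$ with the hypothesis: \emph{if $M,M'\in\I\C$ satisfy $f(M)=f(M')$ for every finite type invariant $f$ of degree at most $4$, then $M\sim_{Y_{k+1}}M'$.} The base case $k=0$ is vacuous since $Y_1\I\C=\I\C$, and the case $k=4$ is the desired statement.

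For the induction step from $k$ to $k+1$, assume $M\sim_{Y_k}M'$. I would use the standard fact (cf.\ \cite[Section~2.3]{MaMe13}) that $\I\C/Y_{k+1}$ is a group and $Y_k\I\C/Y_{k+1}$ is a subgroup of it, so that one can pick a $Y_{k+1}$-inverse $\overline{M'}$ of $M'$ and obtain a class $[M\circ\overline{M'}]\in Y_k\I\C/Y_{k+1}$ which vanishes exactly when $M\sim_{Y_{k+1}}M'$. The $Y$-reduction $\Ztilde_k^Y$ of the LMO functor is a finite type invariant of degree at most $k\le 4$ by \cite[Theorem~7.11]{CHM08}, so by hypothesis $\Ztilde_k^Y(M)=\Ztilde_k^Y(M')$; since $\Ztilde_k^Y$ is multiplicative and its restriction to $Y_k\I\C/Y_{k+1}$ is a group homomorphism, this gives $\Ztilde_k^Y([M\circ\overline{M'}])=0$. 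By Proposition~\ref{prop:A4} for $k=4$ (together with the corresponding known isomorphisms $\ss_k\colon\A_k^c\xrightarrow{\cong}Y_k\I\C/Y_{k+1}$ for $k=1,2,3$ from \cite{MaMe13} and \cite{NSS21}), the restriction of $\Ztilde_k^Y$ to $Y_k\I\C/Y_{k+1}$ is, up to sign, the inverse of $\ss_k$, and in particular is injective. Hence $[M\circ\overline{M'}]=0$, completing the induction step.

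The crux of the argument is thus the injectivity of $\Ztilde_k^Y$ on $Y_k\I\C/Y_{k+1}$ at the critical step $k=4$. Over $\Q$ this is classical, but torsion in $\A_4^c$ would obstruct the integral version and break the induction. Proposition~\ref{prop:A4} rules this out, and is precisely the ingredient that makes the Goussarov--Habiro conjecture for $Y_5$-equivalence go through; the remaining pieces (the fact that $\Ztilde_k^Y$ is a finite type invariant of the expected degree, multiplicativity, and the group structure of $\I\C/Y_{k+1}$) are routine and already recorded in the cited references.
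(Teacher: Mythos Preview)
Your overall strategy and the crucial $k=4$ step match the paper's proof: the paper also reduces to showing $\Ztilde_4^Y$ is injective on $Y_4\I\C/Y_5$, which follows from Proposition~\ref{prop:A4}, and then uses multiplicativity of $\Ztilde^Y$ exactly as you describe.

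There is, however, a genuine error in your handling of the intermediate step $k=3$. You assert that $\ss_3\colon\A_3^c\to Y_3\I\C/Y_4$ is an isomorphism, but this is false: $\Ker\ss_3$ contains, for instance, $O(a,b,a)+O(b,a,b)$ (see \cite[Lemma~6.6(1)]{NSS21}, recalled in Remark~\ref{rem:TrueKer} of this paper). More to the point, $Y_3\I\C/Y_4$ has non-trivial $2$-torsion by \cite[Theorem~1.7]{NSS21}, while $\Ztilde_3^Y$ takes values in $\A_3^Y\otimes\Q$ and therefore annihilates all torsion; so $\Ztilde_3^Y$ alone cannot be injective on $Y_3\I\C/Y_4$, and your induction step from $k=3$ to $k=4$ does not go through as written. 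The proof of \cite[Proposition~6.12]{NSS21} for the $Y_4$-case uses additional finite type invariants (built from the maps $\bar{z}$) to detect this torsion. The paper sidesteps this by simply invoking \cite[Proposition~6.12]{NSS21} to obtain $M\sim_{Y_4}M'$ directly, and then carrying out only the final $k=4$ step explicitly. You should do the same rather than re-running the induction with the incorrect claim about $\ss_3$.
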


\begin{proof}
If $M \sim_{Y_5} M'$, then they are not distinguished by finite type invariants of degree at most $4$ (see \cite[Lemma~2.3]{MaMe13} for instance).

Conversely, suppose that $f(M)=f(M')$ for every finite type invariant $f$ of degree at most $4$.
Then we see $M \sim_{Y_4} M'$ by \cite[Proposition~6.12]{NSS21}, and $\Ztilde^Y_{4}(M) = \Ztilde^Y_{4}(M') \in \A_4^Y\otimes\Q$.
Since $Y_4\I\C/Y_5$ is a group (\cite[Theorem~3]{Gou99}, \cite[Section~8.5]{Hab00C}), there is $N \in Y_4\I\C$ such that $M \sim_{Y_5} N\circ M'$.
Thus, we have $\Ztilde^Y_0(N)=\emptyset$ and $\Ztilde^Y_i(N)=0$ for $i=1,2,3$.
Now, by the formula
\[
\Ztilde^Y_4(M) = \sum_{i=0}^{4} \Ztilde^Y_i(N)\star\Ztilde^Y_{4-i}(M')
\]
(see \cite[Corollary~8.3]{CHM08} for the details), we conclude $\Ztilde^Y_4(N)=0 \in \A_4^Y\otimes\Q$.
Since $\Ztilde^Y_4\colon Y_4\I\C/Y_5 \xrightarrow{\cong} \A_4^c \hookrightarrow \A_4^Y\otimes\Q$ is injective by Proposition~\ref{prop:A4}, $N=0 \in Y_4\I\C/Y_5$, and hence $M$ is $Y_{5}$-equivalent to $M'$.
\end{proof}

\section{Kernel of $\ss$ restricted to $\A_{n,1}^c$}
\label{sec:Ker_OneLoop}
The module structure of $\A_{n,1}^c$ is determined in \cite[Proposition~5.2]{NSS21}.
Recall that $\ang{B_n^s}$ is the submodule of $\A_{n,1}^c$ generated by the set $B_n^s$ written after Remark~\ref{rem:symmetric}.
To obtain a more precise description of $\ang{B_n^s}$, we introduce a necklace with arrow, and complete the proof of Theorem~\ref{thm:Ker_sn1}.
Here, a necklace consists of beads which are elements of the set $\{1^\pm,\dots,g^\pm\}$ and a necklace of length $n$ is considered up to the action of the dihedral group $\D_{2n}$.

\begin{definition}
A \emph{necklace with arrow} is a pair of a symmetric necklace and an arrow which is an axis of symmetry.
An arrow points to either a bead or the midpoint of adjacent beads.
The set $\N_{2m}$ of necklaces with arrow is separated into subsets $\N_{2m}'$ and $\N_{2m}''$, where $\N_{2m}'$ (resp.\ $\N_{2m}''$) denotes the set of necklaces with arrow pointing to a single bead (resp.\ the midpoint of adjacent beads) as illustrated on the left (resp.\ right) in Figure~\ref{fig:Necklace}.
The right one is denoted by $O(a_1,\dots,a_m \uparrow a_m,\dots,a_1)$.
\end{definition}

\begin{figure}[h]
 \centering
\begin{tikzpicture}[scale=0.3, baseline={(0,0)}, ]
 \node at (180:3) {$a_1$};
 \node at (140:3) {$a_2$};
 \node at (-140:3) {$a_2$};
 \node at (40:3) {$a_m$};
 \node at (-40:3) {$a_m$};
 \node at (0:3.5) {$a_{m+1}$};
 \draw [->] (-2,0) -- (2,0);
 \draw [dotted, thick] (80:3) arc (80:100:3) (100:3);
 \draw [dotted, thick] (-80:3) arc (-80:-100:3) (-100:3);
\end{tikzpicture}
\quad
\begin{tikzpicture}[scale=0.3, baseline={(0,0)}, ]
 \node at (160:3) {$a_1$};
 \node at (-160:3) {$a_1$};
 \node at (120:3) {$a_2$};
 \node at (-120:3) {$a_2$};
 \node at (20:3) {$a_m$};
 \node at (-20:3) {$a_m$};
 \draw [->] (-2,0) -- (2,0);
 \draw [dotted, thick] (60:3) arc (60:80:3) (80:3);
 \draw [dotted, thick] (-60:3) arc (-60:-80:3) (-80:3);
\end{tikzpicture}
 \caption{Two types of necklaces with arrow.}
 \label{fig:Necklace}
\end{figure}
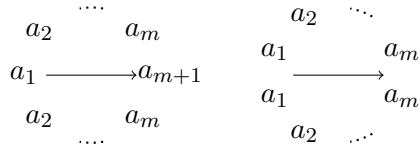

Note that considering a necklace with arrow is equivalent to fixing a base point on a necklace.
The former is more convenient to discuss $\ang{B_n^s}$.
Forgetting arrows, we obtain a natural map $\N_{2m} \twoheadrightarrow \ang{B_{2m}^s}$ (see Figure~\ref{fig:Jacobi_from_necklace}).

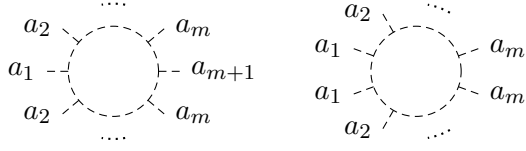
\begin{figure}[h]
 \centering
\begin{tikzpicture}[scale=0.3, baseline={(0,0)}, densely dashed]
 \draw (0,0) circle [radius=2];
 \draw (180:2) -- (180:3) node[anchor=east] {$a_1$};
 \draw (140:2) -- (140:3) node[anchor=east] {$a_2$};
 \draw (-140:2) -- (-140:3) node[anchor=east] {$a_2$};
 \draw (40:2) -- (40:3) node[anchor=west] {$a_m$};
 \draw (-40:2) -- (-40:3) node[anchor=west] {$a_m$};
 \draw (0:2) -- (0:3) node[anchor=west] {$a_{m+1}$};
 \draw [dotted, thick] (80:3) arc (80:100:3) (100:3);
 \draw [dotted, thick] (-80:3) arc (-80:-100:3) (-100:3);
\end{tikzpicture}
\quad
\begin{tikzpicture}[scale=0.3, baseline={(0,0)}, densely dashed]
 \draw (0,0) circle [radius=2];
 \draw (160:2) -- (160:3) node[anchor=east] {$a_1$};
 \draw (-160:2) -- (-160:3) node[anchor=east] {$a_1$};
 \draw (120:2) -- (120:3) node[anchor=east] {$a_2$};
 \draw (-120:2) -- (-120:3) node[anchor=east] {$a_2$};
 \draw (20:2) -- (20:3) node[anchor=west] {$a_m$};
 \draw (-20:2) -- (-20:3) node[anchor=west] {$a_m$};
 \draw [dotted, thick] (60:3) arc (60:80:3) (80:3);
 \draw [dotted, thick] (-60:3) arc (-60:-80:3) (-80:3);
\end{tikzpicture}
 \caption{The Jacobi diagrams obtained from necklaces with arrow in Figure~\ref{fig:Necklace}.}
 \label{fig:Jacobi_from_necklace}
\end{figure}

Also, we define a map $\mh \colon \N_{2m}'' \to \ang{B_{2m-1}^{s}}$ by merging the two adjacent beads close to the head of the arrow, namely
\[
\mh(O(a_1,\dots,a_m \uparrow a_m,\dots,a_1)) = O(a_1,\dots,a_m,\dots,a_1).
\]
Furthermore, this map induces an isomorphism $\mh \colon (\Z/2\Z)\N_{2m}'' \to \ang{B_{2m-1}^s}$ by \cite[Proposition~5.2]{NSS21}.
Similarly, $\mht \colon \N_{2m}'' \to \ang{B_{2m-2}^s}$ is defined by merging the two beads close to the head and merging two beads close to the tail.

For each $x \in \N_{2m}$, there is a unique non-negative integer $e=e(x)$ such that $x=x^\pi=\dots=x^{\pi/2^{e-1}} \neq x^{\pi/2^e}$, where $x^{\pi/2^{k}}$ denotes the necklace with arrow obtained by rotating only the arrow of $x$ by $\pi/2^k$.

\begin{definition}
 Let $\iota \colon \N_{2m} \to \N_{2m}$ be the map defined by $\iota(x) = x^{\pi/2^e}$.
\end{definition}

\begin{example}
For $a \neq b$, let $x = O(a,b,b,a,a,b,b,a \uparrow a,b,b,a,a,b,b,a) \in \N_{16}$.
Then $e(x)=2$, $\iota(x) = O(b,a,a,b,b,a,a,b \uparrow b,a,a,b,b,a,a,b)$, and
\begin{align*}
 \mh(x) &=O(a,b,b,a,a,b,b,a,b,b,a,a,b,b,a), \\
 \mh(\iota(x)) &=O(b,a,a,b,b,a,a,b,a,a,b,b,a,a,b) \in \A_{15,1}^{c,s}.
\end{align*}
\end{example}

\begin{lemma}\label{lem:iota}
The map $\iota$ is an involution without fixed point.
\end{lemma}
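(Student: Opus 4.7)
The plan is to treat the arrow-rotation $x\mapsto x^\theta$ as a group action of $\mathbb{R}/2\pi\mathbb{Z}$ on $\N_{2m}$ satisfying the exponent law $(x^{\theta_1})^{\theta_2}=x^{\theta_1+\theta_2}$ together with $x^{2\pi}=x$, and to derive both claims by elementary exponent arithmetic.

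The no-fixed-point part is immediate from the definition of $e=e(x)$: since $x\neq x^{\pi/2^e}$ by definition of $e$, we have $\iota(x)=x^{\pi/2^e}\neq x$.

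For the involution part, I would set $y:=\iota(x)=x^{\pi/2^e}$ and aim to show $e(y)=e$ and $\iota(y)=x$. Using the exponent law, $y^{\pi/2^k}=x^{\pi/2^e+\pi/2^k}$. For each $0\leq k<e$, the defining equality $x=x^{\pi/2^k}$ transports, via applying $(\cdot)^{\pi/2^e}$ to both sides, to $y=y^{\pi/2^k}$ (and back, so the two are equivalent); hence $y=y^{\pi/2^k}$ for all $k<e$. On the other hand,
\[ y^{\pi/2^e}=x^{2\pi/2^e}=x^{\pi/2^{e-1}}=x, \]
using $x=x^{\pi/2^{e-1}}$ when $e\geq 1$ and the identity $x^{2\pi}=x$ in the edge case $e=0$. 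Combined with $y\neq x$ from the first part, this forces $e(y)=e$, so $\iota(y)=y^{\pi/2^{e(y)}}=y^{\pi/2^e}=x$.

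The only point that warrants attention is the essentially routine verification that arrow-rotation descends to a well-defined action on the dihedral quotient $\N_{2m}$ satisfying the stated exponent identity. Since the dihedral group acts simultaneously on the beads and the arrow whereas $x\mapsto x^\theta$ touches only the arrow, the two operations commute on labeled configurations, so the action and its exponent law pass cleanly to $\N_{2m}$. Beyond this preliminary bookkeeping, I do not anticipate any genuine obstacle.
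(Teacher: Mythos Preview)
Your argument is correct and matches the paper's proof line for line: both establish $y^{\pi/2^k}=y$ for $0\le k<e$ via $(x^{\pi/2^k})^{\pi/2^e}=x^{\pi/2^e}$, then compute $y^{\pi/2^e}=x^{\pi/2^{e-1}}=x$, and conclude $\iota(y)=x$ (the paper leaves the deduction $e(y)=e$ implicit, which you spell out). The only slight overstatement is framing $x\mapsto x^\theta$ as a full $\mathbb{R}/2\pi\mathbb{Z}$-action---the rotated arrow is only guaranteed to be an axis of symmetry for the particular angles $\pi/2^k$ with $k\le e(x)$ that appear in the definition---but since those are exactly the angles you use, the exponent identities you need hold and the proof goes through unchanged.
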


\begin{proof}
The map $\iota$ is fixed point free by definition.
Let $y=\iota(x)$ and $e=e(x)$.
Then for $k=0,1,\dots,e-1$ we have $y^{\pi/2^k} = (x^{\pi/2^k})^{\pi/2^e} = x^{\pi/2^e} = y$.
Also, $y^{\pi/2^e} = x^{\pi/2^{e-1}} = x$, and thus, $\iota(y)=x$.
\end{proof}

Since $\#\N_{2m}' = (2g)^{m+1}$ and $\#\N_{2m}'' = (2g)^{m}$, Lemma~\ref{lem:iota} implies that $\#(\N_{2m}/\iota) = \frac{1}{2}((2g)^{m+1}+(2g)^{m})$.
It follows from the proof of \cite[Proposition~5.2]{NSS21} that $\rank_\Z \ang{B_{2m}^s} = \frac{1}{2}(2g+1)(2g)^m$.
Therefore, a natural map $\Z\N_{2m} \twoheadrightarrow \ang{B_{2m}^s}$ induces an isomorphism $\Z(\N_{2m}/\iota) \to \ang{B_{2m}^s}$.
This isomorphism enables us to identify connected symmetric Jacobi diagrams with necklaces with arrow.

We consider the composite map
\[
\A_{2m-1,1}^{c} \xrightarrow{\ss} Y_{2m-1}\I\C/Y_{2m} \xrightarrow{\bar{z}_{2m,2}} \A_{2m,2}^{c}\otimes\Q/\Z
\]
Since the image of this homomorphism is included in $\A_{2m,2}^c\otimes(\frac{1}{2}\Z/\Z)$, one obtains a homomorphism
\[
\A_{2m-1,1}^{c} \xrightarrow{(\bar{z}_{2m,2} \circ \ss)\otimes 2} \A_{2m,2}^{c}\otimes\Z/2\Z.
\]
Moreover, it induces a homomorphism
\[
f\colon \A_{2m-1,1}^{c} \to (\A_{2m,2}^{c}/\ang{\Theta_{2m}^{\geq 1}})\otimes\Z/2\Z \xrightarrow{\bd} \A_{2m-2,1}^{c}\otimes\Z/2\Z,
\]
which sends $O(a_1, a_2, \dots, a_m, \dots, a_2, a_1)$ to $O(a_1, a_2, \dots, a_m, \dots, a_2)$ by \cite[Theorem~1.1]{NSS21}.

Let us prove Theorem~\ref{thm:Ker_sn1}: $\rank_{\Z/2\Z} \Ker(\pi\circ\ss_{2m-1,1}) = \frac{1}{2}((2g)^m-(2g)^{\lceil m/2 \rceil})$ for $m\geq 2$.
Note that $\Ker(\pi\circ\ss_{2m-1,1}) \subset \tor\A_{2m-1,1}^c$.

\begin{proof}[Proof of Theorem~\textup{\ref{thm:Ker_sn1}}]
It follows from Corollary~\ref{cor:1LoopRel} that
\[
\Ang{\mh(x)+\mh(\iota(x)) \in \ang{B_{2m-1}^s} \Bigm| x \in \N_{2m}^{e=0}} \subset \Ker(\pi\circ\ss_{2m-1,1}),
\]
where $\N_{2m}^{e=0} = \{x \in \N''_{2m} \mid e(x)=0\}$.
We prove that this inclusion is actually an equality, and then the proof is complete since the rank of the $\Z/2\Z$-module on the left is $\frac{1}{2}((2g)^m-(2g)^{\lceil m/2 \rceil})$.

Since $\bar{z}_{2m,1}$ factors through $\pi$, we obtain $\Ker(\pi\circ\ss_{2m-1,1}) \subset \Ker(\bar{z}_{2m,1}\circ\ss_{2m-1,1})$.
It follows from \cite[Theorem~1.1]{NSS21} that for $J=O(a_1,\dots,a_{m-1},a_m,a_{m-1},\dots,a_1)$ we have
\begin{align*}
\bar{z}_{2m,1}\circ\ss(J) 
 = \delta'(J)
 = \frac{1}{2}O(a_1,\dots,a_{m-1},a_m,a_m,a_{m-1},\dots,a_1).
\end{align*}
Therefore, one obtains a commutative diagram
\[
\xymatrix{
 (\Z/2\Z)\N_{2m}'' \ar[d]_\mh \ar[r] & (\Z/2\Z)(\N_{2m}/\iota) \ar[d] \\
 \ang{B_{2m-1}^s} \ar[r]^-{\bar{z}_{2m,1}\circ\ss} & \ang{B_{2m}^s}\otimes\Z/2\Z,
}\]
where the vertical arrows are the isomorphisms explained above.
We now conclude that
\[
\Ker(\bar{z}_{2m,1}\circ\ss_{2m-1,1}) \subset \Ang{\mh(x)+\mh(\iota(x)) \Bigm| x, \iota(x) \in \N''_{2m}}.
\]
When $m$ is odd, this completes the proof since $\N_{2m}^{e=0}$ coincides with $\{x \in \N''_{2m} \mid \iota(x) \in \N''_{2m}\}$.

We next discuss the case $m$ even.
Since
$
(\bar{z}_{2m,2}\otimes2)\circ\ss(x) = \delta'(x) \in \ang{\Theta_{2m}^{\geq 1}}
$
for $x \in \ang{\Theta_{2m-1}^{\geq 1,s}}$, the above map $f$ factors through $\pi\circ\ss_{2m-1,1}$ as follows:
\[
\xymatrix{
 \A_{2m-1,1}^c \ar[d]_{s_{2m-1,1}} \ar[drr]^-f & & \\ 
 Y_{2m-1}\I\C/Y_{2m} \ar[r]^-{\bar{z}_{2m,2}\otimes 2} \ar[d]_\pi & (\A_{2m,2}^c/\ang{\Theta_{2m}^{\geq 1}})\otimes\Z/2\Z \ar[r]_-\bd & \A_{2m-2,1}^c\otimes\Z/2\Z, \\
 (Y_{2m-1}\I\C/Y_{2m})/\ss(\ang{\Theta_{2m-1}^{\geq 1,s}}) \ar@{-->}[ur] & & 
}\]
where the module $\ang{\Theta_{2m-1}^{\geq 1,s}}$ is defined in Section~\ref{sec:Intro}.
Hence we have $\Ker(\pi\circ\ss_{2m-1,1}) \subset \Ker f$.
Now, it suffices to show that
\[
\Ang{\mh(x)+\mh(\iota(x)) \Bigm| x \in \N_{2m}^{e\geq 1}} \cap \Ker f = \{0\},
\]
where $\N_{2m}^{e\geq 1} = \{x \in \N''_{2m} \mid e(x)\geq 1,\ \iota(x) \in \N''_{2m}\}$.
Note that any $x \in \N_{2m}^{e\geq 1}$ can be uniquely written as 
\[
x = O(\underbrace{w,\bar{w},\dots,w,\bar{w}}_{2^e} \uparrow \underbrace{w,\bar{w},\dots,w,\bar{w}}_{2^e})
\]
for some word $w \in \{1^\pm,\dots,g^\pm\}^{m/2^e}$ with $w \neq \bar{w}$, where $\bar{w}$ denotes the word obtained from $w$ by reversing the order (see Example~\ref{ex:Oab} below).
By definition, we have 
\[
f(\mh(x) + \mh(\iota(x))) = \mht(x) + \mht(\iota(x)) \in \ang{B_{2m-2}^s}\otimes\Z/2\Z
\]
for $x \in \N_{2m}^{e\geq 1}$.
This element is contained in
\[
\ang{B_{2m-2}^{s,\mathrm{period}}}\otimes\Z/2\Z \cong \ang{B_{m-1}^s} \cong H^{\otimes m/2}\otimes\Z/2\Z.
\]
See \cite[Lemma~5.3]{NSS21} for the definitions of the superscript ``period'' and the first isomorphism, and here recall Remark~\ref{rem:symmetric}.
Under these isomorphisms, the image of $x = O(w,\bar{w},\dots,w,\bar{w} \uparrow w,\bar{w},\dots,w,\bar{w})$ is $w\bar{w}\cdots w\bar{w} + \bar{w}w\cdots \bar{w}w$, where each term consists of $2^{e-1}$ words.
Since these two terms are linearly independent in $H^{\otimes m/2}\otimes\Z/2\Z$, this completes the proof.
\end{proof}

\begin{example}\label{ex:Oab}
For $a \neq b$, let $x = O(a,b,b,a,a,b,b,a \uparrow a,b,b,a,a,b,b,a) \in \N_{16}$.
Then $w=ab$, $\bar{w}=ba$, and $x$ is sent to $abba + baab \in H^{\otimes 4}\otimes\Z/2\Z$.
\end{example}

\section{Kernel of $\ss$ restricted to $\A_{n,l}^c$ for $l>1$}
\label{sec:Ker_HigherLoop}
We introduce a weight system, and prove Theorem~\ref{thm:HigherLoop} which gives lower bounds on the ranks of the $\Z/2\Z$-modules $\Ker(\ss|_{\tor\A_{2k+1,k}^c})$ and $\Im(\ss|_{\tor\A_{2k+1,k}^c})$ for $k\geq 0$.

One usually constructs a weight system from a Lie algebra over $\mathbb{C}$ (see \cite[Section~6.3]{CDM12} for instance).
However, we need to define it over $\Z$.
Let $R$ be a commutative unital ring, and let $\g$ be a free $R$-module with a basis $\{e_i\}_{i=1}^d$.
Suppose $c_{ijk} \in R$ ($i,j,k=1,\dots,d$) satisfy that 
\begin{align}
 & c_{ijk} = -c_{jik}, \label{eq:AS} \\
 & \sum_{m=1}^d (c_{ijm}c_{mkl} - c_{lim}c_{mjk} + c_{ljm}c_{mik}) =0, \label{eq:IHX} \\
 & c_{iik} = 0, \label{eq:self-loop} \\
 & c_{ijk} = c_{jki} = c_{kij}. \label{eq:cyclic}
\end{align}
These conditions arise from a Lie algebra as follows.
Let $\g$ be a Lie algebra over $R$ with basis $\{e_i\}_{i=1}^d$.
(While $R$ is not a field but a ring, we use the terminology ``Lie algebra''.)
Define the structure constants $c_{ijk} \in R$ by $[e_i,e_j] = \sum_{k}c_{ijk}e_k$ and suppose they satisfy $\delta([e_i,e_k],e_j)=\delta(e_i,[e_k,e_j])$ (see \cite[Appendix~A.1]{CDM12} for example).
Here $\delta$ is the bilinear form defined by $\delta(e_i,e_j)=\delta_{ij}$, where $\delta_{ij}$ denotes the Kronecker delta.
Then one can check that the $c_{ijk}$'s satisfy the conditions \eqref{eq:AS}--\eqref{eq:cyclic}.

\begin{example}
Let $\g$ be a semi-simple Lie algebra over $\mathbb{C}$.
Then the structure constants $c_{ijk}$ associated with an orthonormal basis with respect to the Killing form satisfy the conditions \eqref{eq:AS}--\eqref{eq:cyclic}.
\end{example}

Let $H_R = H_1(\Sigma_{g,1};R)$ and identify $H_R$ with the $R$-module $R\{1^\pm,\dots,g^\pm\}$.
For an $R$-module $M$, let $S(M)$ denote the symmetric algebra over $R$.
Recall that $d = \rank_R\g$.

\begin{definition}
The \emph{weight system} associated with $\g$ is a $\Z$-module homomorphism $W_\g \colon \A_n \to S(H_R \otimes_R \g)$ defined as follows.
Let $J \in \A_n$ be a Jacobi diagram, and let $v_1,\dots,v_m$ (resp.\ $w_1,\dots,w_n$) be the univalent (resp.\ trivalent) vertices of $J$.
Put labels $\{1,\dots,d\}$ on the edges of $J$, and let $c(w_l)=c_{ijk}$ if the edges adjacent to $w_l$ are labeled by $i,j,k$ in the cyclic order of $w_l$.
Then, define $W_\g(J) \in S^m(H_R \otimes_R \g)$ by
\[
W_\g(J) = \sum c(w_1)\cdots c(w_n) (l(v_1)\otimes e_{j_1})\cdots(l(v_m)\otimes e_{j_m}),
\]
where $j_i \in \{1,\dots,d\}$ is the label of the edge adjacent to $v_i$, and the sum is taken over all the ways of assigning labels to edges.
Note that $c(w_l)$ depends only on the cyclic order by \eqref{eq:cyclic}.

In other words, for a Jacobi diagram $J$ without strut, we first put $t=\sum_{i,j,k}c_{ijk}e_i \otimes e_j \otimes e_k \in \g^{\otimes 3}$ at each trivalent vertex.
Note that $t$ is invariant under cyclic permutation by \eqref{eq:cyclic}.
Next, using the bilinear form $\delta$, we take the contraction of the form
\[
\g^{\otimes 3}\otimes\g^{\otimes 3} \to \g^{\otimes 4},\ (e_i\otimes e_j\otimes e_k, e_{i'}\otimes e_{j'}\otimes e_{k'}) \mapsto \delta(e_i,e_{i'})e_j\otimes e_k\otimes e_{j'}\otimes e_{k'}
\]
along every edge connecting two trivalent vertices (see \cite[Section~6.2.1]{CDM12} for instance).
Then, consider the tensor product $l(v_i) \otimes e_{j_i}$ at each univalent vertex $v_i$, and take the summation of the products of $\delta_{ij}$'s and $l(v_i) \otimes e_{j_i}$.
\end{definition}

The weight system $W_\g$ is well-defined since one can check that $W_\g$ preserves the AS, IHX, and self-loop relations by \eqref{eq:AS}, \eqref{eq:IHX} and \eqref{eq:self-loop}, respectively.
For example, to prove that $W_\g$ preserves \eqref{eq:IHX}, we consider
\[
\Igraph{i}{j}{l}{k} - \Hgraph{i}{j}{l}{k} +
\begin{tikzpicture}[scale=0.5, baseline={(0,-0.1)}, densely dashed]
  \draw (-1,1) -- (1,-1) node[at start, anchor=south] {$i$} node[anchor=north] {$k$};
  \draw (-1,-1) -- (1,1) node[at start, anchor=north] {$l$} node[anchor=south] {$j$};
  \draw (-0.5,-0.5) -- (0.5,-0.5);
\end{tikzpicture}\ ,
\]
where the central edge in each diagram is labeled by $m$ which runs over $1,\dots,d$.

\begin{example}
\label{ex:sl2Z}
Let $\g=\Ang{e_1,e_2,e_3}_\Z$ and
\[
c_{ijk} = 
\begin{cases}
 \sgn(ijk) & \text{if $\#\{i,j,k\}=3$,} \\
 0 & \text{if $\#\{i,j,k\}<3$.}
\end{cases}
\]
One can directly check that these $c_{ijk}$'s satisfy \eqref{eq:AS}--\eqref{eq:cyclic}.
Also, it arises from $\mathfrak{sl}(2,\mathbb{C})$ with basis $\{(E+F)/2\sqrt{-1}, H/2\sqrt{-1}, (E-F)/2\}$, where
\[
E=
\begin{pmatrix}
 0 & 1 \\
 0 & 0
\end{pmatrix},\
H=
\begin{pmatrix}
 1 & 0 \\
 0 & -1
\end{pmatrix},\
F=
\begin{pmatrix}
 0 & 0 \\
 1 & 0
\end{pmatrix}.
\]
Here we give an example of computation.
For $J=O(a,b) \in \A_{2,1}^c$, we have
\[
W_\g(O(a,b)) = -2(a\otimes e_1)(b\otimes e_1)-2(a\otimes e_2)(b\otimes e_2)-2(a\otimes e_3)(b\otimes e_3).
\]
Note that the computation of this weight system corresponds to counting edge-colorings (with sign) by three colors $\{1,2,3\}$ such that all the three colors appear around each trivalent vertex.
\end{example}

\begin{example}
\label{ex:A40}
The weight system in Example~\ref{ex:sl2Z} enables us show that the set $\{T(a,b,c,c,b,a), T(b,c,a,a,c,b)\}$ extends to a basis of $\A_{4,0}^c$ when $a,b,c \in \{1^\pm,\dots,g^\pm\}$ are mutually distinct.
Indeed, one can check that the coefficients of $(a\otimes e_1)^2(b\otimes e_2)^2(c\otimes e_2)^2$ and $(a\otimes e_1)^2(b\otimes e_1)^2(c\otimes e_2)^2$ in $W_\g(T(a,b,c,c,b,a)) \in S^6(H\otimes\g)$ are 1 and 0, respectively.
On the other hand, the coefficients in $W_\g(T(b,c,a,a,c,b))$ are 0 and 1, respectively.
\end{example}

In the rest of this section, we consider the weight system in Example~\ref{ex:sl2Z}, and simply write $W$.
By Lemma~\ref{lem:Always2} below, it induces a homomorphism
\[
W_m\colon \A_n\otimes\Q/\Z \to S(H \otimes \Z e_m)\otimes\Q/\Z,\ J \mapsto \frac{1}{2}\pr_m \circ W(J),
\]
where $\pr_m\colon S(H \otimes \g) \to S(H \otimes \Z e_m)$ denotes the projection.

\begin{lemma}
\label{lem:Always2}
Let $m \in \{1,2,3\}$ and $n\geq 1$.
The image of the composite map $\A_n \xrightarrow{W} S(H\otimes\g) \xrightarrow{\pr_m} S(H \otimes \Z e_m)$ is included in $2S(H \otimes \Z e_m)$.
\end{lemma}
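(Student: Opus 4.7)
The plan is to construct a fixed-point-free involution on the set of edge-colorings contributing to $\pr_m\circ W(J)$ and argue that contributions pair up. Since $W$ is $\Z$-linear and $\A_n$ is generated by Jacobi diagrams, it suffices to check the claim on a Jacobi diagram $J$ of $\ideg=n$. Unpacking the definition of $W$, the element $\pr_m\circ W(J)$ is a signed sum over edge-colorings $\phi\colon E(J)\to\{1,2,3\}$ such that (i) every edge incident to a univalent vertex receives the color $m$, and (ii) every trivalent vertex sees three distinct colors on its incident edges; the contribution of $\phi$ is $c(\phi)\prod_{v\in U(J)}(\ell(v)\otimes e_m)$, where $c(\phi)=\prod_{w}c(w)$ runs over the trivalent vertices $w$ of $J$ and each factor $c(w)=c_{\phi(e_1)\phi(e_2)\phi(e_3)}=\pm1$ depends on the cyclic order.

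Writing $\{a,b\}=\{1,2,3\}\setminus\{m\}$, I would define $\sigma$ as the map that swaps the colors $a$ and $b$ on every edge of $J$. Edges colored $m$ (in particular those at univalent vertices) are fixed, and each trivalent vertex still sees all three colors, so $\sigma$ preserves (i) and (ii). Using the antisymmetry $c_{ijk}=-c_{jik}$ together with cyclic invariance $c_{ijk}=c_{jki}$, the transposition $(a\,b)$ acts on the cyclic triple at each trivalent vertex as an odd permutation, so $c(w)$ flips sign; therefore $c(\sigma(\phi))=(-1)^{n}c(\phi)$ while the univalent factors are unchanged.

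Finally, $\sigma$ is fixed-point-free whenever $n\geq 1$: a $\sigma$-fixed coloring would use only the color $m$, contradicting (ii) at any trivalent vertex. Grouping the contributing colorings into orbits of size two, each orbit contributes $2c(\phi)\prod_{v}(\ell(v)\otimes e_m)$ when $n$ is even and $0$ when $n$ is odd; in either case $\pr_m\circ W(J)\in 2S(H\otimes\Z e_m)$. The only delicate point is the sign computation at a trivalent vertex, which is an immediate consequence of the antisymmetry and cyclicity axioms of the structure constants, so I expect no genuine obstacle in carrying out this plan.
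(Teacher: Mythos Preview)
Your proof is correct and follows essentially the same strategy as the paper's: both use the involution on edge-colorings that swaps the two colors in $\{1,2,3\}\setminus\{m\}$. Your write-up is more explicit than the paper's terse version, in particular your sign computation $c(\sigma(\phi))=(-1)^n c(\phi)$ and your check of fixed-point-freeness, but the underlying idea is identical.
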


\begin{proof}
We may assume $m=1$, and use the comment at the end of Example~\ref{ex:sl2Z}.
For each edge-coloring of a Jacobi diagram $J \in \A_n$ such that any edge adjacent to a univalent vertex is colored by $1$, there is an edge-coloring obtained by replacing all 2's (resp.\ 3's) with 3 (resp.\ 2).
Therefore, the coefficient of $x\otimes e_1$ is even for $x \in H$.
\end{proof}

We now give a useful relation between the weight system and the map $\bu$, which is essentially written, for example, in \cite[Lemma~6.15 and Remark~6.16]{CDM12}.

\begin{lemma}
\label{lem:Weight-bu}
$W(\bu^{k}(J)) = (-1)^kW(J)$ for any $k \geq 0$, where $\bu^{k}\colon \A_{n,l}^c \to \A_{n+2k,l+k}^c$ denotes the $k$ times composition of $\bu$.
\end{lemma}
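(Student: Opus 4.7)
The plan is to reduce to the case $k=1$ by iteration, and then to establish $W(\bu(J)) = -W(J)$ by a purely local calculation at the blown-up vertex. Since $W$ is linear on $\A_n$, and the map $\bu$ is well defined independently of which trivalent vertex is blown up and modifies a diagram only in a small neighborhood of one trivalent vertex, it is enough to compare the weight-system contribution of the single replaced trivalent vertex with that of the inserted triangle.

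Concretely, fix a trivalent vertex $v$ of $J$ with cyclic order $(a, b, c)$ on its three incident edges, so that in the edge-coloring expansion defining $W$, the local factor at $v$ is $c_{abc}$. Reading the cyclic orders at the three new trivalent vertices from the planar picture in Figure~\ref{fig:bu}, with new internal edges of the triangle labeled $p, q, r$, the local factor after blowing up $v$ becomes
\[
T_{abc} \;=\; \sum_{p, q, r = 1}^{3} c_{a, r, q}\, c_{b, p, r}\, c_{c, q, p}.
\]
Hence the desired identity $W(\bu(J)) = -W(J)$ reduces to the single tensor identity $T_{abc} = -c_{abc}$ for all $a, b, c \in \{1,2,3\}$.

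To verify this, I would first observe that $T_{abc}$ is totally antisymmetric in $(a, b, c)$: the $\Z/3$-rotational symmetry of the triangle together with the cyclic property \eqref{eq:cyclic} of $c_{ijk}$ gives cyclic symmetry, while any reflection reverses the cyclic order at each of the three new vertices and therefore introduces the sign $(-1)^3 = -1$ via the AS property \eqref{eq:AS}. Since the space of totally antisymmetric trilinear forms on $\Z\{e_1, e_2, e_3\}$ has rank one and is spanned by $c$, there exists $\lambda \in \Z$ with $T = \lambda \cdot c$. Computing at $(a,b,c) = (1,2,3)$: any choice of $(p,q,r)$ other than $(1,2,3)$ forces a repeated index in one of the three factors and hence vanishes, while $(p,q,r)=(1,2,3)$ contributes $c_{132}\, c_{213}\, c_{321} = (-1)(-1)(-1) = -1$, pinning down $\lambda = -1$.

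The main obstacle is purely bookkeeping: to write down correctly, from the planar embedding used to define $\bu$, the cyclic orders at the three new trivalent vertices so that the formula above for $T_{abc}$ genuinely exhibits the claimed antisymmetry in $(a,b,c)$. Once that orientation check is in place, the identity $T = -c$ is forced by the single computation above, and the general statement $W(\bu^k(J)) = (-1)^k W(J)$ follows by a routine induction on $k$.
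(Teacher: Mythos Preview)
Your proof is correct and follows essentially the same approach as the paper: reduce to $k=1$ and perform a local computation at the blown-up vertex using the specific $\mathfrak{sl}_2$ structure constants. The paper phrases this local step combinatorially---each proper $3$-edge-coloring of $J$ extends uniquely to one of $\bu(J)$, with opposite sign---while you carry out the equivalent algebraic calculation $T_{abc}=-c_{abc}$ via antisymmetry and a single evaluation, but the content is the same.
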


\begin{proof}
It suffices to prove the case $k=1$.
By the comment at the end of Example~\ref{ex:sl2Z}, for each edge-coloring of $J$, there is a unique edge-coloring of $\bu(J)$ extending it.
This gives a bijection between the edge-colorings of $J$ and those of $\bu(J)$, with opposite sign.
This implies $W(\bu(J)) = -W(J)$.
\end{proof}

Corollary~\ref{cor:buTaba} and Proposition~\ref{prop:LRzip} below are keys to prove Theorem~\ref{thm:HigherLoop}.

\begin{corollary}
\label{cor:buTaba}
For any $a, b \in \{1^\pm,\dots,g^\pm\}$, 
\[
W_1\circ\bar{z}_{2k+2,k+1}\circ\ss\circ\bu^{k}(T(a,b,a)) = \frac{1}{2}(a\otimes e_1)(b\otimes e_1) \in S(H \otimes \Z e_1)\otimes\Q/\Z.
\]
In particular, $\bar{z}_{2k+2,k+1}$ is non-trivial.
\end{corollary}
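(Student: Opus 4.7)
The plan is to use the commutative diagram $\bar{z}_{n+1}\circ\ss=(\id\otimes\tfrac{1}{2})\circ\delta$ of Section~\ref{subsec:NSS_invariant} together with Lemma~\ref{lem:Weight-bu} to reduce the computation to the base case $k=0$. Write $J_k:=\bu^k(T(a,b,a))\in\A_{2k+1,k}^c$ and split $\delta=\delta'+\delta''$. Examining Definition~\ref{def:delta}, each of the two terms of $\delta_v(J)$ only modifies a leaf together with its incident edge and produces no new cycle; hence $\delta'$ preserves the first Betti number. On the other hand, $\delta''_{vw}$ glues a Y joining two distinct univalent vertices of a connected Jacobi diagram, which always closes exactly one new cycle, so $\delta''$ raises the first Betti number by one. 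Thus the $(k+1)$-loop component of $\delta(J_k)$ is precisely $\delta''(J_k)$, and since the only same-label pair in $U(T(a,b,a))$ is the pair of two $a$-leaves, this is a single Jacobi diagram.

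Next I would observe that $\bu$ touches only trivalent vertices while $\delta''$ only touches univalent vertices, so they commute on $\A^c$: $\delta''(\bu^k(T(a,b,a)))=\bu^k(\delta''(T(a,b,a)))$ up to sign. Merging the two $a$-leaves of the Y-tree $T(a,b,a)$ leaves a pair of parallel edges between the original trivalent vertex (still carrying the external leaf $b$) and the newly created Y-vertex (carrying the new external leaf $a$); this bigon-with-two-legs is precisely the 2-vertex circle $O(a,b)\in\A_{2,1}^c$, so $\delta''(T(a,b,a))=\pm O(a,b)$. Consequently
\[
\bar{z}_{2k+2,k+1}\circ\ss\circ\bu^k(T(a,b,a))=\pm\tfrac{1}{2}\,\bu^k(O(a,b))\in\A_{2k+2,k+1}^c\otimes\Q/\Z.
\]

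Finally, Lemma~\ref{lem:Weight-bu} gives $W_1(\bu^k(O(a,b)))=(-1)^k W_1(O(a,b))$. A direct count of admissible $\{1,2,3\}$-edge colorings of the bigon $O(a,b)$ using the structure constants $c_{ijk}=\sgn(ijk)$ of Example~\ref{ex:sl2Z} — forcing the two leaves to share a common color $j\in\{1,2,3\}$ and the two bigon edges to carry the remaining two colors in the two possible orders — yields $W(O(a,b))=-2\sum_{j=1}^{3}(a\otimes e_{j})(b\otimes e_{j})$ and hence $W_1(O(a,b))=-(a\otimes e_{1})(b\otimes e_{1})$. Substituting gives $\pm\tfrac{1}{2}(-1)^{k+1}(a\otimes e_{1})(b\otimes e_{1})$, and every $\pm 1$ factor disappears in $S(H\otimes\Z e_{1})\otimes\Q/\Z$ since $-\tfrac{1}{2}=\tfrac{1}{2}$ in $\Q/\Z$. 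This produces the asserted value $\tfrac{1}{2}(a\otimes e_{1})(b\otimes e_{1})$, whose non-vanishing establishes the ``in particular'' clause. The main obstacle is confirming the loop behavior of $\delta'$ and $\delta''$ and their commutation with $\bu$; both points reduce to reading the topology off the pictures in Definition~\ref{def:delta} and Figure~\ref{fig:bu} and require no further calculation.
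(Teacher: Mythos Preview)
Your proof is correct and follows essentially the same route as the paper's: both reduce $\bar z_{2k+2,k+1}\circ\ss\circ\bu^k(T(a,b,a))$ to $\tfrac12\bu^k(\delta''(T(a,b,a)))=\tfrac12\bu^k(O(a,b))$, then apply Lemma~\ref{lem:Weight-bu} and the computation of $W(O(a,b))$ from Example~\ref{ex:sl2Z}. The only cosmetic gap is the phrase ``the only same-label pair'' when $a=b$, but in that case the three pairs each yield $O(a,a)$ and $3\equiv 1$ in $\Z/2\Z$, so the conclusion is unchanged.
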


\begin{proof}
Let $J=T(a,b,a)$.
By \cite[Theorem~1]{NSS21}, we have
\[
\bar{z}_{2k+2,k+1}\circ\ss\circ\bu^{k}(J) = \frac{1}{2}\delta''(\bu^{k}(J)) = \bu^{k}\left( \frac{1}{2}\delta''(J) \right).
\]
If follows from Lemma~\ref{lem:Weight-bu} and $\delta''(J) = O(a,b)$ that $W_1\circ\bu^{k}(\frac{1}{2}\delta''(J)) = W_1(\frac{1}{2}O(a,b))$.
Finally, this is equal to $\frac{1}{2}(a\otimes e_1)(b\otimes e_1)$ by Example~\ref{ex:sl2Z}.
\end{proof}

\begin{proposition}
\label{prop:LRzip}
$\ss(\bu^{k}(T(a,b,a))) = \ss(\bu^{k}(T(b,a,b))) \in Y_{2k+1}\I\C/Y_{2k+2}$.
\end{proposition}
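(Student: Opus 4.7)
My plan is to reduce the statement, via 2-torsion, to the vanishing of a specific sum, and to produce that vanishing by applying Theorem~\ref{thm:symrelation}(\ref{item:even}) to a symmetric representative of $\bu^{k-1}(O(a,b))$.

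First, both $\bu^k(T(a,b,a))$ and $\bu^k(T(b,a,b))$ are 2-torsion in $\A_{2k+1,k}^c$: the AS relation at the central trivalent vertex of $T(a,b,a)$, transposing the two identically labeled $a$-legs, reproduces the same diagram with a minus sign, so $2T(a,b,a)=0$ in $\A_{1,0}^c$, and $\Z$-linearity of $\bu$ propagates this. Hence the claimed equality is equivalent to
\[
\ss(\bu^k(T(a,b,a))) + \ss(\bu^k(T(b,a,b))) = 0 \in Y_{2k+1}\I\C/Y_{2k+2}.
\]
For $k \geq 1$, blowing up the unique trivalent vertex of $T(x,y,z)$ produces $O(x,y,z)$, so $\bu^k(T(a,b,a)) = \bu^{k-1}(O(a,b,a))$ and likewise for $(b,a,b)$.

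Next, I would construct a symmetric Jacobi diagram $J \in \A_{2k,k}^c$ representing $\bu^{k-1}(O(a,b))$ by iteration. The diagram $O(a,b)$ admits a reflection $r$ fixing both trivalent vertices and both leaves while swapping the two bigon edges; blowing up an $r$-fixed trivalent vertex produces a triangle whose three new vertices inherit a reflection (two swapped and one fixed), so iterating this $k-1$ times yields a symmetric $J$ with both leaves $a,b$ fixed by $r$. Since the two leaves have distinct labels, $U^-(J) = \emptyset$, and Theorem~\ref{thm:symrelation}(\ref{item:even}) gives
\[
\epsilon_a \ss(\tilde{\delta}_a(\tilde{J})) + \epsilon_b \ss(\tilde{\delta}_b(\tilde{J})) = 0 \in Y_{2k+1}\I\C/Y_{2k+2}
\]
for some signs $\epsilon_a, \epsilon_b \in \{\pm 1\}$ determined by the good lift.

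The crux is then to identify $\tilde{\delta}_a(\tilde{J}) = \pm \bu^{k-1}(O(a,b,a))$ and $\tilde{\delta}_b(\tilde{J}) = \pm \bu^{k-1}(O(b,a,b))$ in $\A_{2k+1,k}^c$. For $k=1$ this is $\tilde{\delta}_a(O(a,b)) = \pm O(a,a,b) = \pm O(a,b,a)$ by cyclic symmetry of $O$. For $k=2$, splitting yields $\theta(a,a;;b)$, which equals $\bu(O(a,b,a))$ in $\A_{5,2}^c$ via the inverse $\bd$ of Theorem~\ref{thm:bu_isom} together with $\ang{\Theta_5^{\geq 1}} = 0$ by Lemma~\ref{lem:Theta111}. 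In general, the splitting $\tilde{\delta}_a$ (a local operation at the $a$-adjacent vertex) and the blow-up $\bu$ (a local operation at a trivalent vertex) commute at the level of $\A^c$, yielding $\tilde{\delta}_a(\bu^{k-1}(O(a,b))) = \bu^{k-1}(\tilde{\delta}_a(O(a,b))) = \bu^{k-1}(O(a,b,a))$. Substituting into the refined relation and invoking the 2-torsion observation completes the proof.

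The main obstacle is rigorously verifying the commutativity $\tilde{\delta}_a \circ \bu^{k-1} = \bu^{k-1} \circ \tilde{\delta}_a$ in $\A_{2k+1,k}^c$ for $k \geq 3$, since Theorem~\ref{thm:bu_isom} provides a concrete inverse only in the 2-loop setting. A combinatorial lemma showing that local splitting and local blow-up on Jacobi diagrams commute modulo AS and IHX would suffice; the case $k=0$ additionally requires separate treatment, perhaps via Corollary~\ref{cor:1LoopRel} together with the structure of the Johnson homomorphism on $\A_{1,0}^c$.
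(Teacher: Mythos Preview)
Your approach is essentially the paper's own proof, just routed through the packaged Theorem~\ref{thm:symrelation}(\ref{item:even}) rather than its ingredients. The paper takes a lift $J\in\Z\widetilde{\J}_{2k,k}^c$ of $\bu^{k-1}(O(a,b))$, uses $\lss(J)=\lss(\rev(J))$ from \eqref{eq:rev}, and then applies the refined AS relation (Corollary~\ref{cor:AS}) once at each of the two univalent vertices to obtain
\[
\lss(J)=\lss(\rev(J))=\lss(J)+\ss(\bu^{k-1}(O(a,b,a)))+\ss(\bu^{k-1}(O(b,a,b))),
\]
which is exactly the relation you extract from Theorem~\ref{thm:symrelation}(\ref{item:even}) after observing that both univalent vertices are $r$-fixed and $U^-(J)=\emptyset$.

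The ``main obstacle'' you flag is not one. The operation $\tilde{\delta}_v$ is local to the edge incident to $v$ and its adjacent trivalent vertex, while $\bu$ may be applied at \emph{any} trivalent vertex and is independent of that choice in $\A^c$. Representing $\bu^{k-1}(O(a,b))$ by blowing up only at trivalent vertices distinct from the one adjacent to the $a$-leg, the two local modifications live in disjoint neighborhoods and commute on the level of Jacobi diagrams themselves, before any AS/IHX relation is imposed. Hence $\tilde{\delta}_{v_a}(\tilde{J})=\bu^{k-1}(O(a,b,a))$ in $\A_{2k+1,k}^c$ with no further argument needed; you do not need Theorem~\ref{thm:bu_isom} or any $2$-loop inverse for $k\geq 3$. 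Your signs $\epsilon_a,\epsilon_b$ are also harmless: in Theorem~\ref{thm:symrelation}(\ref{item:even}) the coefficients are $+1$, and in any case both terms are $2$-torsion. Finally, note that the paper's short proof, like yours, implicitly treats $k\geq 1$ (it invokes $\bu^{k-1}$), so your caveat about $k=0$ is not a divergence from the paper.
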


\begin{proof}
Let $J \in \Z\widetilde{\J}_{2k,k}^c$ be a lift of $\bu^{k-1}(O(a,b))$.
By Corollary~\ref{cor:AS}, we have
\begin{align*}
\lss(J)
&= \lss(\rev(J)) \\
&= \lss(J) + \ss(\bu^{k-1}(O(a,b,a))) + \ss(\bu^{k-1}(O(b,a,b))).
\end{align*}
This completes the proof since $\bu^{k-1}(O(a,b,a)) = \bu^{k}(T(a,b,a))$.
\end{proof}

In Appendix~\ref{sec:Alt_Proof}, we give an alternative proof of Proposition~\ref{prop:LRzip} without using Corollary~\ref{cor:AS}.

\begin{theorem}
\label{thm:HigherLoop}
For $g\geq 1$ and $k\geq 0$, the ranks of the $\Z/2\Z$-modules $\Ker(\ss|_{\tor\A_{2k+1,k}^c})$ and $\Im(\ss|_{\tor\A_{2k+1,k}^c})$ are respectively greater than or equal to $g(2g-1)$ and $g(2g+1)$.
Furthermore, the ranks are exactly $g(2g-1)$ and $g(2g+1)$ when $k=0,1,2$.
\end{theorem}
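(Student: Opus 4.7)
The plan is to prove the two lower bounds independently by combining Corollary~\ref{cor:buTaba} with Proposition~\ref{prop:LRzip}, and then to upgrade them to equalities for $k\leq 2$ via a dimension count in $\tor\A_{2k+1,k}^c$. The common ingredient is the element $\bu^{k}(T(a,b,a))\in\A_{2k+1,k}^c$ for each ordered pair $(a,b)\in\{1^\pm,\ldots,g^\pm\}^2$; since the AS relation applied to the swap of the two $a$-labels forces $T(a,b,a)$ to be $2$-torsion in $\A_{1,0}^c$, its image under $\bu^{k}$ lies in $\tor\A_{2k+1,k}^c$.

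For the image bound, I would apply Corollary~\ref{cor:buTaba}, which asserts
\[
W_1\circ\bar{z}_{2k+2,k+1}\circ\ss\circ\bu^{k}(T(a,b,a))=\tfrac{1}{2}(a\otimes e_1)(b\otimes e_1)\in S(H\otimes\Z e_1)\otimes\Q/\Z.
\]
Since the monomials $(a\otimes e_1)(b\otimes e_1)$ indexed by unordered pairs $\{a,b\}$ (with $a=b$ allowed) form a free basis of $S^{2}(H\otimes\Z e_1)$, this supplies $\binom{2g+1}{2}=g(2g+1)$ linearly independent elements of $\Im(\ss|_{\tor\A_{2k+1,k}^c})$.

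For the kernel bound, Proposition~\ref{prop:LRzip} gives $\ss\circ\bu^{k}(T(a,b,a)+T(b,a,b))=0$, furnishing $g(2g-1)$ candidate kernel elements indexed by unordered pairs $\{a,b\}$ with $a\neq b$. To see they are linearly independent in $\tor\A_{2k+1,k}^c$, I set $V=\Z/2\Z\langle T(a,b,a):(a,b)\in\{1^\pm,\ldots,g^\pm\}^2\rangle$ of dimension $4g^2$ and write $\phi=\bu^{k}|_V$. The image argument forces $\dim\Im(\ss\circ\phi)=g(2g+1)$, so $\dim\Ker(\ss\circ\phi)=g(2g-1)$, spanned precisely by the candidates above. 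These descend to linearly independent elements of $\Ker(\ss|_{\tor\A_{2k+1,k}^c})$ provided $\phi$ is injective on $\Ker(\ss\circ\phi)$. This is immediate for $k=0$; for $k=1$, the assignment $T(a,b,a)\mapsto O(a,b,a)$ is a bijection between the generators of $\tor\A_{1,0}^c$ and $\tor\A_{3,1}^c$, both of $\Z/2\Z$-rank $4g^2$; and for $k=2$, Lemma~\ref{lem:bu_small} together with Lemma~\ref{lem:Theta111} upgrades this to the isomorphism $\bu\colon\A_{3,1}^c\xrightarrow{\sim}\A_{5,2}^c$.

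The equality for $k=0,1,2$ then follows because $\bu^{k}$ identifies $\tor\A_{1,0}^c$ with $\tor\A_{2k+1,k}^c$ as $\Z/2\Z$-vector spaces of dimension $4g^2$, and the two lower bounds sum to the full rank. The main obstacle is the general case $k\geq 3$, where injectivity of $\phi$ on $\Ker(\ss\circ\phi)$ is no longer provided by a structural isomorphism, and the natural candidate for a pre-$\ss$ detection, the weight system $W$, vanishes on $T(a,b,a)$: the total antisymmetry of $c_{ijk}$ forced by \eqref{eq:AS} and \eqref{eq:cyclic} causes the pair $c_{ijk}$, $c_{kji}$ to cancel in every term with a repeated label. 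I would attack this by extending the blow-down map $\bd$ beyond the two-loop range, producing a one-sided inverse of $\bu^{k-1}$ on an appropriate quotient of $\A_{2k+1,k}^c$ and reducing the question to the already verified case $k=1$.
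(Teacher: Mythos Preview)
Your argument for the image bound and for the equalities when $k\le 2$ is essentially the paper's. The gap is in the kernel lower bound for general $k$: you reduce linear independence of the elements $\bu^{k}(T(a,b,a))+\bu^{k}(T(b,a,b))$ to injectivity of $\phi=\bu^{k}$ on $\Ker(\ss\circ\phi)$, which you can only verify for $k\le 2$, and you correctly note that the weight system $W$ does not help here. As written, the first sentence of the theorem is therefore unproved for $k\ge 3$.

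The paper avoids this difficulty entirely by a label argument, and you have all the ingredients for it. The module $\A_{2k+1,k}^c$ splits as a direct sum over the multisets of labels attached to univalent vertices, since the AS, IHX, and self-loop relations preserve this multiset. For $a\ne b$ the diagram $\bu^{k}(T(a,b,a))$ lies in the summand indexed by $\{a,a,b\}$ while $\bu^{k}(T(b,a,b))$ lies in that indexed by $\{a,b,b\}$, and these are distinct; moreover distinct unordered pairs $\{a,b\}$ give rise to pairwise disjoint pairs of summands. Corollary~\ref{cor:buTaba} already tells you that $\bu^{k}(T(a,b,a))\ne 0$ for every $k$ (its image under $\ss$ is detected by $W_1\circ\bar z_{2k+2,k+1}$), so each summand $\bu^{k}(T(a,b,a))+\bu^{k}(T(b,a,b))$ is nonzero and supported on its own pair of label components. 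Linear independence of the $\binom{2g}{2}$ kernel elements follows immediately, for all $k\ge 0$, with no need to control $\Ker\bu^{k}$ or to extend $\bd$.
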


\begin{proof}
In this proof, we write $\ss_{2k+1,k}$ for the restriction map $\ss|_{\tor\A_{2k+1,k}^c}$.
It follows from Proposition~\ref{prop:LRzip} that
\[
\bu^{k}(T(a,b,a)) - \bu^{k}(T(b,a,b)) \in \Ker\ss.
\]
Here, by Corollary~\ref{cor:buTaba}, one has $\bu^{k}(T(a,b,a)) \neq 0$.
Comparing labels, we conclude that the above element is non-trivial and $\rank\Ker\ss_{2k+1,k} \geq \binom{2g}{2} = g(2g-1)$.
Also, Corollary~\ref{cor:buTaba} implies that $\rank\Im\ss_{2k+1,k} \geq g(2g-1)+2g = g(2g+1)$.

We finally show that the above inequality is an equality when $k=0,1,2$.
Since we have the exact sequence
\[
0 \to \Ker\ss_{2k+1,k} \to \tor\A_{2k+1,k}^c \to \Im\ss_{2k+1,k} \to 0,
\]
it suffices to see that $\rank\tor\A_{2k+1,k}^c \leq 4g^2$.
By Lemma~\ref{lem:bu_small}, the cases $k=1,2$ are reduced to the case $k=0$.
Then the proof is completed by the fact that $\tor\A_{1,0}^c = \Ang{T(a,b,a) \mid a, b \in \{1^\pm,\dots,g^\pm\}}$.
\end{proof}

\appendix
\section{Alternative proof of Proposition~\ref{prop:LRzip}}
\label{sec:Alt_Proof}
In this appendix, we give an alternative proof of Proposition~\ref{prop:LRzip} by generalizing the proof of \cite[Lemma~6.6(1)]{NSS21}.
The key ingredients are twisted leaves and the zip construction. 
Every box in a clasper $G$ has one output end and two input ends.
To apply the zip construction to $G$, we need to specify a marking on $G$, which is a set of input ends of boxes.
In the following proof, we do not specify markings but simply indicate which boxes are used.
Here we review a useful lemma. 

\begin{lemma}[{\cite[Lemma~A.6]{MaMe13}}]
\label{lem:TwistLemma}
The following equivalences among claspers hold:
\[
\begin{tikzpicture}[scale=0.3, baseline={(0,-0.1)}]
 \draw (-3,0) -- (-1,0);
 \draw (-1,-2) rectangle (0,2);
 \draw (-90:1) arc (-90:90:1);
 \draw (1,0) -- (3,0);
\end{tikzpicture}
\sim
\begin{tikzpicture}[scale=0.3, baseline={(0,0.2)}]
 \draw (-2,0) -- (2,0);
 \draw (0,0) -- (0,2) node[anchor= south] {$\vtwist$};
\end{tikzpicture}
\sim
\begin{tikzpicture}[scale=0.3, baseline={(0,-0.1)}]
 \draw (-3,0) -- (-1,0);
 \draw (90:1) arc (90:270:1);
 \draw (0,-2) rectangle (1,2);
 \draw (1,0) -- (3,0);
\end{tikzpicture}
\quad
\text{and}
\quad
\begin{tikzpicture}[scale=0.3, baseline={(0,0.1)}]
 \draw (0,1) -- (0,2) node[anchor= south] {$\vtwist$};
 \draw (-2,0) rectangle (2,1);
 \draw (-1,0) -- (-1,-1);
 \draw (1,0) -- (1,-1);
\end{tikzpicture}
\sim
\begin{tikzpicture}[scale=0.3, baseline={(0,0.1)}]
 \draw (0,0) rectangle (4,1);
 \draw (5,0) rectangle (9,1);
 \draw (1,1) -- (1,2) node[anchor= south] {$\vtwist$};
 \draw (8,1) -- (8,2) node[anchor= south] {$\vtwist$};
 \draw (3,1) arc (180:0:1.5);
 \draw (2,0) -- (2,-1);
 \draw (7,0) -- (7,-1);
\end{tikzpicture}\ .
\]
\end{lemma}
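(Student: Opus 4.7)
The proof of Lemma~\ref{lem:TwistLemma} consists of local clasper calculus computations using the zip construction together with the elementary moves of \cite{Hab00C}. Since the statement is cited from \cite[Lemma~A.6]{MaMe13}, the plan is essentially to reconstruct its short proof here.

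For the first (triple) equivalence, I would interpret the middle clasper --- a twisted edge --- as a degenerate box whose two input ends have been joined by a trivial loop. Starting from the left configuration (a box whose two input ends are connected by a closed arc), I would apply the zip construction using that closed arc as marking. The zip duplicates the output edge of the box, and the two duplicates recombine, after an application of Move~$3$ of \cite{Hab00C} to erase the trivial subclasper produced, into a single twisted edge. The equivalence with the right-hand configuration is symmetric and follows by performing the same zip-and-cancel on the mirror-image box.

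For the second equivalence, I would apply the zip construction directly to the left-hand box, taking one of its input ends as marking. The zip duplicates the ambient local structure to produce two copies of the box joined through an arc across the top inputs, exactly as in the right-hand picture. The half-twist on the original output edge propagates to a half-twist on each of the two new outputs, because the zip duplicates local structure (including half-twists) on each copy, and the two remaining bottom input edges remain separated.

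The main obstacle is the careful orientation bookkeeping: I must verify that a positive half-twist on the original output indeed yields positive half-twists on both new outputs, rather than one positive and one negative half-twist. This is a local check on the oriented surface defining the clasper near the box, and amounts to the observation that the duplicated copies inherit the sign of the twist from the original. Once this is handled, both equivalences follow from a short sequence of elementary moves, so the proof is ultimately a matter of diagrammatic verification.
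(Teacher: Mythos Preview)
The paper does not give its own proof of this lemma: it is stated with the citation \cite[Lemma~A.6]{MaMe13} and used as a black box in Appendix~\ref{sec:Alt_Proof}. So there is nothing in the paper to compare your argument against; the authors simply defer to Massuyeau--Meilhan.

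As for your sketch itself: it is a reasonable outline but not yet a proof. For the first equivalence, invoking the zip construction on a box whose input is already a closed loop is not the standard route and you have not explained what the marking is or why the output is a single half-twisted edge rather than something else; the actual argument in \cite{MaMe13} is a short direct manipulation using Habiro's elementary moves (in particular Moves~2 and 12 of \cite{Hab00C}), not a zip. For the second equivalence your idea is closer to the mark, but the assertion that ``the zip duplicates local structure including half-twists'' is precisely the content to be proved, not something you can invoke; you would need to actually carry out the zip and track the framings. If you want a self-contained proof, consult \cite[Appendix~A]{MaMe13} directly rather than reconstructing it from first principles.
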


\begin{proof}[Proof of Proposition~\ref{prop:LRzip}]
Consider the clasper with $k+1$ twisted leaves (also called special leaves)
\[
\begin{tikzpicture}[scale=0.3, baseline={(0,0)}]
 \draw (5,0) -- (0,0) node[draw, circle, anchor=east] {$a$};
 \draw (2,0) -- (2,2) node[anchor= south] {$\vtwist$};
 \draw (4,0) -- (4,2) node[anchor= south] {$\vtwist$};
 \node at (6,0) {$\cdots$};
 \draw (8,0) -- (8,2) node[anchor= south] {$\vtwist$};
 \draw (7,0) -- (10,0) node[draw, circle, anchor=west] {$b$};
\end{tikzpicture}\ .
\]
We first apply Lemma~\ref{lem:TwistLemma} to the leftmost twisted leaf, and then a new box is created.
Applying \cite[Move~11]{Hab00C} to this box $k$ times from left to right, we obtain the equivalent clasper with $2k+1$ boxes
\[
\begin{tikzpicture}[scale=0.3, baseline={(0,0)}]
 \draw (1,1) -- (0,1) node[draw, circle, anchor=east] {$a$};
 \draw (2,2) arc (90:270:1);
 \draw (2,0) -- (9,0);
 \draw (2,1) rectangle (3,5);
 \node at (2.5,3) {\rotatebox{90}{$1$}};
 \draw (3,2) -- (9,2);
 \draw (3,4) -- (4.5,4);
 \draw (4.5,4) arc (180:420:0.5);
 \draw (4.5,4) arc (180:120:0.5);
 \draw (5,0) -- (5,1.5);
 \draw (5,2.5) -- (5,3);
 \draw (5,4) -- (5,6);
 \draw (4,6) rectangle (8,7);
 \node at (6,6.5) {$2$};[xshift=10cm]
 \draw (6,7) -- (6,8) node[anchor= south] {$\vtwist$};
 \draw (7,2) -- (7,6);
 \node at (10,0) {$\cdots$};
 \node at (10,2) {$\cdots$};
 \draw (11,0) -- (12,0);
 \draw (11,2) -- (12,2);
\begin{scope}[xshift=10cm]
 \draw (2,0) -- (9,0);
 \draw (2,1) rectangle (3,5);
 \node at (2.5,3) {\rotatebox{90}{$2k-1$}};
 \draw (3,2) -- (9,2);
 \draw (3,4) -- (4.5,4);
 \draw (4.5,4) arc (180:420:0.5);
 \draw (4.5,4) arc (180:120:0.5);
 \draw (5,0) -- (5,1.5);
 \draw (5,2.5) -- (5,3);
 \draw (5,4) -- (5,6);
 \draw (4,6) rectangle (8,7);
 \node at (6,6.5) {$2k$};
 \draw (6,7) -- (6,8) node[anchor= south] {$\vtwist$};
 \draw (7,2) -- (7,6);
\end{scope}
 \draw (19,-1) rectangle (20,3);
 \draw (20,1) -- (21,1) node[inner sep=3pt, draw, circle, anchor= west] {$b$};
\end{tikzpicture}\ .
\]
By Lemma~\ref{lem:TwistLemma}, this clasper is equivalent to
\[
\begin{tikzpicture}[scale=0.3, baseline={(0,0)}]
 \draw (1,1) -- (0,1) node[draw, circle, anchor=east] {$a$};
 \draw (2,2) arc (90:270:1);
 \draw (2,0) -- (9,0);
 \draw (2,1) rectangle (3,5);
 \node at (2.5,3) {\rotatebox{90}{$1$}};
 \draw (3,2) -- (9,2);
 \draw (3,4) -- (4.5,4);
 \draw (4.5,4) arc (180:420:0.5);
 \draw (4.5,4) arc (180:120:0.5);
 \draw (5,0) -- (5,1.5);
 \draw (5,2.5) -- (5,3);
 \draw (5,4) -- (5,6);
 \draw (4,6) rectangle (8,7);
 \node at (6,6.5) {$2$};[xshift=10cm]
 \draw (6,7) -- (6,8) node[anchor= south] {$\vtwist$};
 \draw (7,2) -- (7,6);
 \node at (10,0) {$\cdots$};
 \node at (10,2) {$\cdots$};
 \draw (11,0) -- (12,0);
 \draw (11,2) -- (12,2);
\begin{scope}[xshift=10cm]
 \draw (2,0) -- (9,0);
 \draw (2,1) rectangle (3,5);
 \node at (2.5,3) {\rotatebox{90}{$2k-1$}};
 \draw (3,2) -- (9,2);
 \draw[rounded corners] (3,4) -- (3.5,4) -- (3.5,10) -- (6.5,10) -- (6.5,8.5);
 \draw (5,0) -- (5,1.5);
 \draw (5,2.5) -- (5,6);
 \draw (4,6) rectangle (6,7);
 \draw (4.5,7) -- (4.5,8) node[anchor= south] {$\vtwist$};
 \draw (8,2) -- (8,6);
 \draw (7,6) rectangle (9,7);
 \node at (10.5,6.5) {\tiny$2k+1$};
 \draw (8.5,7) -- (8.5,8) node[anchor= south] {$\vtwist$};
 \draw (5.5,7) arc (180:140:1);
 \draw (7.5,7) arc (0:100:1);
 \draw (6.5,8.5) arc (90:300:0.5);
 \draw (6.5,8.5) arc (90:30:0.5);
\end{scope}
 \draw (19,-1) rectangle (20,3);
 \draw (20,1) -- (21,1) node[inner sep=3pt, draw, circle, anchor= west] {$b$};
\end{tikzpicture}\ .
\]
Apply the zip construction to the $k+1$ boxes with odd numbers so that one obtains a graph clasper $G$ of degree $2k+1$ arising from the left-top input edge of Box~$2k+1$ after using \cite[Move~2]{Hab00C}.
Hence, by \cite[Lemma~A.1]{MaMe13} (or \cite[Moves~11 and 12]{Hab00C}), $G$ can be separated from the leaf connected to the right-top input edge of Box~$2k-1$ up to $Y_{2k+2}$-equivalence.
We next apply the same zip construction backward, and then we delete the leaf and Box~$2k-1$ by \cite[Move~3]{Hab00C}:
\[
\begin{tikzpicture}[scale=0.3, baseline={(0,0)}]
 \draw (1,1) -- (0,1) node[draw, circle, anchor=east] {$a$};
 \draw (2,2) arc (90:270:1);
 \draw (2,0) -- (9,0);
 \draw (2,1) rectangle (3,5);
 \node at (2.5,3) {\rotatebox{90}{$1$}};
 \draw (3,2) -- (9,2);
 \draw (3,4) -- (4.5,4);
 \draw (4.5,4) arc (180:420:0.5);
 \draw (4.5,4) arc (180:120:0.5);
 \draw (5,0) -- (5,1.5);
 \draw (5,2.5) -- (5,3);
 \draw (5,4) -- (5,6);
 \draw (4,6) rectangle (8,7);
 \node at (6,6.5) {$2$};[xshift=10cm]
 \draw (6,7) -- (6,8) node[anchor= south] {$\vtwist$};
 \draw (7,2) -- (7,6);
 \node at (10,0) {$\cdots$};
 \node at (10,2) {$\cdots$};
 \draw (11,0) -- (12,0);
 \draw (11,2) -- (12,2);
\begin{scope}[xshift=10cm]
 \draw (2,0) -- (9,0);
 \draw (2,2) -- (9,2);
 \draw (5,0) -- (5,1.5);
 \draw (5,2.5) -- (5,6);
 \draw (4,6) rectangle (6,7);
 \draw (4.5,7) -- (4.5,8) node[anchor= south] {$\vtwist$};
 \draw (8,2) -- (8,6);
 \draw (7,6) rectangle (9,7);
 \node at (10.5,6.5) {\tiny$2k+1$};
 \draw (8.5,7) -- (8.5,8) node[anchor= south] {$\vtwist$};
 \draw (5.5,7) arc (180:0:1);
\end{scope}
 \draw (19,-1) rectangle (20,3);
 \draw (20,1) -- (21,1) node[inner sep=3pt, draw, circle, anchor= west] {$b$};
\end{tikzpicture}\ .
\]
In the same manner, one can delete Boxes $2k-3,\dots,3$ and $1$ in this order:
\[
\begin{tikzpicture}[scale=0.3, baseline={(0,0)}]
 \draw (1,1) -- (0,1) node[draw, circle, anchor=east] {$a$};
 \draw (2,2) arc (90:270:1);
 \draw (2,0) -- (9,0);
 \draw (2,2) -- (9,2);
 \draw (5,0) -- (5,1.5);
 \draw (5,2.5) -- (5,6);
 \draw (4,6) rectangle (6,7);
 \draw (4.5,7) -- (4.5,8) node[anchor= south] {$\vtwist$};
 \draw (8,2) -- (8,6);
 \draw (7,6) rectangle (9,7);
 \draw (8.5,7) -- (8.5,8) node[anchor= south] {$\vtwist$};
 \draw (5.5,7) arc (180:0:1);
 \node at (10,0) {$\cdots$};
 \node at (10,2) {$\cdots$};
 \draw (11,0) -- (12,0);
 \draw (11,2) -- (12,2);
\begin{scope}[xshift=10cm]
 \draw (2,0) -- (9,0);
 \draw (2,2) -- (9,2);
 \draw (5,0) -- (5,1.5);
 \draw (5,2.5) -- (5,6);
 \draw (4,6) rectangle (6,7);
 \draw (4.5,7) -- (4.5,8) node[anchor= south] {$\vtwist$};
 \draw (8,2) -- (8,6);
 \draw (7,6) rectangle (9,7);
 \draw (8.5,7) -- (8.5,8) node[anchor= south] {$\vtwist$};
 \draw (5.5,7) arc (180:0:1);
\end{scope}
 \draw (19,-1) rectangle (20,3);
 \draw (20,1) -- (21,1) node[inner sep=3pt, draw, circle, anchor= west] {$b$};
\end{tikzpicture}\ .
\]
Here we choose one of the $2k$ boxes except the rightmost one, and apply \cite[Move~11]{Hab00C} to the box $2k+1$ times so that one obtains a graph clasper of degree $2k+1$ with a twisted leaf, which is deleted by \cite[Lemma~A.5]{MaMe13} up to $Y_{2k+2}$-equivalence.
Eventually, we obtain the clasper with a box
\[
\begin{tikzpicture}[scale=0.3, baseline={(0,0)}]
 \draw (1,1) -- (0,1) node[draw, circle, anchor=east] {$a$};
 \draw (2,2) arc (90:270:1);
 \draw (2,0) -- (9,0);
 \draw (2,2) -- (9,2);
 \draw (5,0) -- (5,1.5);
 \draw (5,2.5) -- (5,4);
 \draw (7,2) -- (7,4);
 \draw (5,4) arc (180:0:1);
 \node at (10,0) {$\cdots$};
 \node at (10,2) {$\cdots$};
 \draw (11,0) -- (12,0);
 \draw (11,2) -- (12,2);
\begin{scope}[xshift=10cm]
 \draw (2,0) -- (9,0);
 \draw (2,2) -- (9,2);
 \draw (5,0) -- (5,1.5);
 \draw (5,2.5) -- (5,4);
 \draw (7,2) -- (7,4);
 \draw (5,4) arc (180:0:1);
\end{scope}
 \draw (19,-1) rectangle (20,3);
 \draw (20,1) -- (21,1) node[inner sep=3pt, draw, circle, anchor= west] {$b$};
\end{tikzpicture}\ .
\]
Here we use \cite[Move~5 or 6]{Hab00C}, and get the right-hand side $\ss(\bu^{k}(T(b,a,b)))$ of the statement.

On the other hand, if we start the above procedure from the rightmost twisted leaf, then we get the left-hand side $\ss(\bu^{k}(T(a,b,a)))$.
\end{proof}

\def\cprime{$'$} \def\cprime{$'$} \def\cprime{$'$}

\end{document}